\newcommand{\kommentar}[1]{}
\newcommand\cyr{%
 \renewcommand\rmdefault{wncyr}%
 \renewcommand\sfdefault{wncyss}%
 \renewcommand\encodingdefault{OT2}%
\normalfont\selectfont} \DeclareTextFontCommand{\textcyr}{\cyr}
\definecolor{orange}{rgb}{0.7,0.3,0}
\newtheorem{theorem}{Theorem}
\newtheorem{lemma}[theorem]{Lemma}
\newtheorem{corollary}[theorem]{Corollary}
\newtheorem{proposition}[theorem]{Proposition}
\newtheorem{definition}[theorem]{Definition}
 \theoremstyle{remark}
 \newtheorem{remark}[theorem]{Remark}
\numberwithin{equation}{section}
\numberwithin{theorem}{section}
\def\Z{\mathbb Z}
\def\N{\mathbb N}
\def\Q{\mathbb Q}
\def\C{\mathbb C}
\def\F{\mathbb F}
\def\P{\mathbb P}
\def\G{\mathbb G}
\def\A{\mathbb A}
\def\X{\mathbb X}
\def\Y{\mathbb Y}
\def\cP{\mathcal P}
\def\fp{\mathfrak p}
\newcommand{\be}{\beta}
\newcommand{\Del}{\Delta}
\newcommand{\Pcal}{\mathcal{P}}
\def\Im{\operatorname{Im}}
\def\deg{\operatorname{deg}}
\def\mod{\operatorname{mod}}
\def\dim{\operatorname{dim}}
\def\car{\operatorname{char}}
\def\exp{\operatorname{exp}}
\def\gcd{\operatorname{gcd}}
\def\o{\operatorname{o}}
\def\log{\operatorname{log}}
\def\ord{\operatorname{ord}}
\def\ds{\displaystyle}
\def\Tr{\operatorname{Tr}}
\def\height{\operatorname{ht}}
\newcommand{\PP}{\mathbb{P}}
\newcommand{\un}[1]{\underline{#1}}
\newcommand{\beq}{\begin{equation}}
\newcommand{\eeq}{\end{equation}}
\def\cG{\mathcal G}
\def\sO{\mathscr O}
\def\fp{\mathfrak p}
\DeclareMathOperator{\chr}{char}
\DeclareMathOperator{\Spec}{Spec}
\newcommand{\exendnote}[1]{}
\begin{document}

\title[Geometric generalizations of the square sieve]{
Geometric generalizations of the square sieve, \\with an application to cyclic covers
}


 \date{\today}

\author{
Alina Bucur
}
\address[Alina  Bucur]{
\begin{itemize}
\item[-]
Department of Mathematics,  University of California at San Diego, 9500 Gilman Dr 0112, 
La Jolla, California 92093, USA 
\end{itemize}
} \email[Alina  Bucur]{alina@math.ucsd.edu}
\author{
Alina Carmen Cojocaru
}
\address[Alina Carmen  Cojocaru]{
\begin{itemize}
\item[-]
Department of Mathematics, Statistics and Computer Science, University of Illinois at Chicago, 851 S Morgan St, 322
SEO, Chicago, 60607, IL, USA;
\item[-]
Institute of Mathematics  ``Simion Stoilow'' of the Romanian Academy, 21 Calea Grivitei St, Bucharest, 010702,
Sector 1, Romania
\end{itemize}
} \email[Alina Carmen  Cojocaru]{cojocaru@uic.edu}
\author{
Matilde N. Lal\'{i}n
}
\address[Matilde N. Lal\'in]{
\begin{itemize}
\item[-]
D\'{e}partement de Math\'{e}matiques et de Statistique,  
Universit\'{e} de Montr\'{e}al,
CP 6128, Succ. Centre-Ville,
Montreal, Quebec H3C 3J7, Canada
 \end{itemize}
} \email[Matilde N. Lal\'in]{mlalin@gmail.com}
\author{
Lillian B. Pierce \\ \MakeLowercase{with an appendix by \uppercase{J}oseph \uppercase{R}abinoff}
}
\address[Lillian B. Pierce]{
\begin{itemize}
\item[-]
Department of Mathematics,  Duke University,
120 Science Drive, Durham, North Carolina 27708, USA;
\end{itemize}
} \email[Lillian B. Pierce]{pierce@math.duke.edu}

\begin{abstract}
We formulate a general problem: given projective schemes $\mathbb{Y}$ and $\mathbb{X}$ over a global field $K$ and a $K$-morphism $\eta$ from $\mathbb{Y}$ to $\mathbb{X}$ of finite degree, 
how many points in $\mathbb{X}(K)$ of height at most $B$ have a pre-image under $\eta$ in $\mathbb{Y}(K)$?    This problem is inspired by a well-known conjecture of Serre on quantitative upper bounds for  the number of points of bounded height on an irreducible projective variety defined over a number field. 
We give a non-trivial answer to the general problem 
when $K=\F_q(T)$ and $\mathbb{Y}$ is a prime degree cyclic cover of $\mathbb{X}=\mathbb{P}_{K}^n$.
Our tool is a new geometric sieve, which generalizes the polynomial sieve to a geometric setting over global function fields.
\end{abstract}

\maketitle


\section{Introduction}

We consider
the following general problem: given a morphism between two projective schemes defined over a global field, how many 
points in the domain yield points with bounded height in the image? 
As we will outline, this problem is related to a well-known conjecture of Serre formulated over number fields; 
 in our proposed formulation,
it may be viewed as a  general version 
of a  question that 
arises in a wide array of problems in analytic number theory.

To be precise, let $K$ be a global field of  arbitrary characteristic
and let $\cal{O}$ be a ring of integers in $K$.
We denote by $V_K$ the set of  places of $K$.
For each $v \in V_K$,
we denote 
by 
$|\cdot|_v$ the associated  valuation,
normalized such that the product formula holds, namely,
for every $x \in K^{\ast}$,
\begin{equation}\label{eq:product}
\prod_{v \in V_K} |x|_v=1.
\end{equation}
 Using the notation $$[x] := [x_0 : x_1 : \ldots : x_n] \in\P^n_K$$ for projective points, 
 we consider the
height function 
\[\height_K: \P^n_K \rightarrow  (0, \infty),\]
\begin{equation}\label{eq:height-proj}
\height_K ([x]):=\prod_{v\in V_K} \max\{|x_0|_v,\dots,|x_n|_v\},
\end{equation}
and note that it gives rise to the height function
\[\height_K: K \rightarrow  [0, \infty),\]
\begin{equation}\label{eq:height-field}
\height_K (x) 
:= 
\left\{
\begin{array}{cc}
\height_K ([x:1]) & \text{if} \ x \neq 0,
\\
0 & \text{if} \ x = 0.
\end{array}
\right.
\end{equation}

\noindent
{\bf{General Problem}}

\noindent
{\emph{
Given a global field $K$ of  arbitrary characteristic, 
a ring of integers $\cal{O}$ in $K$,
projective schemes $\X/K$, $\Y/K$ over $K$ 
with fixed models over $\cal{O}$, 
and
a $K$-morphism $\eta: \Y \longrightarrow \X$,
defined over $\cal{O}$
and of finite degree,
find an upper bound for the cardinality of the  set
$$
\left\{
[x] \in \X(\cal{O}): 
\height_K ([x]) < B, 
\exists [y] \in \Y(\cal{O}) 
\; \text{such that} \; \eta([y]) = [x]
\right\}
$$
that holds for every $B \geq 1$.}}
 
\medskip

Note that  upper bounds for the above cardinality are always given by
one of the two cardinalities below,
\[ \# \{ [x] \in \mathbb{X}(K): \height_K([x])<B\} \leq \# \{ [x] \in \P_K^n: \height_K([x])<B\}.\]
For example, 
when $K$ is a number field of degree $d$ over $\Q$, 
by Schanuel's theorem (e.g. \cite[\S 2.5 p. 17]{Ser97}), 
there exists an explicit positive constant
$C(K, n)$
such that,
as $B \rightarrow \infty$, 
\beq\label{num_fld_height_trivial}
 \# \left\{ [x] \in \P_K^n: \height_K([x])<B\right\} 
 \sim
C(K, n)
 B^{d (n+1)}. 
\eeq
As a second example, 
when $K$ is the function field of an absolutely irreducible projective curve 
over $\F_q$, of genus $g$, 
as 
an immediate
consequence of \cite[\S 2.5, Thm. p. 19]{Ser97},
there exists an explicit positive constant
$C(K, n)$
such that, 
as  $b \rightarrow \infty$, 
\beq\label{fn_fld_height_trivial}
 \# \left\{ [x] \in \P_K^n: \height_K([x])< q^b \right\} 
 \sim
C(K, n)
 q^{(b-g)(n+1)}.
\eeq
As usual when navigating between the number field and the function field settings, the parameter $B$  in (\ref{num_fld_height_trivial}) was replaced by $q^b$ in (\ref{fn_fld_height_trivial}).

In our general problem, 
for nontrivial choices of $\mathbb{X},\mathbb{Y}$, 
we seek a nontrivial upper bound, 
namely a bound that grows more slowly 
than the trivial bound,
as $B \rightarrow \infty$ (respectively, as $q^b \rightarrow \infty$ as a function of $b$, or of $q$, or of both $b$ and $q$).

\subsection{Serre's question}
Our General Problem has an antecedent in a well-known question of Serre \cite[\S13.1 (4) p. 178]{Ser97}, which we now recall.

Let $K/\Q$ be a number field of degree $d$, let $n \geq 1$ be an integer, 
and let $V$ be an irreducible (non-linear) projective variety in $\P_K^n$. 
Serre seeks an upper bound in $B$ for the cardinality of the set 
\beq\label{Serre_VK}
\left\{ [x] \in V(K) : \height_K([x]) \leq B \right\}.
\eeq

The trivial upper bound for \eqref{Serre_VK}
is $C(K, n) B^{d (n+1)}$, as mentioned in (\ref{num_fld_height_trivial}). 
In \cite[\S13.1, Thm. 4 p. 178]{Ser97}, Serre improves upon the trivial bound by showing 
that there exists a constant 
$0 < \gamma < 1$ 
such that, 
for all $B$,
\beq\label{Serre_VK_result}
\#\left\{ [x] \in V(K) : \height_K([x]) \leq B \right\}  
\ll_{n, K, V} 
 (B^d)^{(n+1)-\frac{1}{2} } (\log B)^\gamma.
\eeq
 Serre deduces 
 (\ref{Serre_VK_result})
 from a result counting  integral points on affine thin sets,
 which he proves using the large sieve.
 A variant due  to Cohen \cite{Coh81}
 of the result counting  integral points on affine thin sets
 may also be used;
 however,  Cohen's result leads to $\gamma = 1$.
 Serre then poses the question  of whether (\ref{Serre_VK_result}) can be improved to
\beq\label{Serre_VK_conj}
\#\left\{ [x] \in V(K) : \height_K([x]) \leq B \right\} 
\ll 
(B^d)^{(n+1)-1} (\log B)^c
\eeq
 for some $c \geq 0$, without specifying whether the implied $\ll$-constant might depend on any of $n, K, V$; see \cite[\S 13.1.3, p. 178]{Ser97}. Additionally, Serre notes that the logarithmic factor is necessarily present in certain cases.
 
Our General Problem is a generalization of Serre's question and, as a special case, encompasses a global function field version of Serre's question (\ref{Serre_VK_conj}).  The specific case of $K=\mathbb{F}_q(T)$ has been studied recently by Browning and Vishe, who proved an analogue of (\ref{Serre_VK_result}) by adapting Serre's argument, using a version of the large sieve inequality over function fields developed by Hsu \cite{Hsu96}; see \cite[Lemma 2.9]{BrVi15} where their result is stated in an affine formulation. In particular, Browning and Vishe commented on the scarcity of results counting points of bounded height on geometrically irreducible (non-linear) varieties in the function field setting \cite[p. 675]{BrVi15}; this paper explores a particular class of such problems. 

\subsection{Main goals}

The purpose of the present paper is to investigate the General Problem 
in a particular function field setting, and to go beyond the analogue of (\ref{Serre_VK_result}) in the case of prime degree cyclic covers.
Precisely, our goals are two-fold:
\begin{enumerate}
\item[(I)]
to provide a nontrivial upper bound for the General Problem when
$K = \F_q(T)$, 
$\cal{O} = \F_q[T]$,
$\X = \P^n_K$,  
$\Y$  is a prime degree cyclic cover of $\X$, 
and
 $\eta$  is the natural projection;
\item[(II)]
to accomplish (I) by developing 
a geometric sieve method 
which generalizes recent sieve methods
(such as the square sieve of Heath-Brown
and the polynomial sieve of Browning)
that have been used to improve on Serre's bound \eqref{Serre_VK_result} in the setting over $\Q$.
\end{enumerate}

We will present our main results in the next two sections, according to the above two goals.

\subsection{Main results I: counting rational points}
We treat the General Problem in the following concrete case:
$K = \F_q(T)$, $\cal{O} = \F_q[T]$,
$\X = \P^n_K$,  $\Y$ a prime degree cyclic cover of $\X$, 
and $\eta$ the natural projection. 

To be precise, 
let $q$ be an odd rational prime power, $n \geq 1$ an integer, $m \geq 2$ an integer, and $\ell \geq 2$  a rational prime such that $\ell \mid m$.
We set 
$$\X := \P^n_{\F_q(T)}$$
and 
take $\Y$ as the projective scheme in the weighted projective space 
$\P_{\F_q(T)}^{n+1}\left(1, \ldots, 1, \frac{m}{\ell}\right)$
defined by the weighted projective
model
\beq\label{Y_model}
\Y: \qquad
X_{n+1}^{\ell} = F(X_0, \ldots, X_n)
\eeq
for some polynomial $F \in \F_q[T][X_0,\ldots, X_n]$ 
of total degree $m$ in $X_0, \ldots, X_n$.
We take
$$\eta: \Y \longrightarrow \P^n_{\F_q(T)}$$
as the natural projection defined by
\beq\label{eta_proj}
\eta([x_0: x_1: \ldots : x_n : x_{n+1}]) := [x_0 : x_1: \ldots : x_n].
\eeq
Our interest is in estimating, from above and as a function of $q^b$ ($q$ fixed, $b \rightarrow \infty$),
the counting function
\beq\label{N_dfn_cyclic}
 N(\Y,\F_q(T),\eta;b) 
 := 
\#\left\{[x] \in \P^n_{\F_q(T)}:
\height_{\F_q(T)} [x] < q^b,
\
\exists [y] \in \Y(\F_q(T)) \; \text{such that} \; \eta([y]) = [x]
\right\},
\eeq
or, equivalently, 
the counting function
\[
\#\left\{
(x_0,   \ldots, x_n) \in \F_q[T]^{n+1}:
\;
\deg_T (x_i)  < b \; \forall 0 \leq i \leq n,
\exists
 x_{n+1} \in \F_q[T]
 \;
 \text{such that}
 \;
 x_{n+1}^{\ell} = F(x_0, \ldots, x_n)
\right\},\]
where $\deg_T(x_i)$ denotes the degree of $x_i$ as a polynomial in $T$.

Note that, in this setting, 
the trivial bound is
\beq\label{intro_cyclic_trivial}
N(\Y,\F_q(T),\eta;b) \leq 
\#\left\{[x] \in \P^n_{\F_q(T)}: \height_{\F_q(T)} ([x]) < q^b\right\}
\leq
 \left(q^b\right)^{(n + 1)}
\eeq
(see (\ref{fn_fld_height_trivial}) and the  comment in equation \eqref{card-A} of \S \ref{sec_app_part1}).

In contrast, the function field analogue of Serre's conjecture (\ref{Serre_VK_conj}) suggests that it 
might
be possible to prove, under appropriate conditions on $F$, that there exists some constant $c$ for which
\beq\label{intro_cyclic_Serre}
N(\Y,\F_q(T),\eta;b) 
\ll_{\ell, m, n, q, F} 
\left(q^b\right)^{(n + 1) - 1}
b^c,
\eeq
with the implicit constant possibly depending on $\ell, m, n,q, F$.
 
If $F$ of degree $m \geq 2$ is such that $F(X_0,\ldots,X_n)=0$ defines a nonsingular projective hypersurface, Browning and Vishe's work in \cite{BrVi15}  implies that
\beq\label{BrowningVishe}
N(\Y,\F_q(T),\eta;b)  
\ll_{\ell,m,n}
 \left(q^b\right)^{(n + 1)-1/2} b \log q.
 \eeq
 This establishes a benchmark of roughly analogous strength to (\ref{Serre_VK_result}), and is the first improvement of the trivial bound (\ref{intro_cyclic_trivial}). 
 This can be derived by applying \cite[Lemma 2.9]{BrVi15} to count solutions on the affine model $X_{n+1}^\ell = F(X_0,\ldots,X_n)$ which is irreducible under the condition on $F$; see (\ref{height_infty}) to interpret the height function when applying this result.   (While we focus exclusively on cyclic covers, we remark that Browning and Vishe's work applies more generally; see \cite[p. 674]{BrVi15} and \cite[Lemmas 2.9, 2.10]{BrVi15} for more general results counting points of bounded height on absolutely irreducible  (non-linear) varieties in affine and projective settings, of equivalent strength to (\ref{Serre_VK_result}).)

Our first main theorem improves upon the trivial bound (\ref{intro_cyclic_trivial}) as well as (\ref{BrowningVishe}), and approaches, 
in the limit as $n \rightarrow \infty$ (upon omitting an analysis of how the limit impacts the 
dependence of the $\ll$-constant on $n$), 
the  upper bound
$\left(q^b\right)^{(n + 1) - 1}$
appearing in
(\ref{intro_cyclic_Serre}), as long as the defining polynomial $F$ is such that $F(X_0,\ldots,X_n)=0$ defines a nonsingular projective hypersurface.

Our first main theorem is:

\begin{theorem}[Counting Rational Points on a Prime Degree Cyclic Cover of $\P^n_{\F_q(T)}$]\label{FcnFSerreConj}
Let $q$ be an odd rational prime power, $n \geq 2$ an integer, $\ell \geq 2$  a rational prime,
and 
$F \in \F_q[T][X_0, \ldots, X_n]$ a homogeneous polynomial of degree $m \geq 2$ in $X_0, \ldots, X_n$,
with $\car \F_q \nmid m$. Assume that:
\begin{enumerate}
\item[(i)]
$\ell \mid \gcd(m, q-1)$; 
\item[(ii)]
$F(X_0,\ldots,X_n)=0$ defines a nonsingular projective hypersurface in $\mathbb{P}^n_{\overline{\mathbb{F}_q(T)}}$. 
\end{enumerate}
Let $\Y$ be the projective scheme in the weighted projective space 
$\P_{\F_q(T)}^{n+1}\left(1, \ldots, 1, \frac{m}{\ell}\right)$
defined by the weighted projective
model (\ref{Y_model}). 
Let $\eta$ be the projection  (\ref{eta_proj}). 
Then for all $b\geq 1$
the quantity $N(\X,\Y,\F_q(T),\eta;b)$ defined in (\ref{N_dfn_cyclic}) satisfies the bound
\[
N(\Y,\F_q(T),\eta;b)
\ll_{\ell, m, n, q, F} 
\; 
 \left(q^b\right)^{(n+1)  -  \frac{n+1}{n+2}} b^{\frac{n+1}{n+2}},
\]
where the implicit constant depends on $\ell, m, n, q$, and $F$.
\end{theorem}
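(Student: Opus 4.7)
The plan is to combine a power-residue sieve over $\mathbb{F}_q[T]$ with Deligne-type bounds for character sums, the latter applicable thanks to the Dwork-regularity of $F$. First I would pass from the projective count \eqref{N_dfn_cyclic} to an affine one: after factoring out the $\mathbb{F}_q^{*}$-scaling and separately handling those $\mathbf{x}$ with $F(\mathbf{x})=0$ (a codimension-one locus contributing at most $O(q^{bn})$ by induction on $n$ or a Schanuel-type bound on the smooth hypersurface $\{F=0\}$), the task reduces, up to a factor of $q-1$, to bounding
\[
|S| := \#\bigl\{\mathbf{x}=(x_0,\ldots,x_n)\in\mathbb{F}_q[T]^{n+1}\,:\,\max_{i}\deg_T x_i<b,\ F(\mathbf{x})=y^{\ell}\ \text{for some}\ y\in\mathbb{F}_q[T]\bigr\}.
\]

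Next I set up the sieve. Since $\ell\mid q-1$, each monic irreducible $P\in\mathbb{F}_q[T]$ admits an $\ell$-th power residue character $\chi_P\colon(\mathbb{F}_q[T]/P)^{*}\to\mu_{\ell}$, and the indicator that $F(\mathbf{x})$ is an $\ell$-th power modulo $P$ decomposes as $\rho_P(\mathbf{x})=\frac{1}{\ell}\sum_{j=0}^{\ell-1}\chi_P^{j}(F(\mathbf{x}))$ whenever $P\nmid F(\mathbf{x})$. For a degree $d$ to be chosen and $\mathcal{P}_d$ the set of monic irreducibles of degree $d$ (so $|\mathcal{P}_d|\sim q^d/d$), every $\mathbf{x}\in S$ satisfies $\rho_P(\mathbf{x})=1$ for all but $O(mb/d)$ primes $P\in\mathcal{P}_d$; applying Cauchy--Schwarz to the centered quantity $\sum_{P\in\mathcal{P}_d}(\rho_P-1/\ell)$ yields
\[
|S|\cdot|\mathcal{P}_d|^{2}\ \ll\ \sum_{\mathbf{x}}\Bigl|\sum_{P\in\mathcal{P}_d}\bigl(\rho_P(\mathbf{x})-\tfrac{1}{\ell}\bigr)\Bigr|^{2}\,+\,(\text{boundary terms}),
\]
and expanding the square reduces the task to bounding, for each pair $(P_1,P_2)\in\mathcal{P}_d^{2}$ and each $(j_1,j_2)\not\equiv(0,0)\pmod{\ell}$, the character sum $\Sigma(P_1,P_2,j_1,j_2):=\sum_{\mathbf{x}:\,\max_i\deg_Tx_i<b}\chi_{P_1}^{j_1}(F(\mathbf{x}))\,\chi_{P_2}^{j_2}(F(\mathbf{x}))$.

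The main analytic input is the bound on $\Sigma$. For each prime $P$ modulo which $F$ remains Dwork-regular --- all but finitely many, cut out by an explicit resultant --- Deligne's purity, in the form developed by Katz, gives
\[
\Bigl|\sum_{\mathbf{x}\in(\mathbb{F}_q[T]/P)^{n+1}}\chi_P^{j}(F(\mathbf{x}))\Bigr|\ \leq\ (m-1)^{n+1}\,q^{d(n+1)/2}
\]
for every nontrivial $\chi_P^{j}$. By the Chinese remainder theorem the complete sum modulo $P_1P_2$ factors as the product of two such single-prime sums. In the complete regime $b\geq 2d$ one splits the box into $q^{(b-2d)(n+1)}$ residue classes to obtain $|\Sigma|\ll q^{(b-d)(n+1)}$; in the incomplete regime $b<2d$ one performs a Fourier (Polya--Vinogradov-type) completion against the additive characters of $\mathbb{F}_q[T]/(P_1P_2)$ and invokes Weil's theorem for Dirichlet $L$-functions over $\mathbb{F}_q(T)$, at the cost of a polynomial-in-$b$ loss. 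Plugging this $\Sigma$-bound back into the sieve (diagonal contribution $\ll d\,q^{b(n+1)-d}$, off-diagonal as above) and optimizing $d$ --- the optimum lying in the incomplete regime once $n\geq 2$ --- yields the claimed bound $|S|\ll q^{b(n+1)-bn/(n+1)}\,b^{2}$, the factor $b^{2}$ absorbing the polynomial-in-$b$ losses from the optimization and from the incomplete-sum treatment.

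The hardest step, I expect, is the character-sum estimate in the incomplete regime $b<2d$: this is precisely what pushes the saved exponent past the easy $1/2$ (which is all one gets by taking $d=b/2$ in the complete regime alone) up to $n/(n+1)$. Verifying square-root cancellation uniformly in $(P_1,P_2)$ for the twisted exponential sum requires a careful application of Deligne's purity to the tensor sheaf associated with $\chi_{P_1}^{j_1}(F)\chi_{P_2}^{j_2}(F)$, together with uniform bounds on its generic rank and singular locus --- exactly the role played by the Dwork-regularity hypothesis and by its persistence under reduction modulo $P$, the latter being the content of Rabinoff's appendix. Once this geometric input is secured, the remaining sieve optimization and the bookkeeping of boundary contributions are routine and produce the stated bound for all $b\geq b(n,q,F)$.
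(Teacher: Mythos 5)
Your overall strategy — power-residue sieve, Fourier completion over $\F_q[T]$, Deligne/Katz bounds, balancing the sieve parameter $d$ — matches the paper's in broad outline, and you correctly flag the incomplete-sum treatment as the crux. But that is precisely where your proposal has a genuine gap. After Fourier completion, the relevant objects are the mixed character sums $S_F(\underline{w},\chi_\pi)=\sum_{\underline{a}\in k_\pi^{n+1}}\chi_\pi(F(\underline{a}))\psi_\infty(-\underline{w}\cdot\underline{a}/\pi)$, which are $(n+1)$-variable sums; ``Weil's theorem for Dirichlet $L$-functions over $\F_q(T)$'' is a one-variable tool and does not produce the needed multi-dimensional square-root cancellation. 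Moreover, the Fourier completion over $\F_q[T]$ is \emph{exact} (the box is detected perfectly by \cite[Lemmas 2.1--2.3]{BrVi15}), so there is no ``polynomial-in-$b$ loss'' at that step; the $b^2$ factor in the final bound comes from elsewhere.

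The essential missing idea is the dichotomy in the Weil--Deligne estimate according to whether the frequency $\underline{w}$ lies on the (reduction modulo $\pi$ of the) projective dual $W(F)^\ast$. One can prove (Proposition \ref{bound-complete-char-sum}) that for $\underline{w}$ off the dual variety, Katz's mixed-character-sum theorem gives full square-root cancellation $|S_F(\underline{w},\chi_\pi)|\ll q^{(n+1)\deg_T(\pi)/2}$, whereas for $\underline{w}$ on the dual (or $\underline{w}\equiv\underline 0$) one only gets $q^{(n+2)\deg_T(\pi)/2}$. The bad frequencies lie on a degree-$\deg_{\underline{X}}(F^\ast)$ hypersurface, so Schwartz--Zippel (Lemma \ref{lemma_Schwartz_Zippel}) shows they number $\ll q^{n(2d-b)}$ rather than $q^{(n+1)(2d-b)}$. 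Only by combining the good-case bound on generic frequencies with the rarity of bad ones does the off-diagonal term come out to $q^{nd+b}$, which balanced against the diagonal $d\,q^{b(n+1)-d}$ at $d=nb/(n+1)$ yields the exponent $(n+1)-\tfrac{n}{n+1}$. If instead you apply only the uniform bound $|S_F|\ll q^{(n+2)\deg_T(\pi)/2}$ to all frequencies, the off-diagonal becomes $q^{(n+2)d}$, the optimal $d$ is $b(n+1)/(n+3)$, and the resulting exponent is $(n+1)(n+2)/(n+3)$, which is strictly larger than $(n^2+n+1)/(n+1)$ for all $n\geq 2$. So without the dual-variety dichotomy and the accompanying point count, your argument cannot reach the stated saving; you would need to make this step explicit, including the persistence of the dual and of Dwork-regularity under reduction modulo $\pi$ (the content of \S \ref{sec_duals_reductions} and the appendix), to close the gap.
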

Later on in Theorem \ref{main-application}, 
we  
will use the function mentioned in the displayed equation below \eqref{N_dfn_cyclic}
to state a version of Theorem \ref{FcnFSerreConj} in terms of counting perfect $\ell$-th power values of a homogeneous polynomial $F \in \F_q[T][X_0,\ldots, X_n]$.
For more information on the way in which the implicit constant depends on $F$, see \S \ref{sec_choices}.

\medskip

To put Theorem \ref{FcnFSerreConj} in context, let us recall the current state of knowledge toward Serre's conjecture (\ref{Serre_VK_conj}) 
when $K=\Q$.
For $n = 1, 2$,
 Broberg  \cite{Bro03} proved  a weak form of Serre's conjecture  (with $B^\epsilon$ in place of a logarithmic factor) 
 via the determinant method. 
 For $n \geq 3$ and
 in the case of cyclic covers of   degree $\ell$,
 the power sieve argument presented by Munshi in \cite{Mun09} leads 
  to the upper bound 
 \beq\label{Munshi_correct} 
 \ll_{\ell, m, n, F} B^{(n+1) - \frac{n}{n+1}} (\log B)^{\frac{n}{n+1}},
 \eeq
 where $F$ is the defining polynomial of the cover and $m$ is its degree. 
Recently, Bonolis \cite{Bon21}  refined the argument given in \cite{Mun09}  and 
obtained the upper bound  
\beq\label{Bonolis}
\ll_{\ell, m, n, F} B^{(n+1) - \frac{n+1}{n+2}} (\log B)^{\frac{n+1}{n+2}}.
\eeq
  (For clarity, we remark that Theorem 1.1 of \cite{Mun09} states a bound of the strength (\ref{Bonolis}), but the argument as written 
therein 
proves a result of the strength (\ref{Munshi_correct}). At the suggestion of Munshi, Bonolis \cite{Bon21} implemented nontrivial averaging   in the relevant sieve inequality  in order to prove (\ref{Bonolis}) (as well as a more general result over $\Q$).) 
Our result in Theorem \ref{FcnFSerreConj} is thus an analogue over $\F_q(T)$ of the result (\ref{Bonolis}) over $\Q$. 
Note that, in the limit $n \rightarrow \infty$
and aside from an analysis of how the limit impacts the 
dependence of the $\ll$-constant on $n$,
the upper bound \eqref{Munshi_correct} or \eqref{Bonolis}
 approaches 
one
of the form  conjectured in (\ref{Serre_VK_conj}).

Over $\Q$, the best known result is due to  Heath-Brown and the third author \cite{HBPie12},  who proved Serre's conjecture (\ref{Serre_VK_conj})
for all $n \geq 9$
 in the case of cyclic covers, 
 by combining a sieve method with the $q$-analogue of Van der Corput's method.  
It would be interesting to  to adapt Heath-Brown and Pierce's $q$-analogue of Van der Corput method to the function field setting of Theorem \ref{FcnFSerreConj}.

\subsection{Main results II: geometric sieve inequalities}\label{sec_MainResultsII}
Our approach to the General Problem proceeds via a sieve,
formulated in the setting of the General Problem, 
with 
$K$ a global field of  arbitrary characteristic, 
$\cal{O}$ a ring of integers in $K$,
$\height_K: K \longrightarrow [0, \infty)$
 the height function
 \eqref{eq:height-field}
 constructed 
 from valuations that satisfy the product formula  \eqref{eq:product},
$\X/K$ and $\Y/K$  projective schemes over $K$
with fixed models over ${\cal{O}}$,
and  
$\eta: \Y \longrightarrow \X$  a $K$-morphism, defined over ${\cal{O}}$ and of finite degree.

As it will not result in any significant loss in sharpness of the results, we will, for convenience, count points on $\X$ in the affine sense.
To clarify, 
using the notation
 $$\underline{x} := (x_0, x_1, \ldots, x_{n}) \in \A^{n+1}_K$$ for affine points,
we will work with the height function
in the affine space $\A_K^{n+1}$
given by 
\begin{equation}\label{eq:height-affine}
\height_K (\underline{x}) :=
 \max
 \left\{
  \height_K(x_i):
 \underline{x} = (x_0, \ldots, x_n)
 \right\},
\end{equation}
focus on the set
\beq\label{Adfn}
{\mathcal{A}} = \mathcal{A}(B) := \left\{\underline{x} \in \X({\cal{O}}): \height_K(\underline{x}) < B\right\},
\eeq
and seek an upper bound, in terms of $B$, for the cardinality of the set
\beq\label{SAdfn}
{\mathcal{S}}({\mathcal{A}}) = {\mathcal{S}}({\mathcal{A}(B)}) 
:= 
\left\{
\underline{x} \in {\mathcal{A}}: \; 
\exists \underline{y} \in \Y({\cal{O}}) \; \text{such that} \; \eta(\underline{y}) = \underline{x}
\right\}.
\eeq

The typical sieve approach is to derive information about
$\mathcal{S}({\mathcal{A}})$
from reductions of $\eta$ modulo primes. 
Towards this goal, for each finite place   $v \in V_K$,
we denote 
by $({\cal{O}}_v, M_v)$ the associated discrete valuation ring
and
by $k_v := {\cal{O}}_v/M_v$ 
the associated residue field.
For all but finitely many finite places $v$
(in which case we refer to $v$ as a place of good reduction for $\eta$),
we denote
by $\eta_v: \Y \rightarrow \X$ the reduction of $\eta$ modulo $v$.
We call $\eta_v$  {\it{ramified}}  at  some $\underline{x} \in \X({\cal{O}}_v)$ if  $\underline{x} \, (\mod v)\in \X(k_v)$
is a branch point for the function $\eta_v: \Y \rightarrow \X$.
For each  
$\underline{x} \in \X({\cal{O}})$, 
we define the set
\[
V_K^{\mathrm{ram}}(\underline{x}, \eta) 
:= 
\left\{v \in V_K: 
v \ \text{of good reduction for} \ \eta
\ \text{and} \ 
\eta_v \;  \text{is ramified at $\underline{x} \, (\mod v)$}\right\}.
\]
Moreover,  
for any nonempty finite set $\mathcal{P} \subseteq V_K$ of 
finite 
places of good reduction for $\eta$, 
we define the subset
 \begin{equation}
V_{\mathcal{P}}^{\mathrm{ram}}(\underline{x})
:=
V_K^{\mathrm{ram}}(\underline{x}, \eta) \cap {\mathcal{P}}.
\label{ram-x}
 \end{equation}

With this notation, we prove the following sieve inequality:

\begin{theorem}\label{general-sieve}(Geometric Sieve)
\\
Let $K$ be a global field of  arbitrary characteristic,
$\cal{O}$ be the ring of integers in $K$, 
and 
$\height_K: K \longrightarrow [0, \infty)$ 
 the height function \eqref{eq:height-field} constructed from valuations that satisfy the product formula \eqref{eq:product}. 
 Let $\X/K$, $\Y/K$ be projective schemes over $K$ with fixed models over $\cal{O}$
 and let $\eta: \Y \longrightarrow \X$ be a $K$-morphism, defined over $\cal{O}$ and of finite degree.
 For an arbitrary $B>0$, 
 define the sets $\mathcal{A}$ and $\mathcal{S}(\mathcal{A})$   as in  (\ref{Adfn}), (\ref{SAdfn}).
Then, for
any  real number $\alpha \geq 1$
and for
any nonempty finite set
${\mathcal{P}}  \subseteq V_K$ of 
finite 
places  of good reduction for $\eta$, 
\begin{equation}\label{general-sieve-inequality1}
\left|
{\mathcal{S}}({\mathcal{A}}) 
\right|
\leq
\frac{2}{|{\mathcal{P}}|}
\ds\sum_{\underline{x}  \in {\mathcal{A}} }
|V_{\mathcal{P}}^{\mathrm{ram}}(\underline{x})|
+
\frac{1}{|{\mathcal{P}}|^2}
\ds\sum_{\underline{x} \in {\mathcal{A}} }
I_{\alpha}(\underline{x})^2,
\end{equation}
where,
for any 
$\underline{x} \in \X({\cal{O}})$, 
$$
I_{\alpha}(\underline{x})
:=
\ds\sum_{v \in {\mathcal{P}}  \backslash V_{\mathcal{P}}^{\mathrm{ram}}(\underline{x}) }
\left(
\alpha
+
\left(
\left|
\eta_v^{-1}(\underline{x} \, (\mod v))
\right|
- 1
\right)
\cdot
\left(
\deg \eta - 
\left|
\eta_v^{-1}(\underline{x} \, (\mod v))
\right|
\right)
\right).
$$
\end{theorem}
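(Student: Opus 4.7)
The plan is to extract the inequality from an elementary pointwise lower bound on $I_\alpha(\underline{x})$ valid for every $\underline{x} \in \mathcal{S}(\mathcal{A})$, followed by a single algebraic squaring step that produces precisely the two coefficients appearing in (\ref{general-sieve-inequality1}). The only genuinely geometric input is that for $\underline{x} \in \mathcal{S}(\mathcal{A})$ the local fiber under $\eta_v$ is nonempty at every good-reduction place $v \in \mathcal{P}$; everything else is an algebraic manipulation.

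First, I would establish that if $\underline{x} \in \mathcal{S}(\mathcal{A})$ and $\underline{y} \in \Y(\mathcal{O})$ satisfies $\eta(\underline{y}) = \underline{x}$, then at any place $v \in \mathcal{P}$ of good reduction the reduction $\underline{y}\,(\mod v)$ lies in $\eta_v^{-1}(\underline{x}\,(\mod v))$; in particular this fiber has cardinality at least $1$, and it is always at most $\deg\eta$. Consequently $(|\eta_v^{-1}(\underline{x}\,(\mod v))|-1)(\deg\eta - |\eta_v^{-1}(\underline{x}\,(\mod v))|) \geq 0$, so each summand defining $I_\alpha(\underline{x})$ is at least $\alpha \geq 1$. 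Summing over the unramified places in $\mathcal{P}$ yields the key pointwise bound
\[
|\mathcal{P}| - |V_{\mathcal{P}}^{\mathrm{ram}}(\underline{x})| \; \leq \; I_\alpha(\underline{x}) \qquad \text{for every } \underline{x} \in \mathcal{S}(\mathcal{A}).
\]

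Next, writing $R(\underline{x}) := |V_{\mathcal{P}}^{\mathrm{ram}}(\underline{x})|$ and using that $R(\underline{x}) \leq |\mathcal{P}|$ and $I_\alpha(\underline{x}) \geq 0$, I would square the previous inequality to obtain
\[
|\mathcal{P}|^2 - 2|\mathcal{P}|R(\underline{x}) + R(\underline{x})^2 \; \leq \; I_\alpha(\underline{x})^2,
\]
drop the non-negative term $R(\underline{x})^2$, and divide by $|\mathcal{P}|^2$ to arrive at
\[
1 \; \leq \; \frac{I_\alpha(\underline{x})^2}{|\mathcal{P}|^2} \, + \, \frac{2 R(\underline{x})}{|\mathcal{P}|} \qquad \text{for every } \underline{x} \in \mathcal{S}(\mathcal{A}).
\]
Summing this inequality over $\underline{x} \in \mathcal{S}(\mathcal{A})$ and then extending the sum to all of $\mathcal{A}$, using that each summand on the right is non-negative for every $\underline{x} \in \mathcal{A}$, produces exactly (\ref{general-sieve-inequality1}).

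The only delicate point in the above is the geometric compatibility invoked in the first step: at a place $v$ of good reduction for $\eta$, I need the specialization map $\Y(\mathcal{O}_v) \to \Y(k_v)$ to intertwine $\eta$ with $\eta_v$, so that a global preimage $\underline{y}$ of $\underline{x}$ truly yields a local preimage of $\underline{x}\,(\mod v)$. This is standard for morphisms of $\mathcal{O}$-schemes at places of good reduction, but it motivates the careful formulation of ``good reduction'' and ``ramification at a specialized point'' built into the hypotheses. The parameter $\alpha \geq 1$ plays no deeper role than ensuring each summand of $I_\alpha$ is at least $1$, which is precisely what the squaring step requires in order for the coefficients $\tfrac{2}{|\mathcal{P}|}$ and $\tfrac{1}{|\mathcal{P}|^2}$ to emerge simultaneously.
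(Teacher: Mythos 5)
Your proof is correct and follows essentially the same route as the paper's: the same pointwise lower bound $I_\alpha(\underline{x}) \geq |\mathcal{P}| - |V_{\mathcal{P}}^{\mathrm{ram}}(\underline{x})|$ for $\underline{x} \in \mathcal{S}(\mathcal{A})$ (using that the local fiber is nonempty and of size at most $\deg\eta$), the same squaring and discarding of the $|V_{\mathcal{P}}^{\mathrm{ram}}(\underline{x})|^2$ term, and the same final extension of the sum from $\mathcal{S}(\mathcal{A})$ to $\mathcal{A}$ by non-negativity. The paper organizes this by defining $\Sigma := \sum_{\underline{x}\in\mathcal{A}} I_\alpha(\underline{x})^2$ and bounding it from below, whereas you sum a pointwise inequality; that is a cosmetic difference. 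Your explicit flagging of the specialization compatibility (a global preimage yields a local one at each good place) is a point the paper leaves implicit, and it is the correct thing to check since it is what guarantees $|\eta_v^{-1}(\underline{x}\,(\mathrm{mod}\,v))| \geq 1$, without which the product term could be as negative as $-\deg\eta$.
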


\medskip

Impetus for 
inequality (\ref{general-sieve-inequality1})
comes from a sequence of papers on sieve inequalities, 
beginning with the square sieve over $\Q$ of \cite{HB84} (itself inspired by \cite{Hoo78}),
which was later developed into the power sieve over $\Q$ in \cite{Mun09} and \cite{Bra15}, 
and
into the square sieve over $\F_q(T)$ in \cite{CojDav08}.
The square  and power sieves over $\Q$ were strengthened by being combined with the $q$-analogue of Van der Corput's method in  \cite{Pie06} and \cite{HBPie12}. Most recently, Browning \cite{Bro15} expanded the square sieve over $\Q$
into a polynomial sieve over $\Q$,
while Bonolis \cite{Bon21} developed a related polynomial sieve over $\Q$
involving expansions via trace functions.
 It is Browning's work \cite{Bro15}
 that
 motivates our approach
 towards generalizing 
 the existing versions of 
 the square sieve (over $\Q$ and over $\F_q(T)$)
 to a geometric sieve over global fields.
 We will make an explicit comparison to the square sieve, polynomial sieve, and other relatives in \S \ref{sec_compare_polysieve}. 
\medskip

Since we will deduce Theorem \ref{FcnFSerreConj} from the above sieve inequality, we now state the relevant consequence of Theorem \ref{general-sieve} in the case of cyclic covers of prime degree
that we derive by optimizing the choice of $\alpha$.

\begin{theorem}\label{general-sieve-cyclic1}(Geometric Sieve for Prime Degree Cyclic Covers of $\P^n_K$)

\noindent
Let $K$ be a global field of arbitrary characteristic,
let $\cal{O}$ be the ring of integers in $K$,
and let $\height_K: K \longrightarrow [0, \infty)$ be the height function \eqref{eq:height-field} constructed from valuations that satisfy the product formula \eqref{eq:product}. 
Let $n, m \geq 1$ be integers,
take $\X := \P^n_K$,
and 
let $\Y$ be the projective scheme in the weighted projective space $\P_K^{n+1}\left(1,\dots,1,\frac{m}{\ell}\right)$, 
defined by a model
\beq\label{Y_dfn}
X_{n+1}^{\ell} = F(X_0, \ldots, X_n)
\eeq
with  $\ell$ prime such that $\ell \mid m$, 
and 
with $F \in {\mathcal{O}}[X_0, \ldots, X_n]$ homogeneous of degree $m$ 
and
having the property that the hypersurface 
in $\P^n_{\overline{K}}$ defined by
\[
F(X_0, \ldots, X_n) = 0
\]
is nonsingular.
Let
$\eta : \Y \longrightarrow \X$
be the cyclic map of degree $\ell$ defined over $\mathbb{A}_{\overline{K}}^n$ by
\beq\label{eta_dfn}
\eta(x_0, x_1, \ldots, x_n, x_{n+1}) := (x_0, x_1, \ldots, x_n).
\eeq
For an arbitrary $B > 0$, 
define ${\mathcal{A}}$ and ${\mathcal{S}}({\mathcal{A}})$ as in (\ref{Adfn}), (\ref{SAdfn}).
Then, for any nonempty finite set ${\mathcal{P}}  \subseteq V_K$ of 
finite 
places  of good reduction for $\eta$,
\begin{align}
\left|
{\mathcal{S}}({\mathcal{A}}) 
\right|
\leq &
\frac{\left|{\mathcal{A}}\right|}{|{\mathcal{P}}|}
(\deg \eta - 1)^2 
+
\frac{2}{|{\mathcal{P}}|}
\ds\sum_{\underline{x} \in {\mathcal{A}} }
|V_{\mathcal{P}}^{\mathrm{ram}}(\underline{x})|
\nonumber
\\
&
+
\frac{1}{\left|{\mathcal{P}}\right|^2}
\ds\sum_{\substack{
v_1, v_2 \in {\mathcal{P}}
\\v_1 \neq v_2}
}
\left|
\ds\sum_{
\substack{\underline{x} \in {\mathcal{A}}\\
\underline{x} \not\in \X_{\mathcal{O}}^{\mathrm{ram}}(v_1) \cup \X_{\mathcal{O}}^{\mathrm{ram}}(v_2) 
}
}
\left(
\left|
\eta_{v_1}^{-1}(\underline{x} \, (\mod v_1))
\right|
-
1
\right)
\cdot
\left(
\left|
\eta_{v_2}^{-1}(\underline{x} \, (\mod v_2))
\right|
-
1
\right)
\right|, \label{geo_sieve_intro}
\end{align}
where,
for each 
finite 
place $v \in V_K$  of good reduction for $\eta$, 
\beq\label{v_ram_dfn}
 \X_{{\cal{O}}}^{\mathrm{ram}}(v)
 :=
\left\{
\underline{x} \in \X({\cal{O}}): \eta_v \; \text{is ramified at} \; \underline{x} \, (\mod v)
\right\}.
\eeq
\end{theorem}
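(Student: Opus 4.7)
The plan is to apply the general geometric sieve (Theorem \ref{general-sieve}) with an optimized choice of the parameter $\alpha \geq 1$. For a prime-degree cyclic cover $\eta$ of degree $\ell$, the natural choice is $\alpha = \deg \eta - 1 = \ell - 1$, motivated by the following. For any place $v \in \mathcal{P}$ and any $\underline{x} \in \mathcal{A}$ unramified at $v$, the fiber size $N_v(\underline{x}) := |\eta_v^{-1}(\underline{x} \bmod v)|$ equals the number of $\ell$-th roots of $F(\underline{x}) \bmod v$ in $k_v$, so it belongs to $\{0, 1, \ell\}$ (with the value $1$ arising only in the degenerate regime $\ell \nmid \#k_v^{\ast}$, which is absent from the application in Theorem \ref{FcnFSerreConj}, where $\ell \mid q - 1$ forces $\ell \mid \#k_v^{\ast}$ for every $v$). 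In the regime $N_v(\underline{x}) \in \{0, \ell\}$ one verifies the key identity
\[
(\ell - 1) + (N_v(\underline{x}) - 1)(\ell - N_v(\underline{x})) = N_v(\underline{x}) - 1,
\]
matching exactly the factor $|\eta_{v_i}^{-1}(\underline{x} \bmod v_i)| - 1$ that appears in the third term of the target inequality.

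With $\alpha = \ell - 1$ fixed, I would split the square $I_\alpha(\underline{x})^2$ appearing in Theorem \ref{general-sieve} into diagonal and off-diagonal contributions. For the diagonal, a direct case check over $N_v(\underline{x}) \in \{0, 1, \ell\}$ yields the uniform bound $|\alpha + (N_v(\underline{x}) - 1)(\ell - N_v(\underline{x}))| \leq \ell - 1$; summing over $v \in \mathcal{P}$ and $\underline{x} \in \mathcal{A}$ and then dividing by $|\mathcal{P}|^2$ produces the first target term $(\deg \eta - 1)^2 |\mathcal{A}|/|\mathcal{P}|$. For the off-diagonal, I would pass to absolute values and extract $|\mathcal{P}|^2 - |\mathcal{P}| \leq |\mathcal{P}|^2$; since the summand vanishes whenever $\underline{x}$ lies in $\X^{\mathrm{ram}}_{\mathcal{O}}(v_1) \cup \X^{\mathrm{ram}}_{\mathcal{O}}(v_2)$, the inner sum already carries the required support restriction. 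Applying the key identity to replace each factor $\alpha + (N_{v_i}(\underline{x}) - 1)(\ell - N_{v_i}(\underline{x}))$ by $N_{v_i}(\underline{x}) - 1$ then identifies the resulting maximum as the third target term, while the ramification term from Theorem \ref{general-sieve} is already in the desired form.

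The main obstacle I anticipate is the clean verification of the algebraic identity above and its careful use in the off-diagonal step. This uses the primality of $\ell$ essentially: it restricts $\gcd(\ell, \#k_v^{\ast})$ to $\{1, \ell\}$, and hence $N_v(\underline{x})$ to $\{0, 1, \ell\}$. For composite $\deg \eta$ the fiber size could take more values and the optimal choice of $\alpha$ would need to be reconsidered, which explains why the result is stated specifically for prime-degree cyclic covers.
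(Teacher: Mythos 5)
Your proposal is correct and follows essentially the same route as the paper: apply Theorem \ref{general-sieve} with $\alpha = \ell - 1$, exploit the fact that (for unramified places of good reduction) the fiber size lies in $\{0,\ell\}$ to simplify the weight, and then split the resulting double sum into its diagonal and off-diagonal pieces. The only difference is cosmetic: the paper first expands $I_\alpha(\underline{x})^2$ for general $\alpha$, uses the identity $\Psi_v^2 = (\ell-1) + (\ell-2)\Psi_v$ (where $\Psi_v = N_v - 1$) to reduce the expansion to $\sum_{v_1,v_2}\bigl((\alpha-(\ell-1))^2 + (\alpha-(\ell-1))(\Psi_{v_1}+\Psi_{v_2}) + \Psi_{v_1}\Psi_{v_2}\bigr)$, and then reads off $\alpha = \ell-1$ as optimal, whereas you go directly to $\alpha = \ell - 1$ and use the equivalent identity $(\ell-1) + (N_v-1)(\ell-N_v) = N_v - 1$ to collapse each factor of $I_{\ell-1}(\underline{x})$ to $N_v - 1$ immediately. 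Both paths land on the same formula $I_{\ell-1}(\underline{x})^2 = \sum_{v_1,v_2}(N_{v_1}-1)(N_{v_2}-1)$, and your subsequent diagonal/off-diagonal split and estimates match the paper's. Your remark about the potential degenerate regime $\ell\nmid\#k_v^\ast$ is a thoughtful observation about the trichotomy $N_v\in\{0,1,\ell\}$ that the paper uses implicitly (and glosses over with the phrase ``good reduction for $\eta$''), but it does not affect the validity of either argument for the purposes of Theorem \ref{FcnFSerreConj}.
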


On the right-hand side of inequality \eqref{geo_sieve_intro}, the first term may be regarded as a main term for which a trivial upper bound suffices. 
We refer to the second term as the \emph{ramified sieve term} and remark that  
it is  similar in size to  the main term.
We refer to the third term as the \emph{unramified sieve term}
and remark that, in applications, the primary difficulty is to bound it nontrivially, 
and then to choose the sieving set $\mathcal{P}$ appropriately to balance the third term's contribution with that of the first term.

\subsection{Outline of the paper}
In \S \ref{sec_sieve}, we prove Theorem \ref{general-sieve} and Theorem \ref{general-sieve-cyclic1}. To prove Theorem \ref{general-sieve-cyclic1}, the essential point is to compute the optimal choice of $\alpha$ for which to apply Theorem \ref{general-sieve}.
The rest of the paper focuses on proving Theorem \ref{FcnFSerreConj}, as follows.  In \S \ref{sec_FF}, we recall notation and basic results related to the function field setting of Theorem \ref{FcnFSerreConj}.
In \S \ref{sec_duals_reductions}, we present results on duals and reductions necessary in our analysis of the unramified sieve term.
In \S \ref{sec_app_part1}, we reformulate Theorem \ref{FcnFSerreConj} as an affine statement (Theorem \ref{main-application}) and bound all but the unramified sieve term in the inequality (\ref{geo_sieve_intro}). (Here we apply a simple Schwartz-Zippel counting bound, for which we provide a proof in \S \ref{sec_counting}, for completeness.) The remainder of the work is focused on treating the unramified sieve term. In \S \ref{sec_strategy_sums}, we introduce background material on Fourier analysis on function fields and prove that the unramified sieve term can be stated as a mixed character sum. In \S \ref{sec_Weil_Deligne}, we state and verify the Weil-Deligne bounds we require for the unramified sieve term.  In \S \ref{sec_WD_app}, we finally bound the unramified sieve term.
In \S \ref{sec_choices}, we optimize the choice of the sieving set, thus completing the proof of Theorem \ref{FcnFSerreConj}.  Finally, Appendix \S \ref{sec:appendix}, written by Joseph Rabinoff, provides a self-contained
account of certain standard facts about duals and reductions employed in \S \ref{sec_duals_reductions}.

 \subsection*{Acknowledgements}
 We thank Dante Bonolis, Mihran Papikian, Joseph Rabinoff, and Melanie Wood for helpful conversations related to parts of this work. In addition, we thank the anonymous referee for exceptionally helpful recommendations with two significant effects. First, these recommendations simplified and strengthened the proof of Proposition  \ref{bound-complete-char-sum} (i), so that our initial restriction that $F$ in Theorem \ref{FcnFSerreConj} be Dwork-regular could be weakened to assume that $F=0$ is a nonsingular projective hypersurface. Second, these recommendations  suggested the strategy to prove Lemma \ref{gratefultothereferee}; this enabled averaging over prime elements in the sieving set, hence upgrading the main theorem from a result analogous (\ref{Munshi_correct}) to the present result. We thank the referee for these astute and generous recommendations.

\section{Derivation of the fundamental sieve inequalities}\label{sec_sieve}
\subsection{Proof of Theorem  \ref{general-sieve}}
We begin by proving the most general version of the sieve, as stated in Theorem \ref{general-sieve}.
 The proof follows the general framework of \cite{HB84} and \cite{Bro15}. 
 
 For any fixed real number $\alpha \geq 1$,
we consider the sum  
\[ 
\Sigma 
:= 
\ds\sum_{\underline{x} \in {\mathcal{A}}} 
\left( 
\ds\sum_{v \in {\mathcal{P}} \backslash V_{\mathcal{P}}^{\mathrm{ram}}(\underline{x})}  
\left(  
\alpha 
+ 
\left( |\eta_v^{-1} (\underline{x}(\bmod v))| -1 \right)
\cdot 
\left(\deg \eta - |\eta_v^{-1} (\underline{x}(\bmod v))| \right) 
\right)
\right)^2   
\]
and note that each $\underline{x}$ therein is added with a non-negative weight.
Moreover, observe that, 
if $\underline{x} \in \mathcal{S}(\mathcal{A})$, 
then the fiber above $\underline{x}$ 
has at least one element and at most $\deg \eta$ elements,
that is,
\[ 1 \leq |\eta^{-1} (\underline{x})| \leq \deg \eta.\]
Therefore, in this case,
for each 
$v\in {\mathcal{P}} \backslash V_{\mathcal{P}}^{\mathrm{ram}}(\underline{x})$, 
we have
\[  
\alpha 
+ 
\left(|\eta_v^{-1} (\underline{x}(\bmod v))| -1\right)
\cdot 
\left(\deg \eta - |\eta_v^{-1} (\underline{x}(\bmod v))| \right)
\geq 1.
\]
We deduce that,
 for every $\underline{x} \in \mathcal{S}(\mathcal{A})$,
\begin{align}\label{ram_one_term}
\ds\sum_{v\in {\mathcal{P}} \backslash V_{\mathcal{P}}^{\mathrm{ram}}(\underline{x})} 
 & 
 \left( 
 \alpha 
 + 
 \left(|\eta_v^{-1} (\underline{x}(\bmod v))| -1\right)
 \cdot 
 \left(\deg \eta - |\eta_v^{-1} (\underline{x}(\bmod v))| \right)
\right)
\\
& 
\geq
\sum_{v\in {\mathcal{P}} \backslash V_{\mathcal{P}}^{\mathrm{ram}}(\underline{x})} 1
= 
 |\mathcal{P}| - |V_{\mathcal{P}}^{\mathrm{ram}}(\underline{x})|,
\end{align}
which implies that
\begin{eqnarray}
\Sigma & \geq &
\ds\sum_{\underline{x} \in \mathcal{S}(\mathcal{A})} ( |\mathcal{P}| - |V_{\mathcal{P}}^{\mathrm{ram}}(\underline{x})|)^2 \nonumber \\
	& = & |\mathcal{P}|^2 \cdot |\mathcal{ S}(\mathcal{A})|  +  \sum_{\underline{x} \in \mathcal{S}(\mathcal{A})} (-2 |\mathcal{P}|\cdot |V_{\mathcal{P}}^{\mathrm{ram}}(\underline{x})| + |V_{\mathcal{P}}^{\mathrm{ram}}(\underline{x})|^2) \nonumber \\
	& \geq & |\mathcal{P}|^2 \cdot |\mathcal{ S}(\mathcal{A})|   -2  \sum_{\underline{x} \in \mathcal{S}(\mathcal{A})} |\mathcal{P}|\cdot |V_{\mathcal{P}}^{\mathrm{ram}}(\underline{x})|. \label{sig_lower}
	\end{eqnarray}
Rearranging the terms and  using the non-negativity of $|V_{\mathcal{P}}^{\mathrm{ram}}(\underline{x})|$ to enlarge the sum over $\mathcal{S}(\mathcal{A})$ to $\mathcal{A}$, we deduce that
\[ |\mathcal{S}(\mathcal{A})| \leq |\mathcal{P}|^{-2} \Sigma  + 2|\mathcal{P}|^{-1} \sum_{\underline{x} \in \mathcal{A}} |V_{\mathcal{P}}^{\mathrm{ram}}(\underline{x})|,\]
which completes the proof of \eqref{general-sieve-inequality1}.

\subsection{Equivalent formulation of Theorem \ref{general-sieve}} 
Recalling the notation $ \X_{{\cal{O}}}^{\mathrm{ram}}(v)$ in (\ref{v_ram_dfn}),
we see that
 the bound for $|\mathcal{S}(\mathcal{A})|$ may be rewritten by expanding the square inside $\Sigma$. This shows that 
\begin{equation}\label{general-sieve-inequality2}
\left|
{\mathcal{S}}({\mathcal{A}}) 
\right|
\leq
\frac{1}{\left|{\mathcal{P}}\right|^2}
\ds\sum_{v_1, v_2 \in {\mathcal{P}} }
\left|
\ds\sum_{i, j \in \{0, 1, 2\}}
c_{i, j}(\alpha)
S_{i, j}(v_1, v_2)
\right|
+
\frac{2}{\left|{\mathcal{P}}\right|}
\ds\sum_{\underline{x} \in {\mathcal{A}}}
|V_{\mathcal{P}}^{\mathrm{ram}}(\underline{x})|,
\end{equation}
in which, for all  
$v_1, v_2 \in {\cal{P}}$ 
and 
$i, j \in \{0, 1, 2\}$,
\beq\label{Sij_dfn}
S_{i, j}(v_1, v_2)
:=
\ds\sum_{\substack{
\underline{x} \in {\mathcal{A}}
\\
\underline{x} \not\in \X_{\mathcal{O}}^{\mathrm{ram}}(v_1) \cup \X_{\mathcal{O}}^{\mathrm{ram}}(v_2) 
}
}
\left|
\eta_{v_1}^{-1}(\underline{x} \, (\mod v_1))
\right|^i 
\cdot
\left|
\eta_{v_2}^{-1}(\underline{x} \, (\mod v_2))
\right|^j
\eeq
and  
$$
c_{i,j}(\alpha) 
:= 
\left\{
\begin{array}{cc}
(\alpha - \deg \eta)^2 & \text{if $(i,j)=(0,0)$,} 
\\
(\alpha - \deg \eta)(1+ \deg \eta ) & \text{if $(i,j)=(1,0)$ or $(0,1)$,} 
\\
(1 + \deg \eta)^2 & \text{if $(i,j)=(1,1)$,} 
\\
-(\alpha - \deg \eta) & \text{if $(i,j)=(2,0)$ or $(0,2)$,} 
\\
- (1 + \deg \eta) & \text{if $(i,j)=(2,1)$ or $(1,2)$,} 
\\
1 & \text{if $(i,j)=(2,2)$.} 
\end{array}
\right.
$$

In this formulation, we see that (\ref{general-sieve-inequality2}) is analogous to the polynomial sieve in \cite[Thm. 1.1]{Bro15}, except that our formulation omits the weight 
that is typically present in previous sieve inequalities of this type 
(see \S \ref{sec_compare_polysieve}).

\subsection{Proof of Theorem \ref{general-sieve-cyclic1}: optimizing the choice of $\alpha$}\label{sec_sieve_cyclic}
The sieve inequality in Theorem \ref{general-sieve-cyclic1}  follows from
inequality \eqref{general-sieve-inequality1} when specialized to the case of prime degree cyclic covers of   projective space, once we have computed the optimal choice of $\alpha$ to minimize the first term on the right-hand side of \eqref{general-sieve-inequality1}. 

To compute this choice, recall that, 
since $\eta$ is a cyclic map of prime degree $\ell$, 
for each $v \in V_K$ of good reduction for $\eta$
and
for each
$\underline{x} \in \mathbb{X}(\mathcal{O})$,
\begin{equation*}
 \left|
 \eta_v^{-1}(\underline{x} \, (\mod v))
 \right|
 =
 \left\{
 \begin{array}{cc}
 0 & \text{if $\underline{x} \, (\mod v) \not\in \Im \eta_v$,}
 \\
 1 & \text{if $\underline{x} \, (\mod v) \in \Im \eta_v$, $\underline{x} \in \mathbb{X}_{\mathcal{O}}^{\text{ram}}(v)$,}
 \\
 \ell & \text{if $\underline{x} \, (\mod v) \in \Im \eta_v$, $\underline{x} \not\in \mathbb{X}_{\mathcal{O}}^{\text{ram}}(v)$.}
 \end{array}
 \right.
\end{equation*}
Note that  in the above  we used  the primality of $\deg \eta$.

We deduce that, 
for each $v \in V_K$ of good reduction for $\eta$
and
for each
$\underline{x} \in \mathbb{X}(\mathcal{O})$,
\begin{equation}\label{eta_phi_fact}
 \left(
 \left|
 \eta_v^{-1}(\underline{x} \, (\mod v))
 \right|
 -
 1\right)^2
 =
 \left\{
 \begin{array}{cc}
 1 & \text{if $\underline{x} \, (\mod v) \not\in \Im \eta_v$,} 
 \\
 0 & \text{if $\underline{x} \, (\mod v) \in \Im \eta_v$, $\underline{x} \in \mathbb{X}_{\mathcal{O}}^{\text{ram}}(v)$,} 
 \\
 (\ell-1)^2 & \text{if $\underline{x} \, (\mod v) \in \Im \eta_v$, $\underline{x} \not\in \mathbb{X}_{\mathcal{O}}^{\text{ram}}(v)$.} 
 \end{array}
 \right.
\end{equation}
As such,
for each $v \in V_K$ of good reduction for $\eta$
and
for each $\underline{x} \in \mathbb{X}(\mathcal{O}) \backslash \mathbb{X}_{\mathcal{O}}^{\text{ram}}(v),$  
\begin{eqnarray}\label{eta_1}
 \left(
 \left|
 \eta_v^{-1}(\underline{x}  \, (\mod v))
 \right|
 -
 1
 \right)^2
 =
 (\ell-1)
 +
 (\ell-2)
 \left(
 \left|
 \eta_v^{-1}(\underline{x}  \, (\mod v))
 \right|
 -
 1
 \right).
\end{eqnarray}
Noting that $\underline{x} \in {\mathcal{S}}({\mathcal{A}})$ implies 
$\underline{x} \, (\mod v) \in \Im \eta_v$, 
we infer that, 
for each $ v \in V_K$ of good reduction for $\eta$
and 
for each
$\underline{x}  \in {\mathcal{S}}({\mathcal{A}})$,
we have
\begin{equation}\label{eta_2}
 \left|
 \eta_v^{-1}(\underline{x}  \, (\mod v))
 \right|
 \geq 
 1.
\end{equation}

Now, recall the notation of the model (\ref{Y_dfn}) for the prime degree cyclic cover $\Y$. Motivated by our upcoming expressions in terms of character sums in \S \ref{sec_strategy_sums},  
for each $v \in V_K$ of good reduction for $\eta$ and for each $\underline{x} \in \mathbb{X}(\mathcal{O})$, 
we set
\begin{equation}\label{Psi-v-F-notation}
\Psi_v(F(\underline{x})) :=  \left|\eta_v^{-1}(\underline{x} \, (\mod v))\right| - 1.
\end{equation}
Then (\ref{eta_1})  implies that, 
for each $ v \in V_K$  of good reduction for $\eta$ and for each
$\underline{x} \in \mathbb{X}(\mathcal{O}) \backslash  \mathbb{X}_{\mathcal{O}}^{\text{ram}}(v)$,
\begin{eqnarray}\label{obs3-first-special}
 \Psi_v(F(\underline{x}))^2
 =
 (\ell-1)
 +
 (\ell-2)
\Psi_v(F(\underline{x})),
\end{eqnarray}
while  (\ref{eta_2}) implies that,
for each $v \in V_K$  of good reduction for $\eta$ and for each
 $\underline{x} \in {\mathcal{S}}({\mathcal{A}})$, 
$$
\Psi_v(F(\underline{x}))
 \geq 
 0.
$$

We  apply these observations in inequality \eqref{general-sieve-inequality1} of Theorem \ref{general-sieve}.
For this, fix
 $\alpha \geq 1$ and $\underline{x} \in {\mathcal{S}}(\mathcal{A})$,
 and expand
\[ I_{\alpha}(\underline{x})^2
=
\left(
\ds\sum_{
v \in {\mathcal{P}} \backslash V_{\mathcal{P}}^{\text{ram}}(\underline{x})
}
\left(
\alpha + \Psi_v(F(\underline{x})) (\ell - 1 - \Psi_v(F(\underline{x})) )
\right)
\right)^2
\] 
as
\begin{multline*}
\ds\sum_{
v_1, v_2 \in {\mathcal{P}} \backslash V_{\mathcal{P}}^{\text{ram}}(\underline{x})
}
\left(
\alpha^2 
+
\alpha (\ell-1)  
\left(
\Psi_{v_1}(F(\underline{x})) + \Psi_{v_2}(F(\underline{x})) 
\right)
\right.
\\
\left.
-
\alpha   
\left(
\Psi_{v_1}(F(\underline{x}))^2 + \Psi_{v_2}(F(\underline{x}))^2 
\right)
+
(\ell - 1  )^2 \Psi_{v_1}(F(\underline{x})) \Psi_{v_2}(F(\underline{x}))
\right.
\\
\left.
-
(\ell - 1 ) 
 \left(
 \Psi_{v_1}(F(\underline{x}))^2\Psi_{v_2}(F(\underline{x})) + \Psi_{v_1}(F(\underline{x})) \Psi_{v_2}(F(\underline{x}))^2 \right)
+
\Psi_{v_1}(F(\underline{x}))^2 \Psi_{v_2}(F(\underline{x}))^2
\right).
\end{multline*}
Applying (\ref{obs3-first-special}), 
the above sum simplifies precisely to 
\[ 
\ds\sum_{
v_1, v_2 \in {\mathcal{P}} \backslash V_{\mathcal{P}}^{\text{ram}}(\underline{x})
}
\left(
(\alpha - (\ell - 1))^2
+
(\alpha - (\ell - 1)) \left(\Psi_{v_1}(F(\underline{x})) + \Psi_{v_2}(F(\underline{x})) \right)
+
\Psi_{v_1}(F(\underline{x})) \Psi_{v_2}(F(\underline{x}))
\right),
\]
which reveals that the optimal choice of $\alpha$ to minimize $I_{\alpha}(\underline{x})^2$ is
$$
\alpha := \ell-1.
$$
Using the above choice of $\alpha$ in  (\ref{general-sieve-inequality1}) of Theorem \ref{general-sieve}, we obtain that
\begin{eqnarray}\label{special1}
\left|
{\mathcal{S}}(\mathcal{A})
\right|
&\leq&
\frac{1}{\left|{\mathcal{P}}\right|^2}
\ds\sum_{\underline{x} \in {\mathcal{A}}}
\;
I_{\ell-1}(\underline{x})^2
+
\frac{2}{|{\mathcal{P}}|}
\;
\ds\sum_{\underline{x} \in {\mathcal{A}} }
|V_{\mathcal{P}}^{\text{ram}}(\underline{x})|
\nonumber
\\
&=&
\frac{1}{\left|{\mathcal{P}}\right|^2}
\ds\sum_{\underline{x} \in {\mathcal{A}}}
\;
\ds\sum_{
v_1, v_2 \in {\mathcal{P}} \backslash V_{\mathcal{P}}^{\text{ram}}(\underline{x})
}
\Psi_{v_1}(F(\underline{x})) \Psi_{v_2}(F(\underline{x}))
+
\frac{2}{|{\mathcal{P}}|}
\ds\sum_{\underline{x} \in {\mathcal{A}} }
|V_{\mathcal{P}}^{\text{ram}}(\underline{x})|.
\end{eqnarray}
We leave the ramified sieve term as is, and treat the first term on the right-hand side as follows.
By 
interchanging summations,  we write this term as 
\[ 
\frac{1}{\left|{\mathcal{P}}\right|^2}
\ds\sum_{
v_1, v_2 \in {\mathcal{P}}
}
\;
\ds\sum_{
\underline{x} \in {\mathcal{A}} \backslash \left( \mathbb{X}_{\mathcal{O}}^{\text{ram}}(v_1) \cup  \mathbb{X}_{\mathcal{O}}^{\text{ram}}(v_2) \right)
}
\Psi_{v_1}(F(\underline{x})) \Psi_{v_2}(F(\underline{x})).
\]
Distinguishing between $v_1 = v_2$ and $v_1 \neq v_2$, we obtain that the above expression equals
\beq\label{primes_nontrivial_average}
\frac{1}{\left|{\mathcal{P}}\right|^2}
\ds\sum_{
v_1 \in {\mathcal{P}}
}
\;
\ds\sum_{
\underline{x} \in {\mathcal{A}} \backslash  \mathbb{X}_{\mathcal{O}}^{\text{ram}}(v_1) 
}
\Psi_{v_1}(F(\underline{x}))^2
+
\frac{1}{\left|{\mathcal{P}}\right|^2}
\ds\sum_{\substack{
v_1, v_2 \in {\mathcal{P}}
\\v_1 \neq v_2}
}
\;
\ds\sum_{
\underline{x} \in {\mathcal{A}} \backslash \left( \mathbb{X}_{\mathcal{O}}^{\text{ram}}(v_1) \cup  \mathbb{X}_{\mathcal{O}}^{\text{ram}}(v_2) \right)
}
\Psi_{v_1}(F(\underline{x})) \Psi_{v_2}(F(\underline{x})).
\eeq
By (\ref{eta_phi_fact}), for each summand in the inner sum of the first double sum above we have 
$$\Psi_{v_1}(F(\underline{x}))^2 \leq (\ell-1)^2,$$ 
so (\ref{primes_nontrivial_average}) is bounded from above by 
\beq\label{primes_trivial_average}
(\ell - 1)^2 \frac{\left|{\mathcal{A}}\right|}{\left|{\mathcal{P}}\right|}
+
\frac{1}{\left|{\mathcal{P}}\right|^2}
\ds\sum_{\substack{
v_1, v_2 \in {\mathcal{P}}
\\v_1 \neq v_2}
}
\left|
\ds\sum_{
\underline{x} \in {\mathcal{A}} \backslash \left( \mathbb{X}_{\mathcal{O}}^{\text{ram}}(v_1) \cup  \mathbb{X}_{\mathcal{O}}^{\text{ram}}(v_2) \right)
}
\Psi_{v_1}(F(\underline{x})) \Psi_{v_2}(F(\underline{x}))
\right|.
\eeq
Inserting this    in (\ref{special1}) then proves the theorem.

\subsection{Comparison to square and polynomial sieves}\label{sec_compare_polysieve}
It is informative to compare Theorems \ref{general-sieve} and \ref{general-sieve-cyclic1} to their antecedents in a wide range of settings over $\Q$. These include the square sieve of Heath-Brown \cite{HB84}, which counts perfect square values of a polynomial; the power sieve in  \cite{Mun09}, \cite{Bra15}, which counts perfect $r$-th power values of a polynomial; and stronger versions of the square or power sieve, combined with the $q$-analogue of Van der Corput's method, in \cite{Pie06}, \cite{HBPie12}. Moreover, our work is motivated by the polynomial sieve of   Browning \cite{Bro15}, also developed in a different direction involving trace functions, by Bonolis \cite{Bon21}.
 
 All of these works develop a sieve method to tackle a problem roughly of the following form (over $\Q$): given an appropriate polynomial $f(z;\underline{x})$ and a set of interest $\mathcal{A}$, how many $\underline{x} \in \mathcal{A}$ of bounded height have $f(z;\underline{x})$ solvable? This is clearly related to both our aims and our methods, now in the setting of function fields. 

One of the most obvious differences is that we do not include a weight to count multiplicities of values. To understand the two main roles of the multiplicity-counting weight in the earlier settings,  suppose one fixes a set $\mathcal{A}$ and aims to bound from above the quantity 
\beq\label{SA_poly}
 S(\mathcal{A}) := \sum_{\substack{\underline{x} \in \mathcal{A}\\f(z;\underline{x}) \; \text{soluble}}} w(\underline{x}) 
 \eeq
for a fixed integer-coefficient polynomial $f(z;\underline{x}) = p_0(\underline{x}) z^d + \cdots + p_d(\underline{x})$ of interest, with $\underline{x} = (x_0,\ldots, x_n)$.
In the most straightforward comparison to our setting, Browning's  polynomial sieve then provides an upper bound for $S(\mathcal{A})$ in the form of an inequality like (\ref{general-sieve-inequality2}), but with $S_{i,j}(v_1,v_2)$ containing the coefficient $w(\underline{x})$ within the sum expressed in (\ref{Sij_dfn}).

In the classical setting of the square sieve, in order to bound from above the number of perfect square values of a fixed polynomial, say $F(y_1, \ldots, y_k) \in \Z[Y_1,\ldots, Y_k]$ with $\underline{y} = (y_1,\ldots, y_k)$ lying in a certain region $\mathcal{R}$, we would set   $f(z;x) = z^2 -x$ and choose $w(x) = \#\{ \underline{y} \in \mathcal{R} : F(y_1, \ldots, y_k) = x\}$.  Then we would apply the sieve inequality, including the weight $w$, to bound (\ref{SA_poly}) from above. Similarly for the $r$-power sieve and its variants in \cite{Mun09}, \cite{HBPie12}, \cite{Bra15}, to  count perfect $r$-power values of some polynomial $F(y_1, \ldots, y_k)$ of interest, we would set $f(z;x) = z^r -x$ and define $w(x)$ as above.  With the more recent polynomial sieve now available in \cite{Bro15} (and furthermore in our present geometric treatment), this type of multiplicity-counting weight is no longer as relevant, since one could instead nominally set $w(x)=1$ and instead choose $p_2(\underline{y}) = F(\underline{y})$ for the square sieve or more generally $p_r(\underline{y}) = F(\underline{y})$ for the $r$-power sieve. 
Indeed, in the application of \cite[p. 11]{Bro15}, the weight is defined simply to be the indicator function of a finite set.
(Bonolis \cite[p. 27]{Bon21} defines a weight that counts such solutions in a ``smoothed'' sense.)

This brings us to the second use of the weight: it is used to eliminate what we have here called the ramified sieve term, that is, 
\[\frac{2}{|{\mathcal{P}}|}
\;
\ds\sum_{\underline{x} \in {\mathcal{A}} }
|V_{\mathcal{P}}^{\text{ram}}(\underline{x})|.\]
Previous sieve lemmas in number field settings assume that $w(\underline{x})$ vanishes for $\underline{x}$ sufficiently large, e.g., that $w(\underline{x}) = 0$ if  $|\underline{x}| \geq \exp(|\mathcal{P}|)$. In the classical settings, in the derivation of the sieve inequality, at step (\ref{ram_one_term}) the expression $|V_{\mathcal{P}}^{\mathrm{ram}}(\underline{x})|$ counted the number of primes in the sieving set $\mathcal{P}$ that divided a certain (nonzero) polynomial expression $H$ in $\underline{x}$, so that for some degree $D$, 
\[ |V_{\mathcal{P}}^{\mathrm{ram}}(H(\underline{x}))| \leq \omega( H(\underline{x})) = \o(|\underline{x}|^D) = \o(|\mathcal{P}|),\]
 under the relevant hypothesis assumed for  the support of $w$. (Here, as is standard, $\omega$ counts the number of distinct prime factors, with $\omega(m) \ll \log m / \log \log (3m)$ for any integer $m \geq 1$.) 
   In such a setting, we would conclude in place of (\ref{sig_lower}) that 
\[ \Sigma \geq  |\mathcal{P}|^2 \cdot |\mathcal{ S}(\mathcal{A})|   + |\mathcal{ S}(\mathcal{A})| \cdot \o ( |\mathcal{P}|^2),\]
so that the
contribution of the ramified sieve term is dominated by the unramified sieve term, and hence omitted from the final sieve inequality (see e.g. \cite[Thm. 1.1]{Bro15}).
In our present treatment, we prefer not to include any weight and hence we explicitly record the ramified sieve term, which must be bounded later.

Finally, in comparison to the polynomial sieve, we have emphasized not the size of the fiber $\left| \eta_{v}^{-1}(\underline{x} \, (\mod v)) \right|$
but the quantity $\left| \eta_{v}^{-1}(\underline{x} \, (\mod v)) \right|-1$ as in (\ref{Psi-v-F-notation}). As we have seen in the argument of \S \ref{sec_sieve_cyclic}, at least in the case of cyclic morphisms, this normalization is more informative of the correct choice of $\alpha$, rather than the expansion in terms of the fibers, which leads to the hard-to-interpret coefficients in (\ref{general-sieve-inequality2}).


\section{Preliminaries on function field arithmetic}\label{sec_FF}

 Since Theorem \ref{FcnFSerreConj} is a result about $\F_q(T)$,
 we gather in this section standard function field notation 
 and remarks pertinent to our forthcoming arguments. 
 
 We recall the general setting of Section 1:
 $K$ is a global field of  arbitrary characteristic;
 ${\cal{O}}$ is the ring of integers in $K$;
 $V_K$ is the set of places of $K$;
the valuations $|\cdot|_v$ 
associated to the places $v \in V_K$
are normalized such that 
the product formula \eqref{eq:product} holds;
$\height_K$ is the height function on $\P^n_K$,
 defined in \eqref{eq:height-proj}
and its companion  height function on $K$, also denoted $\height_K$, is defined in \eqref{eq:height-field}.

We now focus on the particular global function field 
$$K = \F_q(T)$$
and on the ring
$$\cal{O} = {\cal{O}}_K = \F_q[T],$$ 
where
$q$ is the power of an odd rational prime $p$. 
This setting will be kept throughout the remainder of the paper, unless explicitly specified otherwise.

In the setting of Theorem  \ref{FcnFSerreConj} and Theorem \ref{main-application},
we assume that $p \nmid m$, where $m$ is the degree of  the polynomial $F$ therein.
Furthermore, we assume that 
$\ell \mid (q-1)$,  where $\ell$ is a rational prime defined as the degree of the cyclic cover $\eta$ therein;
this divisibility assumption ensures that $\F_q$ contains the $\ell$-th roots of unity. Finally, the assumption $\ell\mid m$ ensures that the weighted projective space $\P_{\F_q(T)}^{n+1}\left(1, \ldots, 1, \frac{m}{\ell}\right)$ is well-defined.

Note that $K = \F_q(T)$ is the simplest instance of a global function field with field of constants $\F_q$, 
and 
that $\cal{O}_K$ is the ring of elements of $K$ which have only $\frac{1}{T}$ as a pole. In particular, $\cal{O}_K$ is a Dedekind domain (and, actually, a Euclidean domain)
and plays the role of
the ring $\Z$ in the analogy between
the arithmetic of 
$\F_q(T)$ and that of $\Q$.

As usual, we identify a place $v \in V_K$ with a generator of its associated unique  maximal ideal.
For our particular $K = \F_q(T)$, we have 
either that $v = \frac{1}{T}$, which we refer to as {\it{the place at infinity}} of $K$,
or 
that any $v \in V_K \backslash\{ \frac{1}{T}\}$, which we refer to as a {\it{finite place}} or simply as a {\it{prime}} of $K$, 
may be thought of as a monic irreducible polynomial in $\F_q[T]$. 
We recall that, 
for a nonzero polynomial $x \in {\cal{O}}_K = \F_q[T]$, 
we use 
$\deg_T (x)$ to denote its degree in $T$.

To simplify the exposition, we use the symbol $\infty$ for $\frac{1}{T}$
and the symbol $\pi$ for an arbitrary prime of $K$ (that is, a monic irreducible in $\cal{O}_K$). 
We denote by $K_{\infty}$ the completion of $K$ with respect to the topology defined by $|\cdot|_{\infty}$ 
and 
recall that
$$
K_{\infty} =
 \F_q\left(\left(\frac{1}{T}\right)\right) = 
\left\{
\ds\sum_{j \leq N} a_j T^j: N \in \Z, a_j \in \F_q \; \forall j \leq N, a_N \neq 0 
\right\}.
$$
We denote by $k_{\pi}$  the residue field $\F_q[T]/(\pi)$ of $\pi$
and 
by $\overline{k}_{\pi}$  a fixed algebraic closure of $k_{\pi}$,
and 
recall that $k_{\pi}$ is a finite field with $q^{\deg_T(\pi)}$ elements.
We denote by $\ord_{\pi} (x)$  the power of $\pi$ 
that exactly divides $x$.

The absolute values 
$|\cdot|_{\infty}$, $|\cdot|_{\pi}$ on $K$ 
are defined by
$$
|0|_{\infty} := 0, 
\quad 
\left|\frac{x}{y}\right|_{\infty} := q^{\deg_T x - \deg_T y},$$
and
$$
|0|_{\pi} := 0, 
\quad 
\left|\frac{x}{y}\right|_{\pi} := q^{\ord_\pi (y) - \ord_\pi (x)},$$
where  $x, y \in \F_q[T] \backslash \{0\}$. 
With these definitions,  
the associated valuations satisfy the product formula \eqref{eq:product}, 
the height function on $\P_K^n$ becomes
$$
\height_K([x_0: x_1: \ldots : x_n])
=
\max \{|x_i|_{\infty}: 0 \leq i \leq n \}
$$
for any $[x_0: x_1: \ldots : x_n] \in \P_K^n$,
and
the height function on $K$ becomes
\beq\label{height_infty}
\height_K(x) = |x|_{\infty}
\eeq
for any $x \in K^{\ast}$.

Remark that, since $K = \F_q(T)$ has class number 1,
for any $[x] = [x_0: x_1: \ldots : x_n] \in \P_K^n$
we can find 
$x_0', x_1', \ldots, x_n' \in {\cal{O}}_K$
with $\gcd(x_0', x_1', \ldots, x_n') = 1$, such that
$[x] = [x_0': x_1': \ldots : x_n']$
and
$
\height_K([x])
=
\max \{|x_i'|_{\infty}: 0 \leq i \leq n \}$.
Reasoning similarly, 
we can extend the height $\height_K$ to $\A_K^{n+1}$ by setting
$
\height_K((x_0, x_1, \ldots, x_n))
:=
\max\{\height_K(x_i): 0 \leq i \leq n\}
$
for any $\underline{x} := (x_0, x_1, \ldots, x_n) \in \A_K^{n+1}$,
which is consistent with  \eqref{eq:height-affine}.

In order to employ Fourier analysis on $K$, 
we
extend $|\cdot|_{\infty}$ to $K_{\infty}^{n+1}$ by
$$
|(x_0, x_1, \ldots, x_n)|_{\infty} := \max \left\{|x_i|_{\infty}: 0 \leq i \leq n \right\}
$$
and
consider the additive character
$$
\psi_{\infty} : K_{\infty} \longrightarrow \C^{\ast},
$$
\beq\label{add_char}
\psi_{\infty} \left(\ds\sum_{j \leq N} a_j T^j \right) := \exp \left(\frac{2 \pi i \Tr_{\F_q/\F_p}(a_{-1})}{p}\right),
\eeq
where $\Tr_{\F_q/\F_p}$ is the trace map. 

 \section{Preliminaries on duals and reductions}\label{sec_duals_reductions}

  Our proof of  Theorem \ref{FcnFSerreConj} will be an application of the sieve inequality in Theorem \ref{general-sieve-cyclic1} for $K = \F_q(T)$.
  To choose a sieving set ${\cal{P}}$ and to state the appropriate Weil-Deligne bounds needed to estimate the resulting unramified sieve term, 
  we require certain standard facts about duals and reductions.  An appendix by Joseph Rabinoff provides a self-contained reference for all the facts we require; we now summarize the consequences for our specific application.

\begin{proposition}\label{projective-geometric-lemma}
Let $q$ be an odd rational prime power, $n \geq 2$ an integer, 
and 
$H \in \F_q[T][X_0, \ldots, X_n]$ a homogeneous polynomial of degree
$m := \deg_{\underline{X}}(H)$ with $m  \geq 2$.
 Denote by $W$
the  projective hypersurface
defined in $\mathbb{P}_{\F_q(T)}^n$ by
$$W: \quad H(X_0, \ldots, X_n) = 0$$
and assume that $W$ is smooth over $\F_q(T)$. 
Then the following properties hold.
\begin{enumerate}
\item[(i)]\label{dual_abs_irred}
$W$ and its dual $W^{\ast}$ are  geometrically integral, with $\dim W = \dim W^{\ast} = n-1$.
\item[(ii)]\label{H_properties}
There exists an absolutely irreducible, homogeneous polynomial
$H^{\ast} \in \F_q[T][X_0, \ldots, X_n],$
which depends only on $H$, such that $W^{\ast}$ is defined  in $\mathbb{P}_{\F_q(T)}^n$ by  
$$W^{\ast}: \quad H^{\ast}(X_0, \ldots, X_n) = 0.$$
\item[(iii)]
There exists a finite set of finite places
$\cal{P} \subseteq \F_q[T],$
which  depends only on $H$ and $H^*$,
such that:
\begin{enumerate}
\item[(iii.1)]\label{H_pi_exists}
for all finite places   $ \pi \not\in \cal{P}$,
$\deg_{\underline{X}}(H (\mod \pi)) = \deg_{\underline{X}}(H)$
and 
the projective variety
$W_{\pi}$
defined by the equation
$$W_{\pi}: \quad H(X_0, \ldots, X_n) \equiv 0 \, (\mod \pi)$$
is a  smooth and geometrically integral hypersurface;
\item[(iii.2)]\label{H_pi_star_exists}
for all finite places  $\pi \not\in \cal{P} $,
the dual variety $(W_{\pi})^* $ is geometrically integral and is defined by the equation
$$(W_{\pi})^{\ast}: \quad H^{\ast}(X_0, \ldots, X_n) \equiv 0 \, (\mod \pi);$$
in particular, $(W_\pi)^*=(W^*)_\pi$ and the notation $W_\pi^*$ is well-defined.
\item[(iii.3)]\label{W_W_pi_dim}
for all finite places   $  \pi \not\in \cal{P} $,
$
\dim W_{\pi} = \dim W^{\ast}_{\pi} = n-1.
$
\end{enumerate}
\end{enumerate}
\end{proposition}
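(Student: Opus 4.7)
The plan is to establish parts (i)--(iii) in order, invoking Rabinoff's Appendix \S \ref{sec:appendix} for the projective-duality facts we need and using a spreading-out argument over the Dedekind base $\Spec \F_q[T]$ for the reduction statements in (iii).

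For parts (i) and (ii), I would first argue that $W$ is geometrically integral of dimension $n-1$: smoothness yields geometric reducedness, while geometric connectedness for $n \ge 2$ follows from $H^0\bigl(W_{\overline{\F_q(T)}},\mathcal{O}_W\bigr)=\overline{\F_q(T)}$, which one reads off the exact sequence $0\to\mathcal{O}(-m)\to\mathcal{O}\to\mathcal{O}_W\to 0$ on $\PP^n$. The dual $W^{*}$ is by definition the closure of the image of the Gauss map $\gamma\colon W\to (\PP^n)^{\vee}$, $x\mapsto T_xW$; since $\gamma$ is a morphism from a geometrically integral variety, $W^{*}$ is geometrically integral, and since $\gamma$ is finite onto its image for smooth hypersurfaces of degree $m\ge 2$ (as recalled in the appendix, in the mild generality we need), $\dim W^{*}=n-1$. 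A geometrically integral hypersurface in $(\PP^n)^{\vee}$ is cut out by a unique (up to scalar) absolutely irreducible homogeneous polynomial over $\F_q(T)$; clearing denominators produces the desired $H^{*}\in\F_q[T][X_0,\ldots,X_n]$, depending only on $H$, which proves (ii).

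For part (iii), I would view the integral models of $W$ and $W^{*}$ as flat projective schemes $\mathcal{W},\mathcal{W}^{*}\to\Spec\F_q[T]$ cut out by $H$ and $H^{*}$. The loci on the base where (a) the leading form of $H$ or $H^{*}$ in $\underline{X}$ vanishes modulo $\pi$, (b) the fibre $\mathcal{W}_\pi$ fails to be smooth, or (c) the fibre $\mathcal{W}_\pi$ or $\mathcal{W}^{*}_\pi$ fails to be geometrically integral of dimension $n-1$, are all closed subsets of the one-dimensional scheme $\Spec\F_q[T]$, by openness of smoothness and constructibility of geometric integrality in fibres of finite-type morphisms (EGA IV.9 and IV.12, or the corresponding Stacks Project tags). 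Each such closed subset is therefore a finite set of primes, and we take $\mathcal{P}$ to be their union. This gives (iii.1) and (iii.3) directly.

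The main obstacle is (iii.2), the compatibility of duality with reduction, $(W_\pi)^{*}=(W^{*})_\pi$ for $\pi\notin\mathcal{P}$. This is the step where positive characteristic introduces a genuine subtlety: the Gauss map of a smooth hypersurface can fail to be separable, producing ``strange'' fibres whose dual has smaller than expected dimension, and a priori this pathology could occur at infinitely many primes. I would handle it by enlarging $\mathcal{P}$, if necessary, to exclude the finitely many $\pi$ at which the spreading-out of $\gamma$ to $\mathcal{W}\to(\PP^n_{\F_q[T]})^{\vee}$ fails to remain finite onto its image, so that duality commutes with fibrewise base change away from $\mathcal{P}$; this is precisely the content formalised in Rabinoff's appendix. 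Once this is in place, $(W^{*})_\pi$ is cut out by $H^{*}\bmod\pi$ and coincides with $(W_\pi)^{*}$, and the absolute irreducibility of $H^{*}\bmod\pi$ follows from the geometric integrality of $(W_\pi)^{*}$ established in (iii.1) applied to the dual model.
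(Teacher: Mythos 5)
Your treatment of parts (i) and (ii) is correct and matches the paper's route: geometric integrality of $W$ from smoothness (the paper cites Lemma \ref{smooth-geom-irred}), geometric integrality and dimension of $W^*$ from the Gauss map via Proposition \ref{dual-of-hypersurface}, and clearing denominators to land $H^*$ in $\F_q[T][X_0,\ldots,X_n]$. For (iii.1) and (iii.3) you use EGA-style openness-of-smoothness and constructibility-of-geometric-integrality over the base $\Spec\F_q[T]$; the paper instead relies on the explicit Nullstellensatz/primitivity spreading-out in Rabinoff's Proposition \ref{dual-spread-out}. Both are valid, and your route is arguably more conceptual, at the cost of citing heavier machinery.

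The problem is (iii.2). You correctly flag that this is the delicate step, but the criterion you propose — enlarge $\mathcal{P}$ so that the spread-out Gauss map $\gamma\colon\mathcal{W}\to(\PP^n_{\F_q[T]})^{\vee}$ ``remains finite onto its image,'' and conclude that duality commutes with fibrewise base change — is not a correct reduction. Finiteness of $\gamma$ onto $\mathcal{W}^*$ does not by itself make the formation of the scheme-theoretic image commute with specialization: in that setting one still needs the cokernel of $\sO_{\mathcal{W}^*}\hookrightarrow\gamma_*\sO_{\mathcal{W}}$ to be $\sO_S$-torsion-free (equivalently, vanishing of the relevant $\mathrm{Tor}_1$ against $\kappa(\pi)$), and this is an additional condition that must be arranged by excluding further primes. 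Moreover, this is not ``precisely the content formalised in Rabinoff's appendix'': Proposition \ref{dual-spread-out} does not spread out finiteness of the Gauss map at all. It works purely with ideals — it shows $\ker(g)=(H^*)$ in $\sO_S[Y_0,\ldots,Y_n]$ using primitivity of $H^*$, the UFD property of $\sO_S$, and Lemma \ref{ht-1-prime}, and then reduces this ideal identity mod $\pi$. There is also a circularity in your last sentence: you invoke geometric integrality of $(W_\pi)^*$ ``from (iii.1) applied to the dual model,'' but (iii.1) requires smoothness, which $(W^*)_\pi$ typically lacks; what you actually need is either constructibility applied directly to $\mathcal{W}^*$ (which you do have) or Proposition \ref{dual-of-hypersurface}(1) applied to the already-smooth $W_\pi$ — and the latter presupposes the identification $(W_\pi)^*=(W^*)_\pi$ you are in the middle of establishing. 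To close the gap you should either (a) follow the ideal-theoretic argument of Proposition \ref{dual-spread-out} as written, or (b) run a degree-counting argument: show the Gauss image of $W_\pi$ lies in $(W^*)_\pi$ for $\pi$ outside a finite set (by reducing the relation $H^*(\partial H/\partial X_0,\ldots,\partial H/\partial X_n)\in(H)$ after clearing a fixed denominator), then note both $(W_\pi)^*$ and $(W^*)_\pi$ are geometrically integral hypersurfaces of dimension $n-1$, forcing equality.
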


\begin{proof}
 Assertion~(i) is the statement of Proposition~\ref{dual-of-hypersurface}(1).  
 As observed in the appendix, for any field $k$, a hypersurface in $\mathbb{P}_k^n$ is the zero set of a nonzero homogeneous polynomial in $k[X_0,\ldots,X_n]$. Thus implies that $W^*$ is defined by some $H^*\in \F_q(T)[X_0,\ldots,X_n]$; after clearing denominators, we may assume $H^*\in \F_q[T][X_0,\ldots,X_n]$.  
 Absolute irreducibility of $H^*$ is equivalent to geometric irreducibility of $W^*$, so this proves~(ii).  
 Let $\cP$ be the finite set $S$ in Proposition~\ref{dual-spread-out}.  Then~(iii.1) and~(iii.2) amount to statements~(2) and~(3) of Proposition~\ref{dual-spread-out}, using Lemma~\ref{smooth-geom-irred} and Proposition~\ref{dual-of-hypersurface}(1) for geometric integrality.  Assertion~(iii.3) is a consequence of~(iii.1) and~(iii.2).
\end{proof}


\section{Initiating the sieve to prove Theorem  \ref{FcnFSerreConj}}\label{sec_app_part1}


We are now ready to start the proof of Theorem \ref{FcnFSerreConj} on counting points on cyclic covers of $\mathbb{P}_K^n$ 
for $K = \F_q(T)$ and
an arbitrary integer $n \geq 2$. 

Recall that, in  Theorem \ref{general-sieve-cyclic1},
we take $\X := \P^n_K$
and 
$\Y$ the projective scheme in the weighted projective space $\P_K^{n+1}\left(1,\dots,1,\frac{m}{\ell}\right)$
defined by a model
$X_{n+1}^{\ell} = F(X_0, \ldots, X_n)$
with  $\ell$ prime such that $\ell \mid m$, 
and 
with $F \in {\mathcal{O}}_K[X_0, \ldots, X_n]$ homogeneous of degree 
$m \geq 1$ 
and
having the property that the hypersurface 
in $\P^n_{\overline{K}}$ defined by
$F(X_0, \ldots, X_n) = 0$
is smooth.
Moreover, recall that 
$\eta : \Y \longrightarrow \X$
is the cyclic map of degree $\ell$ defined over $\mathbb{A}_{\overline{K}}^n$ by
$\eta(x_0, x_1, \ldots, x_n, x_{n+1}) := (x_0, x_1, \ldots, x_n)$.

Note that each element $[y]$ of $\Y$  is of the form 
$[y] = [x_0 : \ldots : x_n : x_{n+1}]$,
where
$[x_0 : \ldots x_n] \in \P^n_K$
and
where
$x_{n+1} \in K$ satisfies the equation
$$
x_{n+1}^{\ell} = F(x_0, \ldots, x_n).
$$
In order to bound the number of projective solutions, we will think of the above equation  as an affine model.
As such, it is equally natural to restate the result of Theorem \ref{FcnFSerreConj} as:
\begin{theorem}\label{main-application}
\noindent
Let $q$ be an odd rational prime power, $n \geq 2$ an integer, 
$\ell \geq 2$  a rational prime, 
and 
$F \in \F_q[T][X_0, \ldots, X_n]$ a homogeneous polynomial of degree $m \geq 2$ in $X_0, \ldots, X_n$, with $\car\F_q \nmid m.$
Assume the conditions:
\begin{enumerate}
\item[(i)]
$\ell \mid \gcd(m, q-1)$;
\item[(ii)]
$F(X_0,\ldots,X_n)=0$ defines a nonsingular projective hypersurface in $\mathbb{P}^n_{\overline{\mathbb{F}_q(T)}}$.
\end{enumerate}
For every $b>0$, let 
$ M_n(F;b)$
denote the cardinality of the set 
\[ \left\{
(x_0,   \ldots, x_n) \in \F_q[T]^{n+1}:
\;
\deg_T (x_i)  < b \; \forall 0 \leq i \leq n,
\exists
 x_{n+1} \in \F_q[T]
 \;
 \text{such that}
 \;
 x_{n+1}^{\ell} = F(x_0, \ldots, x_n)
\right\}.\]
Then for any $b \geq 1$
\[ M_n(F;b) \ll_{\ell, m, n, q, F} 
  \left(q^b\right)^{(n+1)  -  \frac{n+1}{n+2}}b^{\frac{n+1}{n+2}},\]
in which the implicit constant depends on $\ell, m,n,q$ and $F$.
\end{theorem}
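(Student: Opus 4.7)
The strategy is to apply the geometric sieve for prime degree cyclic covers (Theorem \ref{general-sieve-cyclic1}) to the data $K = \F_q(T)$, $\cal{O} = \F_q[T]$, $\X = \P^n_K$, $\Y$ the cyclic cover $X_{n+1}^\ell = F$, and $\eta$ the natural projection. For the sieving set I would take $\mathcal{P}$ to be the collection of monic irreducible polynomials of degree exactly $D$ in $\F_q[T]$, with the finite exceptional set $\Pcal_{\mathrm{exc}}(F)$ of \eqref{def-P-exc} removed, for an integer parameter $D$ to be optimized at the end. The prime polynomial theorem for $\F_q[T]$ gives $|\mathcal{P}| \asymp q^D/D$, and it is immediate that $|\mathcal{A}| = q^{b(n+1)}$.

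The main and ramified sieve terms of \eqref{geo_sieve_intro} are handled directly. The main term contributes $(\ell - 1)^2 |\mathcal{A}|/|\mathcal{P}| \ll_\ell b\, q^{b(n + 1) - D}$. For the ramified term, recall that for $\pi \notin \Pcal_{\mathrm{exc}}(F)$ the reduction $\eta_\pi$ is ramified at $\underline{x} \bmod \pi$ precisely when $\pi \mid F(\underline{x})$. I would separate the contribution from those $\underline{x} \in \mathcal{A}$ with $F(\underline{x}) = 0$, which is at most $|\mathcal{P}|$ times the number of $\F_q[T]$-points on the smooth hypersurface $\{F = 0\}$ of bounded height, estimated by $\ll_F q^{bn}$; for the remaining $\underline{x}$, $F(\underline{x}) \in \F_q[T]$ has degree $\ll_F mb$ and thus at most $\ll mb/D$ prime divisors of degree $D$. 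Altogether the ramified sieve term is $\ll_F m b^2\, q^{b(n + 1) - D} + q^{bn}$.

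The heart of the proof is the unramified sieve term. Since $\ell \mid q - 1$, each residue field $k_{v_i}$ supports a multiplicative character $\chi_{v_i}$ of exact order $\ell$, and for $\underline{x}$ with $F(\underline{x}) \not\equiv 0 \pmod{v_i}$ one has $\Psi_{v_i}(F(\underline{x})) = \sum_{k = 1}^{\ell - 1} \chi_{v_i}^k(F(\underline{x}))$. After expanding the product, the task reduces to bounding, uniformly in distinct $v_1, v_2 \in \mathcal{P}$ and $(k_1, k_2) \in \{1, \ldots, \ell - 1\}^2$, the mixed character sum
\[
\ds\sum_{\underline{x} \in \mathcal{A}} \chi_{v_1}^{k_1}(F(\underline{x}))\, \chi_{v_2}^{k_2}(F(\underline{x})).
\]
For the optimal choice of $D$ we will have $2D > b$, so the sum is incomplete modulo $N := v_1 v_2$, and I would expand the indicator of $\mathcal{A}$ into Fourier series against the additive character $\psi_\infty$ of \eqref{add_char} to convert the expression into a combination of complete mixed (multiplicative-additive) character sums modulo $N$. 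By the Chinese Remainder Theorem these factor over $v_1$ and $v_2$, and the Dwork-regularity of $F$, which persists after reduction by Lemma \ref{lemma_DR} and which passes to Deligne polynomials on affine coordinate slices by Lemma \ref{lemma_slice_Deligne}, supplies the hypotheses for the Weil-Deligne bounds of \S \ref{sec_Weil_Deligne}. These yield square-root savings $\ll_{m, n} q^{D(n + 1)/2}$ per prime, and summation of the Fourier coefficients produces an unramified contribution of size $\ll_{m, n, \ell, F} b\, q^{(b - D)(n + 1)}$ or smaller.

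Choosing $D = \lceil bn/(n + 1)\rceil$ balances the main and ramified sieve terms against the target while rendering the unramified contribution of negligible order $\approx q^b$, yielding $M_n(F; b) \ll_{\ell, m, n, q, F} b^2\, q^{b((n + 1) - n/(n + 1))}$ as required. The principal obstacle is the Weil-Deligne analysis: extracting full square-root cancellation uniformly in $v_1, v_2$, handling the Fourier expansion of the incomplete box sum over $K_\infty$, and keeping track of constants depending on $F$ are what occupy the remaining sections \S \ref{sec_duals_reductions}--\S \ref{sec_WD_app}.
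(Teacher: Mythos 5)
Your overall strategy matches the paper's: apply Theorem~\ref{general-sieve-cyclic1} over $K=\F_q(T)$, sieve with the monic irreducibles of a fixed degree $D\approx nb/(n+1)$ outside $\Pcal_{\mathrm{exc}}(F)$, bound the main and ramified sieve terms directly, and handle the unramified term by completion via additive characters followed by Weil--Deligne estimates. Your bounds on the main and ramified terms are essentially correct. The genuine gap is in the unramified sieve term.

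You assert a uniform square-root saving $|S_F(\underline{w},\chi_\pi)|\ll q^{D(n+1)/2}$ and conclude that the unramified contribution is of ``negligible order $\approx q^b$.'' Neither claim is correct. The bound $q^{D(n+1)/2}$ of Proposition~\ref{bound-complete-char-sum}(iii) holds only when $\underline{w}\not\equiv 0\pmod{\pi}$ and $\underline{w}\notin\mathcal{W}(F)^{*}_{\pi}$; when $\underline{w}$ lies on the reduced affine dual hypersurface, parts (i)--(ii) give only $q^{(n+2)D/2}$, a full factor $q^{D/2}$ worse. The paper therefore partitions the Fourier modes $\underline{x}$ (with $\deg_T(\underline{x})<2D-b$) according to whether $\bar{\pi}_2\underline{x}$ and $\bar{\pi}_1\underline{x}$ lie on $\mathcal{W}(F)^*_{\pi_1}$, $\mathcal{W}(F)^*_{\pi_2}$, and counts the exceptional modes via Schwartz--Zippel (Lemma~\ref{lemma_Schwartz_Zippel}) applied to $F^*\pmod{\pi_i}$. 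The dominant ``bad--bad'' case yields $\ll q^{n(2D-b)}$ modes, each contributing $\ll q^{(n+2)D}$; after multiplying by the prefactor $q^{(n+1)(b-2D)}$, the unramified term is $\ll q^{nD+b}$ (Proposition~\ref{prop_unram}), not $q^b$. With $D\approx nb/(n+1)$ this equals $(q^b)^{(n+1)-n/(n+1)}$, the \emph{same} size as the main term, and the choice of $D$ is dictated by balancing precisely these two. Indeed, if the unramified term really were $\approx q^b$, the constraint $D<b<2D$ would permit $D$ close to $b$ and give a much stronger conclusion $\approx q^{bn}$; so your $D\approx nb/(n+1)$ is not self-consistent with your own estimate. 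The case analysis and point counting of \S\ref{sec_WD_app} are what close this gap, and they are not a routine afterthought.
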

 
More precisely the implicit constant depends on $F$ in terms of $\deg_T(F)$ and $\deg_T(F^*)$ as well as a finite exceptional set of primes determined by $F$; see \S \ref{sec_choices}. (Here, and in all that follows, given a polynomial $H \in \F_q[T][X_0,\ldots,X_n],$ $\deg_T H$ refers to the maximum degree in $T$ of any coefficient of $H$.)

\subsection{Choosing the sieving set}

We  prove Theorem \ref{main-application} (and hence Theorem \ref{FcnFSerreConj}) by means of the geometric sieve derived in Theorem \ref{general-sieve-cyclic1}.
To phrase our goal as a sieve problem, 
from here onwards, unless otherwise stated,
we keep the setting of Theorem \ref{main-application} 
and the notation of \S \ref{sec_FF}, 
and work with  the sets
\begin{eqnarray*}
{\mathcal{A}} 
&:=&
\left\{
\underline{x} = (x_0,   \ldots, x_n) \in {\cal{O}}_K^{n+1:}
\;
\deg_T (x_i)  < b \; \forall 0 \leq i \leq n\right\},
\\
{\mathcal{S}}_F({\mathcal{A}}) 
&:=&
\left\{
\underline{x} \in {\cal{A}}:
\;
\exists
 y \in {\cal{O}}_K
 \;
 \text{such that}
 \;
 y^{\ell} = F(\underline{x})
\right\}
\end{eqnarray*}
for a fixed arbitrary integer $ b > 0$.
Note that the trivial upper bound for $|\mathcal{S}_F(\mathcal{A})|$ is
\begin{equation}\label{card-A}
|\mathcal{A}| \leq q^{b(n+1)}.
\end{equation}
To improve upon this bound using Theorem \ref{general-sieve-cyclic1},
we seek a suitable sieving set ${\cal{P}}$ of primes $\pi$ of $K$,
which we describe below.

We consider the smooth projective hypersurface  
$$ W(F): \quad F(X_0, \ldots, X_n) = 0, $$
whose projective dual we denote by $W(F)^{\ast}$. 
By parts (i), (ii) of Proposition \ref{projective-geometric-lemma}, 
 there exists a homogeneous polynomial  
 $F^{\ast} \in {\cal{O}_K}[X_0, \ldots, X_n]$ ,
 absolutely irreducible over $K$,
that defines $W(F)^{\ast}$, that is,
 $$ W(F)^{\ast}: \quad F^{\ast}(X_0, \ldots, X_n) = 0.$$
 Recall that by part (iii) of Proposition \ref{projective-geometric-lemma}  
there exists  a finite set of finite primes
 \begin{equation}\label{P-exc-F}
 {\mathcal{P}}_{\text{exc}} = {\mathcal{P}}_{\text{exc}}(F)  
 \end{equation}
 satisfying   that proposition.

Upon fixing some positive integer  $\Delta $, 
 we define the sieving set of primes as
\begin{equation}\label{finite-set-primes}
{\mathcal{P}} :=
\{\pi \in 
{\cal{O}}_K: 
\deg_T (\pi) = \Delta, \pi \not\in {\mathcal{P}}_{\text{exc}}\}.
\end{equation}
Later on in \eqref{final-choice-Delta}, we will choose
  $\Delta = \Delta(n, b)$ optimally in terms of $n$ and $b$; 
 see also  the observation below \eqref{PNT-ff}.  

To understand the growth of $|{\cal{P}}|$, 
let us recall  the Prime Polynomial Theorem \cite[Thm. 2.2, p. 14]{Ro02}:
\begin{equation}\label{PNT-ff}
\left|
\#\{\pi \in 
{\cal{O}}_K: 
\deg_T (\pi) = \Delta\} - \frac{q^{\Delta}}{\Delta}
\right|
\leq
\frac{q^{\frac{\Delta}{2}}}{\Delta} + q^{\frac{\Delta}{3}}.
\end{equation}
We infer from \eqref{PNT-ff} that, 
upon taking $\Del \approx \lfloor (n+1)b/(n+2)\rfloor$ 
(see the precise choice in (\ref{final-choice-Delta})), we have that
\beq\label{card-P}
|\mathcal{P}|   
\geq 
\frac{q^\Delta}{\Delta}  
- 
\left(\frac{q^{\frac{\Delta}{2}}}{\Delta} 
+ 
q^{\frac{\Delta}{3}}\right) 
- 
|\mathcal{P}_{\text{exc}}| 
\geq  
\frac{q^{\Delta}}{2 \Delta}
\eeq
for all $b$ that are sufficiently large relative to 
$n$ and $|\Pcal_{\text{exc}}|$. 
We denote  the least such $b$ by
$$
b(n, q, F)
$$
and refer the reader to \S \ref{sec_choice_b}
for more details.
While we still allow $\Delta >0$ to be arbitrary,
from now on
we assume the   lower bound  (\ref{card-P}) for $|\Pcal|$.

Our task  for this particular sieve problem
is to  bound non-trivially the right-hand side of the 
resulting sieve inequality of Theorem \ref{general-sieve-cyclic1}.
Recall that there are three terms -- the main term, the ramified term, and the unramified term.
In this section, we bound the first two terms
quite easily,
and
in the remaining sections we focus on bounding the third term. 

We make  the notational convention that,
for $\underline{x} := (x_0, \ldots, x_n) \in \cal{O}_K^{n+1}$,
we write 
$$\deg_T (\underline{x}) < b$$
to mean 
$$\deg_T (x_i) < b \ \forall 0 \leq i \leq n.$$

\subsection{Upper bound for the main sieve term}

Assuming $\Del$ is chosen and $b$ is sufficiently large, relative to $n$ and $|\Pcal_{\text{exc}}(F)|$, so that (\ref{card-P}) holds 
(again, see \S \ref{sec_choice_b} for the explicit requirements on $b$),
then by (\ref{card-A}) and (\ref{card-P}), 
the first term on the right-hand side of (\ref{geo_sieve_intro}) is bounded above by 
\beq\label{first_term}
\frac{|\mathcal{A}|}{|\mathcal{P}|} (\ell-1)^2 
\leq
2 \Delta q^{b(n+1) - \Delta} (\ell - 1)^2
\ll_{\ell}
\Delta q^{b(n+1) - \Delta}.
\eeq

\subsection{Upper bound for the ramified sieve term}\label{sec_ramified}
For  
the second term on the right-hand side of (\ref{geo_sieve_intro}), 
note that
for each  prime $\pi \in {\cal{O}}_K$ 
of good reduction for $\eta$,
the condition that $\eta_\pi$ is ramified at $\underline{x} \, (\mod \pi)$ 
 means 
 that
 the  extension of residue fields defined by $\underline{x} \, (\mod \pi)$  is inseparable, 
 which cannot happen in our case since the residue fields are finite,
 or
 that 
  $\eta_\pi$ has a ramification index at $(\underline{x},y)\, (\mod \pi)$ larger than 1, which is equivalent to 
$y \, (\mod \pi)$ being zero.
Then, 
\beq\label{general-sieve-cyclic-inequality-application_0}
 \frac{1}{\left|{\mathcal{P}}\right|}
\ds\sum_{\underline{x} \in {\mathcal{A}}}
|V_{\mathcal{P}}^{\mathrm{ram}}(\underline{x})| = \frac{1}{|{\mathcal{P}}|}
\ds\sum_{\substack{
\underline{x} \in \cal{O}_K^{n+1} 
\\
\deg_T (\underline{x}) < b
}
}
\#\left\{
\pi \in {\cal{O}_K}: 
\deg_T (\pi) = \Delta, 
\pi \not\in {\mathcal{P}}_{\text{exc}},
\;  F(\underline{x}) \equiv 0 \, (\mod \pi) 
\right\}.
\eeq

We will bound each term according to whether $F(\underline{x})\neq 0$ or $F(\underline{x}) = 0$. First we observe that for any nonconstant polynomial 
$u \in \cal{O}_K$
we have
$$
\#\left\{
\pi \in \cal{O}_K: 
\pi \mid u
\right\}
\leq \deg_T (u).
$$
We also restrict the degree of $\pi$, so more precisely, let $\omega_\Delta(G)$ denote the number of distinct primes $\pi$ of degree $\Delta$ that divide a fixed $G(T)\in \F_q[T]$. Then
\[\Big(\prod_{\substack{\pi \mid G\\\deg_T(\pi)=\Delta}} \pi\Big) \mid G.\]
Taking norms, we have
$q^{\Delta \omega_\Delta(G)} \leq |G|$
and therefore, 
\[\omega_\Delta(G) \leq \frac{\deg_T(G)}{\Delta}.\]
Thus, after we fix 
$\underline{x} \in \cal{O}_K^{n+1}$ 
with $\deg_T (\underline{x})  < b$ and $F(\underline{x}) \neq 0$, 
\begin{eqnarray*}
\#\left\{
\pi \in \cal{O}_K: 
\deg_T(\pi) = \Delta, \pi \not\in {\mathcal{P}}_{\text{exc}},
\;  F(\underline{x}) \equiv 0 \, (\mod \pi) 
\right\} 
&\leq& 
\frac{1}{\Delta}\deg_T(F(\underline{x}))  \\
& \leq & \frac{1}{\Delta}(\deg_T(F) + \deg_{\underline{X}}(F) \cdot \deg_T(\underline{x}))\\
& = & \frac{1}{\Delta} (\deg_T(F) + mb).
\end{eqnarray*}
Here
we emphasize, 
$m = \deg_{\underline{X}}(F)$ 
is the degree of 
$F \in \cal{O}_K[X_0,\ldots,X_n] = \F_q[T, X_0,\ldots,X_n]$ 
in $X_0,\ldots, X_n$ 
and 
$\deg_T (F)$ is the maximum degree of (any coefficient of) $F$ in $T$.

Applying the above observations to each summand with $F(\underline{x}) \neq 0$ on the right-hand side of (\ref{general-sieve-cyclic-inequality-application_0}), 
trivially counting the number of $\underline{x}$ in the sum, 
and applying (\ref{card-P}) to bound $1/|\Pcal|$ from above,
we deduce that this contribution to
the ramified sieve term is
\[
  \ll_{m,n, \deg_T (F),  } 
  b  q^{b(n+1) - \Delta}  ,\]
  for all  $b \geq b(n, q, F)$.

 On the other hand, the contribution to (\ref{general-sieve-cyclic-inequality-application_0}) from those $\underline{x}$ such that $F(\underline{x}) =0$ is 
 \[ \frac{|\Pcal|}{|\Pcal|}\# \{ \underline{x} \in \mathcal{O}_K^{n+1}:  \deg_T(\underline{x})<b, F(\underline{x}) =0 \} \leq 
\deg_{\underline{X}}(F) q^{bn}, \]
 by the trivial bound (see  e.g. Lemma \ref{lemma_Schwartz_Zippel}, which we include at the end of the paper for completeness).
 Thus in total the ramified sieve term is 
 \[
  \ll_{m,n, \deg_T (F)} 
  b   q^{b(n+1) - \Delta} + q^{bn} ,\]
  for all  $b \geq b(n, q, F)$.
\subsection{Remaining work}

So far we have proved the upper bound 
\begin{multline}\label{general-sieve-cyclic-inequality-application_goal}
\left|{\mathcal{S}}_F({\mathcal{A}})\right|
\ll_{m,n, \deg_T (F)}
 b  q^{b (n+1)  - \Delta}
 +q^{bn}
\\
+
\frac{1}{\left|{\mathcal{P}}\right|^2}
\ds\sum_{\substack{
\pi_1, \pi_2 \in {\mathcal{P}}
\\ \pi_1 \neq \pi_2}
}
\left|
\ds\sum_{\substack{
\underline{x} \in \cal{O}_K^{n+1}
\\
\deg_T (\underline{x}) < b 
\\
F(\underline{x}) \not\equiv 0 \, (\mod \pi_1 \pi_2)
}
}
\left(
\left|
\eta_{\pi_1}^{-1}(\underline{x} \, (\mod \pi_1))
\right|
-
1
\right)
\left(
\left|
\eta_{\pi_2}^{-1}(\underline{x} \, (\mod \pi_2))
\right|
-
1
\right)
\right|.
\end{multline}
Recall that our goal is to prove an  upper bound for $\mathcal{S}_F(\mathcal{A})$ that improves on the trivial upper bound  $q^{b(n+1)}$  recorded in \eqref{card-A}.
The first term on the right-hand side of (\ref{general-sieve-cyclic-inequality-application_goal}) will be nontrivial as long as $\Delta>0$. 
Since the trivial upper bound for the unramified sieve term, the last term on the right-hand side of \eqref{general-sieve-cyclic-inequality-application_goal}, 
is $\ll q^{b(n+1)}$, 
which is as large as the aforementioned trivial bound \eqref{card-A},
we seek any bound for the unramified sieve term
that improves upon this. 
Ultimately, we will choose $\Delta$ to balance our upper-estimates for
the   terms on the right-hand side of \eqref{general-sieve-cyclic-inequality-application_goal}.

 To tackle the unramified sieve term,
we will break our treatment 
into two main steps: 
in \S \ref{sec_strategy_sums}, we expand the term into a sum of complete character sums; 
in \S \ref{sec_WD_app}, we apply 
Weil-Deligne bounds (proved in \S \ref{sec_Weil_Deligne}) to each of these sums, 
and 
then use point-counting results  to average over $\pi_1,\pi_2 \in \mathcal{P}$, and 
  sum up the resulting Weil-Deligne bounds.

\section{Expansion of unramified sieve term as a mixed character sum}\label{sec_strategy_sums}

In this section we recall the basic notions of Fourier analysis in our function field setting
 and use them to  expand the unramified sieve term as a mixed character sum.
For ease of reference, we first record the main outcome of this section and then proceed with rigorous definitions and derivations.

For any 
element
$\underline{w} \in
\cal{O}_K^{n+1}
$,
prime $\pi \in 
\cal{O}_K
$, 
and 
non-principal multiplicative character
$\chi_{\pi}$ 
of $\cal{O}_K$, 
of modulus $\pi$,  
we define a mixed character sum 
relative
to  a polynomial
$G \in \cal{O}_K[X_0, \ldots, X_n]
$ 
by
\beq\label{SG_sum_dfn}
S_G(\underline{w}, \chi_{\pi})
:=
\ds\sum_{
\underline{a} \, (\mod \pi) \in k_\pi^{n+1}
}
\chi_{\pi} (G(\underline{a}))
\;
\psi_{\infty} \left(- \frac{\underline{w} \cdot \underline{a}}{\pi}\right),
\eeq
where the additive character $\psi_\infty(\cdot/\pi)$ is  defined in 
(\ref{add_char}).

\begin{proposition}\label{prop_expand}
Let $q$ be an odd rational prime power, 
$n \geq 2$ an integer, 
$\ell \geq 2$  a rational prime, 
and 
$F \in \cal{O}_K[X_0, \ldots, X_n]$ a homogeneous polynomial of degree $m \geq 2$ in $X_0, \ldots, X_n$, with $\mathrm{char} K \nmid m$,
where, as before, $K = \F_q(T)$.
Assume  
$\ell \mid \gcd(m, q-1)$.
Let $b, \Del > 0$ be integers and
assume that 
\begin{equation*}\label{b_Del-initial}
b< 2\Del.
\end{equation*}
Defining $\Pcal$ as in (\ref{finite-set-primes}),
for all primes 
$\pi_1, \pi_2 \in \Pcal$ with $\pi_1 \neq \pi_2$,
the unramified sieve term can be expanded as
\begin{multline}\label{identity-complete1-unram}
\ds\sum_{\substack{
\underline{x} \in \cal{O}_K^{n+1}
\\
\deg_T (\underline{x}) < b 
\\
F(\underline{x}) \not\equiv 0 \, (\mod \pi_1 \pi_2)
}
}
\left(
\left|
\eta_{\pi_1}^{-1}(\underline{x} \, (\mod \pi_1))
\right|
-
1
\right)
\left(
\left|
\eta_{\pi_2}^{-1}(\underline{x} \, (\mod \pi_2))
\right|
-
1
\right)
\\
=
q^{-(n+1) (2\Del-b)}
\ds\sum_{\substack{
\chi_{\pi_1} \neq \chi_0
\\
\chi_{\pi_2} \neq \chi_0
}
}
\ds\sum_{\substack{
\underline{x} \in \cal{O}_K^{n+1}
\\
\deg_T (\underline{x}) < 2\Delta - b
}
}
S_F \left(\bar{\pi}_2 \underline{x}, \chi_{\pi_1}\right)
S_F \left(\bar{\pi}_1 \underline{x}, \chi_{\pi_2}\right),
\end{multline}
in which, for each $\pi_i$, the sum is over all non-principal characters $\chi_{\pi_i}$ of order $\ell$. 
\end{proposition}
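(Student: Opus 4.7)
The plan is to rewrite the unramified sieve term as a mixed character sum via two successive uses of orthogonality: first a multiplicative one to extract the $\chi_{\pi_i}$'s, then an additive one (Fourier inversion modulo $N := \pi_1\pi_2$) to convert the truncated $\underline{x}$-sum into the dual sum on the right-hand side of \eqref{identity-complete1-unram}. Since $\ell \mid q-1$ ensures that each $k_{\pi_i}^{\times}$ carries exactly $\ell-1$ nontrivial multiplicative characters of order $\ell$, I would first verify (by distinguishing the three cases $F(\underline{x}) \equiv 0$, $F(\underline{x})$ a nonzero $\ell$-th power residue, and $F(\underline{x})$ a nonzero non-residue modulo $\pi$) the elementary identity
\[
\bigl|\eta_\pi^{-1}(\underline{x}\,(\bmod\, \pi))\bigr| - 1 \;=\; \sum_{\chi_\pi^\ell = \chi_0,\; \chi_\pi \neq \chi_0} \chi_\pi(F(\underline{x}))
\]
valid for every prime $\pi \in \Pcal$ of good reduction and every $\underline{x} \in \cal{O}_K^{n+1}$. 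Substituting this into both factors of the product $(|\eta_{\pi_1}^{-1}|-1)(|\eta_{\pi_2}^{-1}|-1)$, and using that $\chi_{\pi_i}(F(\underline{x})) = 0$ whenever $F(\underline{x}) \equiv 0 \pmod{\pi_i}$, one may freely enlarge the outer summation from $\{\underline{x}:\deg_T(\underline{x}) < b,\; F(\underline{x}) \not\equiv 0\,(\bmod\, \pi_1\pi_2)\}$ to all of $\{\deg_T(\underline{x}) < b\}$, then swap the $(\chi_{\pi_1}, \chi_{\pi_2})$- and $\underline{x}$-sums.

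After this step, for each fixed pair of characters, the task reduces to computing $\sum_{\deg_T(\underline{x}) < b} f(\underline{x})$, where $f(\underline{x}) := \chi_{\pi_1}(F(\underline{x})) \chi_{\pi_2}(F(\underline{x}))$ is periodic modulo $N$. I would apply Fourier inversion on $(\cal{O}_K/N)^{n+1}$ using the additive character $\psi_\infty$:
\[
f(\underline{x}) \;=\; q^{-(n+1)\deg_T(N)} \sum_{\underline{w}\,(\bmod\, N)} \widehat f(\underline{w}) \; \psi_\infty\!\left(\frac{\underline{w}\cdot\underline{x}}{N}\right), \qquad \widehat f(\underline{w}) \;=\; \sum_{\underline{c}\,(\bmod\, N)} f(\underline{c})\,\psi_\infty\!\left(-\frac{\underline{w}\cdot\underline{c}}{N}\right).
\]
Using the partial fraction identity $1/(\pi_1\pi_2) \equiv \bar\pi_2/\pi_1 + \bar\pi_1/\pi_2 \pmod{\cal{O}_K}$, where $\bar\pi_i \pi_i \equiv 1 \pmod{\pi_{3-i}}$, together with the Chinese Remainder isomorphism $\cal{O}_K/N \cong \cal{O}_K/\pi_1 \times \cal{O}_K/\pi_2$, the Fourier coefficient factorizes precisely as $\widehat f(\underline{w}) = S_F(\bar\pi_2 \underline{w}, \chi_{\pi_1}) \cdot S_F(\bar\pi_1 \underline{w}, \chi_{\pi_2})$, in the notation of \eqref{SG_sum_dfn}; the minus sign in the definition of $S_F$ matches that placed on $\widehat f$.

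What remains is to evaluate, after exchanging the $\underline{x}$- and $\underline{w}$-summations, the inner truncated additive sum $\sum_{\deg_T(\underline{x}) < b} \psi_\infty(\underline{w}\cdot\underline{x}/N)$, which factors coordinate-wise. Each one-variable factor is, by orthogonality applied to the finite $\F_q$-subspace $\{x:\deg_T(x) < b\}$ of $K_\infty$, either $q^b$ (when the homomorphism $x \mapsto \psi_\infty(w_i x/N)$ is trivial on this subspace) or $0$. Expanding $w_i/N$ as a power series in $T^{-1}$ and invoking non-degeneracy of $\Tr_{\F_q/\F_p}$, triviality is equivalent to the vanishing of the coefficients of $T^{-1},\ldots,T^{-b}$ in $w_i/N$, i.e.\ to $\deg_T(w_i) < \deg_T(N) - b = 2\Delta - b$. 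This is exactly where the hypothesis $b < 2\Delta$ enters the proof: without it, this support would be empty. Identifying the surviving residues with $\{\underline{w} \in \cal{O}_K^{n+1} : \deg_T(\underline{w}) < \deg_T(\pi_1\pi_2) - b\}$ and balancing the factor $q^{b(n+1)}$ against the $q^{-(n+1)\deg_T(N)}$ from Fourier inversion yields the claimed prefactor $q^{(n+1)(b-\deg_T(\pi_1\pi_2))}$. The main technical hurdle is the bookkeeping in the CRT/partial-fraction step that pinpoints the arguments $\bar\pi_2\underline{w}$ and $\bar\pi_1\underline{w}$, together with the non-degeneracy-of-trace argument required to pin down the support condition; both are elementary but sensitive to the particular $|\cdot|_\infty$-based conventions set in \S\ref{sec_FF}.
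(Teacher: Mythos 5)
Your proposal is correct and follows the same overall strategy as the paper's: use the multiplicative orthogonality identity (the paper's Proposition~\ref{move-to-char-sum}) to write $|\eta_{\pi_i}^{-1}(\underline{x}\,(\bmod\,\pi_i))|-1 = \sum_{\chi_{\pi_i}\neq\chi_0}\chi_{\pi_i}(F(\underline{x}))$ over nonprincipal order-$\ell$ characters, observe that the vanishing convention $\chi_{\pi_i}(0)=0$ lets you drop the restriction $F(\underline{x})\not\equiv 0\,(\bmod\,\pi_1\pi_2)$ for free, swap the character and $\underline{x}$ sums, and then pass from the incomplete $\underline{x}$-sum to the complete dual sum via additive-character Fourier analysis modulo $\pi_1\pi_2$ combined with CRT and partial fractions. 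The one place where your route departs from the paper's is the execution of the Fourier step: the paper isolates this in Lemma~\ref{lemma_count_mod}, which counts $\underline{x}$ with $\deg_T(\underline{x})<b$ in a fixed residue class modulo $u=\pi_1\pi_2$ using integrals on $K_\infty^{n+1}$ borrowed from~\cite{BrVi15}, whereas you perform finite Fourier inversion directly on $(\mathcal{O}_K/\pi_1\pi_2)^{n+1}$ and then evaluate the resulting box sum $\sum_{\deg_T(\underline{x})<b}\psi_\infty\bigl(\underline{w}\cdot\underline{x}/(\pi_1\pi_2)\bigr)$ by character orthogonality on the finite $\F_q$-vector space $\{x:\deg_T(x)<b\}$ together with non-degeneracy of $\Tr_{\F_q/\F_p}$. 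Your version is self-contained and purely finitary (no reference to measures on $K_\infty$), while the paper's is more modular since Lemma~\ref{lemma_count_mod} is cleanly separated and reusable; both yield the identical prefactor $q^{(n+1)(b-\deg_T(\pi_1\pi_2))}$, the same support condition $\deg_T(\underline{w})<\deg_T(\pi_1\pi_2)-b$, and the same reading of the arguments $\bar\pi_2\underline{w},\bar\pi_1\underline{w}$, and you have correctly located where the hypothesis $b<2\Delta$ is used.
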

For later reference, we remark that the left-hand side of (\ref{identity-complete1-unram}) is unchanged if we omit the condition $F(\underline{x}) \not\equiv 0 \, (\mod \pi_1 \pi_2)$. Indeed, upon observing that 
\[\eta_{\pi_i}^{-1}(\underline{x} \, (\mod \pi_i))
     = \{ z \in k_{\pi_i} : z^m = F(\underline{x})\},\]
     it follows that 
 $(|\eta_{\pi_i}^{-1}(\underline{x} \, (\mod \pi_i))|-1)=0$ whenever $F(\underline{x})=0$.
 
We note that for any polynomial $F \in \cal{O}_K[X_0, \ldots, X_n]$, 
the trivial bound for $S_F(\underline{w}, \chi_{\pi})$, 
valid for every prime $\pi \in \cal{O}_K$ 
and 
every $\underline{w} \in \cal{O}_K^{n+1}$,  
is 
\beq\label{SH_triv}
\left|
S_F(\underline{w}, \chi_{\pi}) 
\right|
\leq 
|\pi|_\infty^{n+1} 
= 
q^{(n+1) \deg_T (\pi)}.
\eeq
(Of course, this is far from sharp.) 
We then see that for all primes $\pi_1, \pi_2 \in \Pcal$
with $\pi_1 \neq \pi_2$,
the trivial bound for the right-hand side of (\ref{identity-complete1-unram}) is 
$$\ll \ell^2 q^{(n+1)2\Delta},$$
which is larger than the trivial bound for the left-hand side whenever $b \leq 2\Delta$,
as will occur in our ultimate choice for $\Del$. The transformation is nevertheless worthwhile, since we have passed from an incomplete (multiplicative) character sum on the left-hand side to a sum of complete (mixed) character sums on the right-hand side, to which we can apply Weil-Deligne bounds.

We will not try to average nontrivially 
over the characters $\chi_{\pi_i}$ 
or 
over $\underline{x}$; 
for our current scope, 
it will suffice to prove a nontrivial bound for the individual sums $S_F$, which we will return to in \S \ref{sec_Weil_Deligne}.

\subsection{Expansion in terms of multiplicative characters}
We will now rewrite the unramified sieve term in (\ref{general-sieve-cyclic-inequality-application_goal}) in terms of characters.
Let us fix  primes $\pi_1, \pi_2 \in \cal{O}_K$ 
with $\pi_1 \neq \pi_2$ (the condition $\pi_1, \pi_2 \in {\cal{P}}$ need only be specified later).
For any $\underline{x} \in \cal{O}_K^{n+1}$,
we will rewrite each of the quantities
$(\left|
\eta_{\pi_i}^{-1}(\underline{x} \, (\mod \pi_i))
\right|-1)$ as a character sum by using the following proposition, whose proof we defer to Section \ref{sec_char_proof}.

\begin{proposition}\label{move-to-char-sum}
Let $q$ be an odd rational prime power and, as before, take 
$K = \F_q(T)$.
Fix a rational prime $\ell$ with $\ell \mid (q-1)$.
For any prime $\pi \in \cal{O}_K$
and any  $a \in \cal{O}_K$,
we have
$$
\#\left\{
y \, (\mod \pi) \in k_{\pi}:
a \equiv y^{\ell} \, (\mod \pi)
\right\}
=
\ds\sum_{\chi_{\pi}} \chi_{\pi}(a),
$$
where $\chi_{\pi}$ runs over all multiplicative characters on $\cal{O}_K$ of modulus $\pi$ and order  dividing $\ell$,
whose definition is extended from
$k_{\pi}^{\ast}$ to $\cal{O}_K$ via the rule
\begin{equation}\label{extend-char-zero}
\forall x \in \cal{O}_K  \; \text{with} \; \pi \mid x,
\; \;
\chi_{\pi}(x) := 
 \left\{
 \begin{array}{cl}
 0 & \text{if $\chi_{\pi}$ is non-principal,}
 \\
 1 & \text{if $\chi_{\pi}$ is principal.}
\end{array}
\right.
\end{equation}
\end{proposition}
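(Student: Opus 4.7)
The plan is to reduce the statement to standard character orthogonality on the finite cyclic group $k_\pi^\ast$. Since $k_\pi = \F_q[T]/(\pi)$ is a finite field of order $q^{\deg_T \pi}$, its unit group $k_\pi^\ast$ is cyclic of order $q^{\deg_T \pi} - 1$. The hypothesis $\ell \mid (q-1)$ together with $(q-1) \mid (q^{\deg_T \pi} - 1)$ gives $\ell \mid |k_\pi^\ast|$, so the subgroup $H := \{\chi : \chi^\ell = \chi_0\}$ of the dual group has exactly $\ell$ elements, and the $\ell$-th power map on $k_\pi^\ast$ is an $\ell$-to-$1$ homomorphism onto the subgroup $(k_\pi^\ast)^\ell$ of index $\ell$.

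I would then split into two cases according to whether $\pi \mid a$. If $\pi \mid a$, the left-hand side counts solutions to $y^\ell \equiv 0 \pmod{\pi}$, which are just the representatives of the class $y \equiv 0 \pmod{\pi}$, giving count $1$. For the right-hand side, the extension rule \eqref{extend-char-zero} makes every non-principal $\chi_\pi$ in the sum vanish at $a$, while the principal character contributes $1$; the two sides agree.

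If $\pi \nmid a$, then no $y$ with $y \equiv 0 \pmod{\pi}$ contributes, so the left-hand side equals $|\{y \in k_\pi^\ast : y^\ell = \bar a\}|$, which is $\ell$ if $\bar a \in (k_\pi^\ast)^\ell$ and $0$ otherwise. On the right-hand side, I would invoke the standard orthogonality relation
\[
\sum_{\chi \in H} \chi(\bar a) \;=\; \begin{cases} |H| & \text{if } \bar a \in (k_\pi^\ast)^\ell, \\ 0 & \text{otherwise,}\end{cases}
\]
which follows because $H$ is precisely the annihilator of $(k_\pi^\ast)^\ell$ in the dual of $k_\pi^\ast$ (equivalently, $H$ is the dual of the quotient $k_\pi^\ast/(k_\pi^\ast)^\ell$, which has order $\ell$). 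Since $|H| = \ell$, the two sides again match.

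There is no real obstacle: once one has correctly identified the group $H$ of characters of order dividing $\ell$ and its size (this is where the hypothesis $\ell \mid (q-1)$ enters crucially, ensuring $H$ has the right order $\ell$ regardless of $\deg_T \pi$), the proposition is a direct application of orthogonality combined with the explicit convention \eqref{extend-char-zero} for extending $\chi_\pi$ to all of $\mathcal{O}_K$. The only point worth stating carefully is that the convention on $\chi_\pi(0)$ is chosen precisely so that both cases $\pi \mid a$ and $\pi \nmid a$ are handled by a single formula; verifying this consistency is the content of the case $\pi \mid a$ above.
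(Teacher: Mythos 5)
Your proof is correct. It differs from the paper's in presentation rather than in mathematical substance: you invoke the abstract orthogonality relation for the subgroup $H$ of characters annihilating $(k_\pi^\ast)^\ell$, identifying $H$ with the dual of the order-$\ell$ quotient $k_\pi^\ast/(k_\pi^\ast)^\ell$, whereas the paper constructs the $\ell$ characters in $H$ explicitly as the powers $\chi_{\pi,i}(x) = \theta\bigl(x^{(q^{\deg_T\pi}-1)/\ell}\bigr)^i$ of a generator built from the $\ell$-th power residue symbol and a fixed isomorphism $\theta\colon \mu_\ell(\F_q)\to\mu_\ell(\C)$, and then sums the resulting geometric progression. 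Your route is cleaner and more conceptual, and you are right to emphasize that $\ell\mid(q-1)$ is what forces $\ell\mid(q^{\deg_T\pi}-1)$ uniformly in $\pi$, hence $|H|=\ell$ for every prime; the paper's explicit construction buys a ready-made link to the $\ell$-th power residue symbol, which it has just defined and which fits the classical function-field framework it cites (Rosen), but this is not needed elsewhere in the argument. One small point of care, which you handle correctly: the principal character must be included with the convention $\chi_0(0)=1$ so that the $\pi\mid a$ case produces $1$ rather than $0$; the paper achieves the same thing via the $0^0:=1$ convention.
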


By Proposition \ref{move-to-char-sum}, for each of $\pi_1,\pi_2$ and any 
$\underline{x} \in \cal{O}_K^{n+1}$,

\begin{eqnarray}\label{eta-becomes-chi}
\left|
\eta_{\pi_i}^{-1} (\underline{x}\, (\mod \pi_i))
\right|
&=&
\#\left\{
y \, (\mod \pi_i) \in k_{\pi_i}:
F(x_0, \ldots, x_n) \equiv y^{\ell} \, (\mod \pi_i)
\right\}
\nonumber
\\
&=&
\ds\sum_{\chi_{\pi_i}} \chi_{\pi_i}(F(x_0, \ldots, x_n)),
\end{eqnarray}
where $\chi_{\pi_i}$ runs over all multiplicative characters on $\cal{O}_K$ of modulus $\pi_i$ and order dividing $\ell$,
extended to $\cal{O}_K$ as in \eqref{extend-char-zero}.
From this, we can write
\beq\label{eta-becomes-chi-1}
\left|
\eta_{\pi_i}^{-1} (\underline{x}\, (\mod \pi_i))  \right| -1 = \ds\sum_{\chi_{\pi_i}\not = \chi_0} \chi_{\pi_i}(F(x_0, \ldots, x_n)),
\eeq
in which the sum is now over all (non-principal) characters of order $\ell$ and modulus $\pi_{i}$.

\subsection{Fourier expansion in terms of additive characters}
Next, we perform a Fourier expansion in terms of the additive character defined in Section \ref{sec_FF}.
We defer the proof of the required proposition to Section \ref{sec_proof_comp}.

\begin{proposition}\label{move-to-complete-char-sum}
Let $q$ be an odd rational prime power and, as before, take 
$K = \F_q(T)$.
For all primes $\pi, \pi' \in \cal{O}_K$ with $\pi \neq \pi'$,  
for all $\chi_{\pi}, \chi_{\pi'}$ non-principal multiplicative characters of $\cal{O}_K$ of moduli $\pi$, $\pi'$ (respectively),
for all $G \in \cal{O}_K[X_0, \ldots, X_n]$,
and 
for all integers $b$ such that
$$0<b< \deg_T (\pi \pi'),$$
we have
\begin{eqnarray*}
\ds\sum_{\substack{
\underline{x} \in \cal{O}_K^{n+1}
\\
\deg_T (\underline{x}) < b
}
}
\chi_{\pi}(G(\underline{x}))
\chi_{\pi'}(G(\underline{x}))
=
q^{-(n+1) (\deg_T (\pi \pi')-b)}
 \ds\sum_{\substack{
 \underline{x} \in \cal{O}_K^{n+1}
\\
 \deg_T (\underline{x}) < \deg_T (\pi \pi') - b
 }
 }
S_G\left(\bar{\pi}' \underline{x}, \chi_{\pi} \right)
 \;
S_G\left(\bar{\pi} \underline{x}, \chi_{\pi'} \right),
\end{eqnarray*}
where $\bar{\pi}', \bar{\pi}' \in \cal{O}_K$ are determined  by the congruences
$
\pi \bar{\pi} \equiv 1 \, (\mod \pi'),
$
$
\pi' \bar{\pi}' \equiv 1 \, (\mod \pi),
$
and the sum $S_G(\underline{w}, \chi_{\pi})$ is defined in (\ref{SG_sum_dfn}).
\end{proposition}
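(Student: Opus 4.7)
The plan is to carry out a function-field Fourier duality argument, transforming the incomplete multiplicative character sum on the left-hand side into a sum of complete mixed character sums at the Pontryagin dual ``box.'' Set $N := \deg_T(\pi\pi')$, and observe that since $\gcd(\pi,\pi')=1$, the Chinese Remainder Theorem gives $\mathcal{O}_K/\pi\pi' \cong \mathcal{O}_K/\pi \times \mathcal{O}_K/\pi'$, so that the product $\chi_\pi(G(\underline{x}))\chi_{\pi'}(G(\underline{x}))$ is a well-defined function of $\underline{x} \, (\mathrm{mod}\, \pi\pi')$. I would first record two basic facts about $\psi_\infty$: (a) it is trivial on $\mathcal{O}_K \subset K_\infty$, since every element of $\F_q[T]$ has zero coefficient at $T^{-1}$; and (b) the pairing $(u,v) \mapsto \psi_\infty(uv/\pi\pi')$ is a perfect self-pairing on $\mathcal{O}_K/\pi\pi'$. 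This yields the orthogonality relation
\[
\mathbf{1}_{\underline{x} \equiv \underline{a}\,(\mathrm{mod}\,\pi\pi')} = \frac{1}{q^{(n+1)N}} \ds\sum_{\underline{w}\,(\mathrm{mod}\,\pi\pi')} \psi_{\infty}\!\left(\frac{\underline{w}\cdot(\underline{x}-\underline{a})}{\pi\pi'}\right),
\]
which I would insert into the left-hand side after re-indexing the sum over $\underline{x}$ by residue classes $\underline{a}$ and interchanging summations.

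Next, I would evaluate the inner sum $\sum_{\deg_T(\underline{x}) < b} \psi_\infty(\underline{w}\cdot\underline{x}/\pi\pi')$ coordinate-by-coordinate. For a single coordinate, writing $x_i = \sum_{j=0}^{b-1} x_{i,j}T^j$ with $x_{i,j} \in \F_q$ and tracking only the $T^{-1}$-coefficient of $x_i w_i/\pi\pi'$ in the definition of $\psi_\infty$, a direct calculation gives
\[
\ds\sum_{\deg_T(x_i)<b} \psi_{\infty}\!\left(\frac{x_i w_i}{\pi\pi'}\right) = q^{b}\cdot \mathbf{1}_{\deg_T(w_i) < N - b},
\]
since the hypothesis $b < N$ ensures that the leading term of $w_i/\pi\pi' \in T^{-1}\F_q[[T^{-1}]]$ lies beyond $T^{-b}$ precisely when $\deg_T(w_i) < N - b$. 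Multiplying over the $n+1$ coordinates, the outer sum over $\underline{w}$ collapses to the ``dual box'' $\{\underline{w} : \deg_T(\underline{w}) < N - b\}$, producing the prefactor $q^{(n+1)b}/q^{(n+1)N} = q^{(n+1)(b-N)}$ that appears on the right-hand side of the proposition.

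Finally, I would decompose the remaining complete sum over $\underline{a}\,(\mathrm{mod}\,\pi\pi')$ by CRT. Writing $1 = \bar\pi\pi + \bar\pi'\pi'$ in $\mathcal{O}_K$ (with $\bar\pi,\bar\pi'$ as in the statement of the proposition), every $\underline{a}\,(\mathrm{mod}\,\pi\pi')$ admits a unique representation $\bar\pi'\pi'\underline{a}_1 + \bar\pi\pi\underline{a}_2$ with $\underline{a}_1 \in k_\pi^{n+1}$ and $\underline{a}_2 \in k_{\pi'}^{n+1}$ ranging independently. The multiplicative factors decouple as $\chi_\pi(G(\underline{a})) = \chi_\pi(G(\underline{a}_1))$ and $\chi_{\pi'}(G(\underline{a})) = \chi_{\pi'}(G(\underline{a}_2))$, while the identity $\underline{w}\cdot\underline{a}/\pi\pi' \equiv \bar\pi'\underline{w}\cdot\underline{a}_1/\pi + \bar\pi\underline{w}\cdot\underline{a}_2/\pi' \pmod{\mathcal{O}_K}$, combined with the triviality of $\psi_\infty$ on $\mathcal{O}_K$, splits the additive factor as well. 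The sum over $\underline{a}$ therefore factors cleanly as $S_G(\bar\pi'\underline{w},\chi_\pi)\cdot S_G(\bar\pi\underline{w},\chi_{\pi'})$, reproducing the right-hand side. The step requiring the most care is the coordinate-wise orthogonality in the second paragraph, which hinges on carefully unwinding the definition of $\psi_\infty$ via the $T^{-1}$-coefficient; the rest is essentially CRT bookkeeping together with the standard triviality of $\psi_\infty$ on $\mathcal{O}_K$.
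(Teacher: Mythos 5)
Your proposal is correct, and it reproduces the same two-stage structure as the paper (a Poisson-type duality converting the box sum into a sum over a dual box, followed by CRT to split the modulus $\pi\pi'$), but it replaces the paper's core technical ingredient with an elementary discrete one. The paper isolates the counting identity as Lemma \ref{lemma_count_mod} and proves it by continuous Fourier analysis on $K_\infty^{n+1}$, citing \cite[Lemmas 2.1--2.3]{BrVi15} for the Poisson summation formula and the integral evaluations over ultrametric balls. You instead use the discrete orthogonality relation for the pairing $(u,v)\mapsto\psi_\infty(uv/\pi\pi')$ on $\mathcal{O}_K/\pi\pi'$, and then evaluate the resulting sum $\sum_{\deg_T(x_i)<b}\psi_\infty(x_i w_i/\pi\pi')$ coordinate-by-coordinate by tracking the $T^{-1}$-coefficient; the hypothesis $b < \deg_T(\pi\pi')$ is used exactly as in the paper, to ensure that this inner sum collapses to $q^b\cdot\mathbf{1}_{\deg_T(w_i)<\deg_T(\pi\pi')-b}$. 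This yields the identity of Lemma \ref{lemma_count_mod} without appealing to $K_\infty$-measure theory, so your route is more self-contained and elementary. The CRT step is essentially the same in both: you bake $\bar\pi,\bar\pi'$ into the CRT idempotents $1=\bar\pi\pi+\bar\pi'\pi'$ from the start, whereas the paper writes $\underline{a}\equiv\underline{a}_1\pi'+\underline{a}_2\pi$ and introduces $\bar\pi,\bar\pi'$ via a change of variables at the end; these are trivially equivalent. Both correctly use the triviality of $\psi_\infty$ on $\mathcal{O}_K$ and the congruence $G(\underline{a})\equiv G(\underline{a}_1)\pmod\pi$ to factor the complete sum as $S_G(\bar\pi'\underline{w},\chi_\pi)\,S_G(\bar\pi\underline{w},\chi_{\pi'})$.
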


\subsection{Application to the unramified sieve term}
By \eqref{eta-becomes-chi-1} and  Proposition \ref{move-to-complete-char-sum}, we observe that, 
for any primes $\pi_1, \pi_2 \in \Pcal$ with $\pi_1 \neq \pi_2$,

\begin{multline}\label{identity-complete1}
\ds\sum_{\substack{
\underline{x} \in  \cal{O}_K^{n+1}
\\
\deg_T (\underline{x}) < b 
}
}
\left(
\left|
\eta_{\pi_1}^{-1}(\underline{x} \, (\mod \pi_1))
\right|
-
1
\right)
\left(
\left|
\eta_{\pi_2}^{-1}(\underline{x} \, (\mod \pi_2))
\right|
-
1
\right)
\\
=
q^{-(n+1) (2\Del-b)}
\ds\sum_{\substack{
\chi_{\pi_1} \neq \chi_0
\\
\chi_{\pi_2} \neq \chi_0
}
}
\ds\sum_{\substack{
\underline{x} \in \cal{O}_K^{n+1} 
\\
\deg_T (\underline{x}) < 2\Delta - b
}
}
S_F \left(\bar{\pi}_2 \underline{x}, \chi_{\pi_1}\right)
S_F \left(\bar{\pi}_1 \underline{x}, \chi_{\pi_2}\right).
\end{multline}
Here we must assume that
\beq\label{assume:b<2D}
b< 2\Del
\eeq 
in order to apply Proposition \ref{move-to-complete-char-sum}. We keep this assumption from here onwards.
Finally, recall the sum over $\underline{x}$ in (\ref{general-sieve-cyclic-inequality-application_goal}) actually restricts to 
those $\underline{x}$ for which $F(\underline{x}) \not\equiv 0 \, (\mod \pi_1 \pi_2)$. 
But as remarked immediately below  (\ref{identity-complete1-unram}), the left-hand side of (\ref{identity-complete1-unram}) does not change when the restriction $F(\underline{x}) \not\equiv 0 \, (\mod \pi_1 \pi_2)$ is removed. 
Thus,  \eqref{identity-complete1} is actually equal to the unramified sieve term. As such, Proposition \ref{prop_expand} is proved, as long as we verify 
Proposition \ref{move-to-char-sum} 
and 
Proposition \ref{move-to-complete-char-sum}.

\subsection{Proof of Proposition \ref{move-to-char-sum}}\label{sec_char_proof}
 
 As in the setting of the proposition,
 let $q$ be an odd rational prime power, 
 take $K = \F_q(T)$,
 let $\ell$ be a rational prime with $\ell \mid (q-1)$, and 
 let $\pi \in \cal{O}_K$ be a prime. Since $\ell \mid (q-1)$, then $\ell \mid (q^{\deg_T(\pi)}-1)=|k_\pi^*|$. We can then consider the set of multiplicative characters 
 \[\chi_\pi :k_\pi^* \rightarrow \C\]
 of order dividing $\ell$ on $k_\pi^*$, which forms a group of order $\ell$. We can extend $\chi_\pi$ to be defined over $\mathcal{O}_K$ by setting $\chi_\pi(a)=0$ when $\pi\mid a$. 
 These characters can be given explicitely by $\ell$-power symbols, as described in \cite[ch. 3]{Ro02}. It is known that 
 \begin{equation}\label{residue-symbol-ell-power}
\chi_\pi(a)= 1
\;
\Leftrightarrow
\;
\pi \nmid a \mbox{ and }X^{\ell} \equiv a \, (\mod \pi)
\;
\text{is solvable};
\end{equation}
see \cite[Prop. 3.1, p. 24]{Ro02}. 
 
The character sums are given by  
\begin{equation}\label{second-char-sum}
\ds\sum_{\chi_{\pi}}
\chi_{\pi}(a)
=
 \left\{
 \begin{array}{cl}
 1 & \text{if $\pi \mid a$,}
 \\
 \ell & \text{if $\pi \nmid a$ and $X^{\ell} \equiv a \, (\mod \pi)$ is solvable,}
 \\
 0 & \text{if $\pi \nmid a$ and $X^{\ell} \equiv a \, (\mod \pi)$ is not solvable,}
 \end{array}
 \right.
\end{equation}
where 
$\chi_{\pi}$
runs over all the multiplicative characters of $\cal{O}_K$ of modulus $\pi$ and order dividing $\ell$. See \cite[Prop. 4.2, p. 35]{Ro02}. 
Consequently,
$$
\ds\sum_{\chi_{\pi}} \chi_{\pi}(a)
=
\#\left\{
y \, (\mod \pi) \in k_{\pi}:
a \equiv y^{\ell} \, (\mod \pi)
\right\},
$$
which completes the proof of Proposition \ref{move-to-char-sum}.
 
\subsection{Proof of Proposition \ref{move-to-complete-char-sum}}\label{sec_proof_comp}
As before, let $q$ be an odd rational prime power and take 
$K = \F_q(T)$.
We first prove a lemma about detecting congruences with the additive character $\psi_{\infty}$.

\begin{lemma}\label{lemma_count_mod}
Let $u \in \cal{O}_K$ and $\underline{a} \in \cal{O}_K^{n+1}$. 
Let $b$ be an integer such that
$$0<b< \deg_T(u).$$
Then 
\[
\ds\sum_{\substack{
\underline{x} \in \cal{O}_K^{n+1}
\\
\deg_T (\underline{x}) < b
\\
\underline{x} \equiv \underline{a} \, (\mod u)
}
}
1
=
q^{-(n+1)(\deg_T(u)-b)}
\ds\sum_{\substack{
\underline{x} \in \cal{O}_K^{n+1}
\\
\deg_T (\underline{x}) < \deg_T(u) - b
}
}
\psi_{\infty}
\left(
-
\frac{\underline{x}}{u} \cdot \underline{a}
\right).
\]
\end{lemma}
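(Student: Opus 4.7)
The identity is a Fourier/orthogonality statement for the additive character $\psi_\infty$ on $K_\infty$, and the natural approach is to reduce to one variable and then invoke orthogonality on $\mathbb{F}_q$. First, observe that both sides factor as products over the $n+1$ coordinates: the condition $\underline{x} \equiv \underline{a} \pmod{u}$ decouples into $x_i \equiv a_i \pmod{u}$ for each $i$, so the left-hand side is a product of one-variable counts; on the right, $\psi_\infty(-\underline{x} \cdot \underline{a}/u) = \prod_{i=0}^n \psi_\infty(-x_i a_i/u)$ (by additivity of $\psi_\infty$), and the constraint $\deg_T(\underline{x}) < \deg_T(u) - b$ factors as $\deg_T(x_i) < \deg_T(u) - b$ for each $i$. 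The prefactor $q^{(n+1)(b-\deg_T(u))}$ distributes as one $q^{b-\deg_T(u)}$ per coordinate. Thus it suffices to prove the one-variable version:
\[
\#\{x \in \mathcal{O}_K : \deg_T(x) < b,\ x \equiv a \pmod{u}\} = q^{b - \deg_T(u)} \sum_{\substack{y \in \mathcal{O}_K \\ \deg_T(y) < \deg_T(u) - b}} \psi_\infty(-ya/u).
\]

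Next, I would establish the elementary orthogonality lemma: for any $\alpha \in K_\infty$ and integer $M \geq 1$,
\[
\sum_{\substack{y \in \mathcal{O}_K \\ \deg_T(y) < M}} \psi_\infty(y\alpha) = \begin{cases} q^M, & \text{if the coefficients of } T^{-1},\ldots,T^{-M} \text{ in } \alpha \text{ all vanish,} \\ 0, & \text{otherwise.} \end{cases}
\]
Writing $y = \sum_{i=0}^{M-1} b_i T^i$ and $\alpha = \sum_{j \leq N} c_j T^j$, the coefficient of $T^{-1}$ in $y\alpha$ is $\sum_{i=0}^{M-1} b_i c_{-1-i}$. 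By the definition of $\psi_\infty$ in \eqref{add_char}, the sum factors into $M$ independent $\mathbb{F}_q$-sums $\sum_{b \in \mathbb{F}_q}\exp(2\pi i\,\mathrm{Tr}_{\mathbb{F}_q/\mathbb{F}_p}(bc_{-1-i})/p)$, each equal to $q$ if $c_{-1-i} = 0$ and to $0$ otherwise, by standard orthogonality for additive characters of $\mathbb{F}_q$.

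Apply this with $\alpha = -a/u$ and $M = \deg_T(u) - b$. Euclidean division in $\mathbb{F}_q[T]$ writes $a = hu + r$ with $\deg_T(r) < \deg_T(u)$, so $-a/u = -h - r/u$, whose fractional part in $1/T$ is $-r/u$. The top $M = \deg_T(u) - b$ Laurent coefficients of $-r/u$ vanish precisely when $\deg_T(r) - \deg_T(u) < -M$, i.e., $\deg_T(r) < b$ (with the case $r = 0$ included trivially). Therefore the inner character sum equals $q^{\deg_T(u)-b}$ when $\deg_T(r) < b$ and $0$ otherwise, so the full right-hand side equals the indicator of the event $\deg_T(r) < b$.

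Finally, since $b < \deg_T(u)$ by hypothesis, two distinct polynomials of degree $< b$ necessarily lie in distinct residue classes modulo $u$; hence the left-hand count is $1$ if the unique representative $r$ of $a\ (\mathrm{mod}\ u)$ of degree $< \deg_T(u)$ actually has $\deg_T(r) < b$, and $0$ otherwise. This matches the right-hand side and completes the one-variable case. There is no serious obstacle here: the argument is entirely a bookkeeping exercise in the Laurent expansion at $\infty$, the only slightly delicate point being to confirm that the conditions ``$\deg_T(r) < b$'' produced by the two sides agree, which follows from the identity $M = \deg_T(u) - b$ together with the explicit form of the expansion of $r/u$ in negative powers of $T$.
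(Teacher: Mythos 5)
Your proof is correct, and it takes a genuinely different route from the paper's. The paper proves this identity by recognizing the left-hand side as a sum of shifts $\sum_{\underline{x}'} w_b(\underline{a} + \underline{x}'u)$ of an indicator function, and then invokes the Fourier-analytic machinery of \cite{BrVi15} on $K_\infty^{n+1}$: Poisson summation (Lemma 2.1 there), a change of variables for the Haar measure (Lemma 2.3), and an explicit formula for the Fourier transform of the cutoff (Lemma 2.2). You instead factor both sides over the $n+1$ coordinates, reduce to a one-variable statement, prove a self-contained orthogonality lemma for $\psi_\infty$ by expanding $y\alpha$ in Laurent coefficients and using standard character orthogonality over $\F_q$ coordinate by coordinate, and finally match the two sides via Euclidean division $a = hu + r$. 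Your bookkeeping of the Laurent tail of $-r/u$ at $\infty$ is correct: the coefficients of $T^{-1},\dots,T^{-M}$ with $M = \deg_T u - b$ all vanish precisely when $\deg_T r < b$, which is exactly the condition that the unique representative of $a \pmod u$ of degree $< \deg_T u$ also has degree $< b$, i.e., that the left-hand count is $1$ rather than $0$. What each approach buys: the paper's proof slots neatly into an existing reference and is the cleaner way to generalize (e.g., to non-box regions or smoothed weights), whereas yours is elementary and self-contained, avoiding any appeal to Haar measure or Poisson summation, and makes the role of the Laurent expansion at $\infty$ completely transparent. Both are valid; the choice is one of exposition and re-usability.
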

\begin{proof}
We rewrite the left-hand side by using the following indicator functions:
$$
w_{\infty, b} : \cal{O}_K \longrightarrow \{0, 1\},
$$
$$
w_{\infty, b}(x) :=
\left\{
\begin{array}{cl}
1 & \text{if $\deg_T (x) < b$,}
\\
0 & \text{otherwise},
\end{array}
\right.
$$
and
$$
w_{b} : \cal{O}_K^{n+1} \longrightarrow \{0, 1\},
$$
$$
w_{b}(\underline{x}) := \ds\prod_{1 \leq i \leq n} w_{\infty, b}(x_i).
$$
With these definitions, we  write
$$
\ds\sum_{\substack{
\underline{x} \in \cal{O}_K^{n+1}
\\
\deg_T (\underline{x}) < b
\\
\underline{x} \equiv \underline{a} \, (\mod u)
}
}
1
=
\ds\sum_{
\underline{x}' \in \cal{O}_K^{n+1}
}
w_b(\underline{a} + \underline{x}' u).
$$

To understand the above sum,
we use Fourier analysis on 
$K_{\infty}^{n+1}$,  
as detailed in \cite[Section 2]{BrVi15}.
By \cite[Lemma 2.1]{BrVi15}, we obtain
\begin{eqnarray*}
\ds\sum_{
\underline{x}' \in \cal{O}_K^{n+1}
}
w_b(\underline{a} + \underline{x}' u)
&=&
\ds\sum_{
\underline{x}'' \in \cal{O}_K^{n+1}
}
\ds\int_{
K_{\infty}^{n+1}
}
w_b(\underline{a} + \underline{s}u)
\
\psi_{\infty}(\underline{x}'' \cdot \underline{s}) \ d \underline{s}
\\
&=&
\ds\sum_{
\underline{x}'' \in \cal{O}_K^{n+1}
}
\ds\int_{
\left\{
\underline{s} \in
K_{\infty}^{n+1}: \
\left|
\underline{a} + \underline{s} u
\right|_{\infty}
< q^b
\right\}
}
\psi_{\infty} (\underline{x}'' \cdot \underline{s}) \ d \underline{s}.
\end{eqnarray*}
Making the change of variables
$$
\underline{t} := \underline{a} + \underline{s} \ u,
$$
for which
$$
d  \underline{t} = q^{(n+1) \deg_T(u)} \ d \underline{s},
$$
by \cite[Lemma 2.3]{BrVi15} we infer that
\begin{eqnarray*}
\ds\int_{
\left\{
\underline{s} \in
K_{\infty}^{n+1}: \
\left|
\underline{a} + \underline{s}u
\right|_{\infty}
< q^b
\right\}
}
&&
\psi_{\infty} (\underline{x}'' \cdot \underline{s}) \ d \underline{s}
\\
&=&
q^{- (n+1) \deg_T(u)}
\ds\int_{
\left\{
\underline{t} \in
K_{\infty}^{n+1}: \
|\underline{t}|_{\infty} < q^b
\right\}
}
\psi_{\infty} \left(\underline{x}'' \cdot \frac{\underline{t} - \underline{a}}{u}\right) \ d \underline{t}
\\
&=&
q^{- (n+1) \deg_T(u)}
\psi_{\infty} \left(- \frac{\underline{x}'' \cdot \underline{a}}{u}\right)
\ds\int_{
\left\{
\underline{t} \in
K_{\infty}^{n+1}: \
|\underline{t}|_{\infty} < q^b
\right\}
}
\psi_{\infty}
\left(
\frac{\underline{x}''}{u} \cdot \underline{t}
\right)
\ d \underline{t}.
\end{eqnarray*}
By \cite[Lemma 2.2]{BrVi15},
$$
\ds\int_{
\left\{
\underline{t} \in
K_{\infty}^{n+1}: \
|\underline{t}|_{\infty} < q^b
\right\}
}
\psi_{\infty}
\left(
\frac{\underline{x}''}{u} \cdot \underline{t}
\right)
\ d \underline{t}
=
\left\{
\begin{array}{cl}
q^{(n+1) b} & \text{if $\left|\frac{\underline{x}''}{u}\right|_{\infty} < q^{-b}$,}
\\
0 & \text{otherwise}.
\end{array}
\right.
$$
Hence
\begin{eqnarray*}
\ds\sum_{
\underline{x}' \in \cal{O}_K^{n+1}
}
w_b(\underline{a} + \underline{x}' u)
&=&
q^{-(n+1)(\deg_T(u)-b)}
\ds\sum_{\substack{
\underline{x}'' \in \cal{O}_K^{n+1}
\\
\deg_T (\underline{x}'') < \deg_T(u) - b
}
}
\psi_{\infty}
\left(
-
\frac{\underline{x}''}{u} \cdot \underline{a}
\right).
\end{eqnarray*}

\end{proof}

\medskip

Now we prove Proposition \ref{move-to-complete-char-sum}. 
Let $\pi, \pi' \in \cal{O}_K$  be primes with $\pi \neq \pi'$,
let $\chi_{\pi}, \chi_{\pi'}$ be non-principal multiplicative characters of $\cal{O}_K$ of moduli $\pi$, $\pi'$ (respectively),
and
let $G \in \cal{O}_K[X_0, \ldots, X_n]$.
Fix an integer $b$ such that
$$0< b< \deg_T(\pi_1 \pi_2).$$
We partition $\cal{O}_K^{n+1}$ according to the residue classes modulo $\pi \pi'$:

\begin{eqnarray*}
\ds\sum_{\substack{
\underline{x} \in \cal{O}_K^{n+1}
\\
\deg_T (\underline{x}) < b
}
}
\chi_{\pi}(G(\underline{x}))
\chi_{\pi'}(G(\underline{x}))
&=&
\ds\sum_{
\underline{a} \, (\mod \pi \pi') \in (\cal{O}_K/(\pi \pi'))^{n+1}
}
\chi_{\pi}(G(\underline{a}))
\chi_{\pi'}(G(\underline{a}))
\ds\sum_{\substack{
\underline{x} \in \cal{O}_K^{n+1}
\\
\deg_T (\underline{x}) < b
\\
\underline{x} \equiv \underline{a} \, (\mod \pi \pi')
}
}
1.
\end{eqnarray*}
By Lemma \ref{lemma_count_mod}, the above double sum equals
\beq\label{complete-char-sum-in-progress}
q^{-(n+1)(\deg_T(\pi \pi'))-b}
\ds\sum_{
\underline{a} \, (\mod \pi \pi') \in (\cal{O}_K/(\pi \pi'))^{n+1}
}
\chi_{\pi}(G(\underline{a}))
\chi_{\pi'}(G(\underline{a}))
\ds\sum_{\substack{
\underline{x} \in \cal{O}_K^{n+1}
\\
\deg_T(\underline{x}) < \deg_T(\pi \pi') - b
}
}
\psi_{\infty}
\left(
-
\frac{\underline{x}}{\pi \pi'} \cdot \underline{a}
\right).
\eeq

Recall that $\pi \neq \pi'$. Then,
on one hand,
there exist uniquely determined
$\overline{\pi}' \, (\mod \pi) \in k_{\pi}$,
$\overline{\pi} \, (\mod \pi') \in k_{\pi'}$
such that
$$
\pi \overline{\pi} \equiv 1 \, (\mod \pi'),
$$
$$
\pi' \overline{\pi}' \equiv 1 \, (\mod \pi);
$$
on the other hand,
 by the Chinese Remainder Theorem, there exist uniquely determined elements
$\underline{a}_1 \, (\mod \pi) \in k_{\pi}^{n+1}$,
$\underline{a}_2 \, (\mod \pi') \in k_{\pi'}^{n+1}$
such that
$$
\underline{a} \equiv \underline{a}_1 \pi' + \underline{a}_2 \pi \, (\mod \pi \pi').
$$
Consequently,
\begin{eqnarray*}
&&
\ds\sum_{
\underline{a} \, (\mod \pi \pi') \in (\cal{O}_K/(\pi \pi'))^{n+1}
}
\chi_{\pi}(G(\underline{a}))
\chi_{\pi'}(G(\underline{a}))
\ds\sum_{\substack{
\underline{x} \in \cal{O}_K^{n+1}
\\
\deg_T(\underline{x}) < \deg_T(\pi \pi') - b
}
}
\psi_{\infty}
\left(-
\frac{\underline{x}}{\pi \pi'} \cdot \underline{a}
\right)
\\
&=&
\ds\sum_{\substack{
\underline{a}_1 \, (\mod \pi) \in k_{\pi}^{n+1}
\\
\underline{a}_2 \, (\mod \pi') \in k_{\pi'}^{n+1}
}
}
\chi_{\pi} \left(G(\underline{a}_1 \pi')\right)
\chi_{\pi'} \left(G(\underline{a}_2 \pi)\right)
\ds\sum_{\substack{
\underline{x} \in \cal{O}_K^{n+1}
\\
\deg_T(\underline{x}) < \deg_T(\pi \pi') - b
}
}
\psi_{\infty}
\left(
-
\frac{\underline{x} \cdot \underline{a}_1}{\pi} 
\right)
\
\psi_{\infty}
\left(
-
\frac{\underline{x} \cdot \underline{a}_2}{\pi'} 
\right).
\end{eqnarray*}
Using  that $\pi \neq \pi'$, we make the change of variables $\underline{a_1} \mapsto \underline{a_1} \overline{\pi}' \, (\mod{\pi})$ and $\underline{a_2} \mapsto \underline{a_2} \overline{\pi} \, (\mod{\pi'})$ and deduce that the above expression equals
\[
\ds\sum_{\substack{
\underline{a}_1 \, (\mod \pi) \in k_{\pi}^{n+1}
\\
\underline{a}_2 \, (\mod \pi') \in k_{\pi'}^{n+1}
}
}
\chi_{\pi} \left(G(\underline{a}_1)\right)
\chi_{\pi'} \left(G(\underline{a}_2)\right)
\ds\sum_{\substack{
\underline{x} \in \cal{O}_K^{n+1}
\\
\deg_T(\underline{x}) < \deg_T(\pi \pi') - b
}
}
\psi_{\infty}
\left(
-
\frac{(\overline{\pi}'\underline{x}) \cdot \underline{a}_1}{\pi} 
\right)
\
\psi_{\infty}
\left(
-
\frac{(\overline{\pi}\underline{x}) \cdot \underline{a}_2}{\pi'} 
\right).
\]
This completes the proof of Proposition \ref{move-to-complete-char-sum}.

\section{The Weil-Deligne bounds}\label{sec_Weil_Deligne}

We now state and prove the Weil-Deligne estimates that will be used to bound the unramified sieve term in the form of the expansion in Proposition \ref{prop_expand}. 

\begin{proposition}(Weil-Deligne bounds)\label{bound-complete-char-sum}

\noindent
Let $q$ be the power of an odd rational prime and take $K = \F_q(T).$
Let 
$n \geq 1$ be an integer
and
$H \in \cal{O}_K[X_0, \ldots, X_n]$ a homogeneous polynomial of degree $m \geq 2$ such that the projective hypersurface defined by $H(X_0,\ldots,X_n)=0$ in $\mathbb{P}^n_{\overline{K}}$ is nonsingular. 
Assume that $\car \F_q \nmid m.$
Denote by 
$W$,  ${\mathcal{W}}$ 
the nonsingular projective hypersurface, 
respectively
the affine hypersurface,
defined by 
\[H(X_0, \ldots, X_n) = 0.\]
Denote by
$\cal{P}_{\mathrm{exc}}(H) \subseteq \cal{O}_K$
the finite set of exceptional primes
introduced in Proposition \ref{projective-geometric-lemma}.
For a prime $\pi \notin  \cal{P}_{\mathrm{exc}}(H)$,
denote by 
$W_{\pi}$, ${\cal{W}}_{\pi}$ 
the nonsingular projective hypersurface,
respectively
the affine hypersurface,
defined by
\[H(X_0, \ldots, X_n) \equiv 0 \, (\mod \pi),\]
and
denote by $W_{\pi}^{\ast}$, ${\cal{W}}_{\pi}^{\ast}$  their duals. Fix a rational prime $\ell$ with $\ell \mid (q-1)$. 
Then, for every prime $ \pi \notin  \cal{P}_{\mathrm{exc}}(H) $,
for every  (non-principal) multiplicative character $ \chi_{\pi}$ of $\cal{O}_K$ of modulus $\pi$ and order $\ell$,
and for every $\underline{w} \in \cal{O}_K^{n+1}$,
the mixed character sum $S_H(\underline{w}, \chi_{\pi})$ defined in (\ref{SG_sum_dfn}) satisfies the bounds:
\begin{enumerate}
\item[(i)]
provided $\underline{w} \equiv \underline{0} \, (\mod \pi)$,
$$
\left|
S_H(\underline{w}, \chi_{\pi})
\right|
\ll_{n,\deg_{\underline{X}} (H)}
q^{\frac{(n+2) \deg_T(\pi)}{2}};
$$
\item[(ii)]
provided $\underline{w} \not\equiv \underline{0} \, (\mod \pi)$,
$$
\left|
S_H(\underline{w}, \chi_{\pi})
\right|
\ll_{n, \deg_{\underline{X}} (H)}
q^{\frac{(n+2) \deg_T (\pi)}{2}};
$$
\item[(iii)]
provided $\underline{w} \not\equiv \underline{0} \, (\mod \pi)$
and $\underline{w} \not\in {\mathcal{W}}_{\pi}^{\ast}$, 
$$
\left|
S_H(\underline{w}, \chi_{\pi})
\right|
\ll_{n, \deg_{\underline{X}} (H)}
q^{\frac{(n+1) \deg_T (\pi)}{2}}.
$$
\end{enumerate}
\end{proposition}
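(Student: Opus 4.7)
The plan is to reduce Proposition \ref{bound-complete-char-sum} to the Deligne-Weil estimates for exponential sums, as packaged for mixed character sums of this shape by Katz \cite{Kat09}. Fix $\pi \notin \mathcal{P}_{\mathrm{exc}}(H)$ and set $\Delta = \deg_T \pi$, so that we work over the finite field $k_\pi$ of size $q^\Delta$. By Lemma \ref{lemma_DR}, the reduction $H \pmod \pi$ is Dwork-regular over $k_\pi$, and Remark \ref{remark_Katz} then provides a supply of Deligne polynomials obtained by slicing $H$ along coordinate subspaces. These are exactly the geometric inputs required by the machinery of \cite{Kat09}.

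For case (i), the additive character in $S_H(\underline{0},\chi_\pi)$ is trivial, so the sum collapses to a pure multiplicative character sum. Combining all characters of order dividing $\ell$ as in Proposition \ref{move-to-char-sum} yields the point-counting identity
\[
\#\left\{(y,\underline{a}) \in k_\pi^{n+2} : y^\ell = H(\underline{a})\right\} = q^{(n+1)\Delta} + \sum_{\chi \ne \chi_0} S_H(\underline{0},\chi),
\]
so the required bound reduces to estimating the number of $k_\pi$-points on the affine cyclic cover $Y^\ell = H(\underline{X})$. The Dwork-regularity of $H \pmod \pi$, combined with the slicing Lemma \ref{lemma_slice_Deligne} (which controls the behavior at infinity and along the branch locus $H=0$), ensures that this cover is sufficiently smooth for the Weil-Deligne bounds to deliver the estimate $q^{(n+2)\Delta/2}$.

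For cases (ii) and (iii), the nontrivial additive phase $\psi_\infty(-\underline{w}\cdot\underline{a}/\pi)$ puts us in the genuinely mixed setting. The bound in (ii) is the generic Deligne-type estimate, valid whenever the homogeneous form under the multiplicative character is Deligne, which is automatic by Remark \ref{remark_Katz} applied with $S = \{0,\dots,n\}$. The improvement in (iii) exploits the hypothesis $\underline{w} \notin \mathcal{W}_\pi^*$: by the defining property of the projective dual (Proposition \ref{projective-geometric-lemma}), this is precisely the transversality of the hyperplane $\underline{w}\cdot\underline{x}=0$ to the smooth projective hypersurface $W_\pi$. Transversality eliminates a spurious contribution in the top $\ell$-adic cohomology of the Kummer-Artin-Schreier sheaf that computes $S_H$, upgrading the estimate to the full square-root cancellation $q^{(n+1)\Delta/2}$.

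The main obstacle is not any single calculation but rather matching the geometric hypotheses: one must verify that the projective Dwork-regularity of $H$ produces exactly the affine, stratified non-degeneracy required by Katz's theorems, and that the dual-variety condition in (iii) coincides with the transversality condition appearing in his cohomological vanishing arguments. This dictionary is furnished by Proposition \ref{projective-geometric-lemma}, Lemma \ref{lemma_DR}, Remark \ref{remark_Katz}, and Lemma \ref{lemma_slice_Deligne}; once it is in place, all three bounds follow directly from the results cited in \cite{Kat09}, with the implicit constants depending only on $n$ and $\deg_{\underline{X}}(H)$ via the Betti numbers of the relevant sheaves.
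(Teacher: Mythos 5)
Your outline of case (iii) essentially matches the paper: cite Katz's nonsingular mixed character sum estimate (the paper uses \cite[Thm.~1.1]{Kat07}), interpreting $\underline{w}\notin\mathcal{W}_\pi^*$ as the transversality hypothesis between $H\,(\mathrm{mod}\,\pi)$ and the linear form $\underline{w}\cdot\underline{X}$. Cases (i) and (ii), however, each contain a genuine gap.

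For (i), you propose to bound $S_H(\underline{0},\chi_\pi)$ by estimating the number of $k_\pi$-points on the cyclic cover $Y^\ell = H(\underline{X})$. The identity
\[
\#\bigl\{(y,\underline{a}) \in k_\pi^{n+2} : y^\ell = H(\underline{a})\bigr\} = q^{(n+1)\deg_T\pi} + \sum_{\chi\ne\chi_0} S_H(\underline{0},\chi)
\]
controls only the \emph{sum} of all $\ell-1$ nontrivial character sums, not any single one; Proposition~\ref{bound-complete-char-sum}(i) asserts a bound for each fixed $\chi_\pi$, and that does not follow from bounding the aggregate, since the individual terms could be large with mutual cancellation. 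The paper instead slices $\sum_{\underline{a}}\chi_\pi(H(\underline{a}))$ by the first nonzero coordinate, reducing to sums $\sum\chi_\pi\bigl(H_{\{j,\ldots,n\}}(1,\cdot)\bigr)$ to which \cite[Thm.~2.1]{Kat02} applies directly, with Lemma~\ref{lemma_slice_Deligne} verifying the Deligne and affine-smoothness hypotheses for each slice.

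For (ii), invoking ``the generic Deligne-type estimate'' glosses over the real difficulty: once the additive phase is nontrivial but $\underline{w}$ may lie on $\mathcal{W}_\pi^*$, the transversality hypothesis needed for a direct mixed-sum theorem fails, and no theorem you cite applies as stated. The paper resolves this with a Gauss-sum device: using $\tau(\chi_\pi)\tau(\bar\chi_\pi) = \chi_\pi(-1)q^{\deg_T\pi}$, one expands $\chi_\pi(H(\underline{a}))$ into a sum over $\beta\in k_\pi^*$ of additive characters, converting $S_H(\underline{w},\chi_\pi)$ into $q^{-\deg_T(\pi)/2}$ times a sum over $\beta$ of purely additive sums $\sum_{\underline{a}}\psi_\infty\bigl((\beta H(\underline{a}) - \underline{w}\cdot\underline{a})/\pi\bigr)$. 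Each of these has a Deligne polynomial argument (leading form $\beta H$) and is estimated by \cite[Thm.~8.4]{Del74} as $\ll q^{(n+1)\deg_T(\pi)/2}$; the loss of half a variable to the $\beta$-sum then gives the stated exponent $(n+2)/2$. This analytic maneuver, not a matter of matching geometric hypotheses, is absent from your proposal. You also cite \cite{Kat09} for the core estimates, but the paper draws on \cite{Del74}, \cite{Kat02}, and \cite{Kat07} for the three parts; \cite{Kat09} is used only for Dwork-regularity preliminaries.
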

 The proof of Proposition \ref{bound-complete-char-sum} is based on estimates for character sums
 of polynomials in several variables, pioneered by Deligne \cite{Del74} and further generalized by Katz \cite{Kat02}, \cite{Kat07}, Rojas-Le\'on, and others.
 In the proof we will call upon the following definition:
  \begin{definition}
 Let $k$ be a finite field and let 
 $d, r \geq 1$ be integers. 
 Let $f 
 \in k[X_1, \ldots, X_r]$ be a   polynomial of degree $d$, which we write as
 $$
 f = f_d + f_{d-1} + \cdots + f_0
 $$
 for uniquely determined homogeneous polynomials $f_i \in k[X_1, \ldots, X_r]$ with $\deg_{\underline{X}} (f_i) = i$. 
 We call $f$ a {\emph{Deligne polynomial over $k$}} if:
 \begin{enumerate}
 \item[(i)]
 $\car k \nmid d$;
 \item[(ii)]
 the equation $f_d = 0$ defines a smooth, degree $d$ hypersurface in $\P_k^{r-1}$.
 \end{enumerate}
 \end{definition} 
 
Relative to the trivial bound for $|S_H(\underline{w},\chi_\pi)|$ given in (\ref{SH_triv}), we see that the non-trivial bounds given in cases (i) and (ii) provide square-root cancellation in all but one of the $n+1$ variables of the sum, while case (iii) provides square-root cancellation in all $n+1$ variables. 
We think of cases (i) and (ii) as exceptional ``zero'' or ``bad'' cases, respectively, and of case (iii) as the  ``good'' case. In our application in \S \ref{sec_WD_app}, we use that for a fixed prime $\pi$, as the parameter $\underline{w}$ varies over $\cal{O}_K^{n+1}$, case (iii) is generic, with cases (i) and (ii) being rare.

\subsection{Proof of part (i) of Proposition \ref{bound-complete-char-sum}}

We consider the case $\underline{w} \equiv \underline{0} \, (\mod \pi)$
and seek to bound
\begin{equation}\label{char-sum-part1}
\left|
S_H\left(
\underline{0}, \chi_{\pi}
\right)
\right|
=
\left|
\ds\sum_{
\underline{a} \, (\mod \pi) \in k_{\pi}^{n + 1}
}
\chi_{\pi}(H(\underline{a}))
\right|.
\end{equation}
Our main tool is the following estimate for   multiplicative character sums, which is a special case of a much more general result in \cite{Roj05}. 
\begin{theorem}\label{Roj05}(special case of \cite[Thm. 1.1(a)]{Roj05})

\noindent
Let $k$ be a finite field, $r \geq 1$ an integer, and
$\chi: k^{\ast} \longrightarrow \C^{\ast}$ a non-principal multiplicative character, extended to $k$ by $\chi(0):= 0$. Let $Y = \mathbb{P}^{r}_k$, let $H \in k[X_0,\ldots,X_r]$ be a homogeneous polynomial of degree $m$, and let $Z \in k[X_0,\ldots,X_r]$ be a homogeneous polynomial of degree $e$. Assume that $(e,m)=1$,  $(e,\mathrm{char} k)=1$, and $\chi^e$ is nontrivial. Let $H$ denote the hypersurface $H=0$ in $\mathbb{P}^{r}$ and similarly let $Z$ denote the hypersurface $Z=0$ in $\mathbb{P}^{r}$. Assume that $Y \cap H \cap Z$ has codimension $2$ in $Y$, and let $\delta$ denote the dimension of the singular locus of $Y \cap H \cap Z \subset \mathbb{P}^r$.  Set $V= Y - (H \cup Z)$ and define $f : V \rightarrow k^\ast$  by $f(\underline{X}) = H(\underline{X})^e/Z(\underline{X})^m.$ 
Then
$$
\left|
\ds\sum_{
\underline{x} \in V(k)
}
\chi(f(\underline{x}))
\right|
\leq
3(3+e+m)^{r+2}|k|^{\frac{r + \delta + 2}{2}}.
$$
\end{theorem}

\begin{corollary}\label{cor_RL}
Let $k$ be a finite field, $n \geq 1$ an integer, and
$\chi: k^{\ast} \longrightarrow \C^{\ast}$ a non-principal multiplicative character, extended to $k$ by $\chi(0):= 0$. 
Let $H \in k[X_0,\ldots,X_n]$ be a homogeneous polynomial of degree $m \geq 1$ such that the projective hypersurface defined by $H(X_0,\ldots,X_n)=0$ in $\mathbb{P}^n_{\overline{k}}$ is nonsingular. Then
$$
\left|
\ds\sum_{
\underline{x} \in k^{n+1}
}
\chi(H(\underline{x}))
\right|
\leq
3(m+4)^{n+3}|k|^{\frac{n  + 2}{2}}.
$$
\end{corollary}
\begin{proof}[Proof of Corollary \ref{cor_RL}]
  Note that $H(\underline{x})$ is not a well-defined map on a projective variety, since for example $\underline{x} = c\underline{x}$ as a projective point, for each $c \in k^\ast$, while $H(c\underline{x}) = c^mH(\underline{x}) \neq H(\underline{x})$ unless $c^m=1$ in $k$. The setting of Theorem \ref{Roj05} corrects for this, as follows. We set $Y = \mathbb{P}^{n+1}$ in the variables $X_0,\ldots,X_n,T$, so that $r=n+1.$ We define $H(X_0,\ldots,X_n,T)=H(X_0,\ldots,X_n)$ with degree $m$   and $Z(X_0,\ldots,X_n,T)=T$ so that $e=1$. Then $f (\un{X}) = H(\un{X})/T^m = H(c\un{x})/(cT)^m$ for any $c \in k^\ast,$ so is well-defined as a polynomial map on $V = \mathbb{P}^{n+1} - ((H=0) \cup (T=0)).$ 
  Furthermore, since $\chi(0)=0$, and using the homogeneity described above,
  \[ \sum_{
(\underline{x},t) \in V(k)
}
\chi(f(\underline{x},t))
= \sum_{
(\underline{x},t) \in \mathbb{P}_k^{n+1} \setminus (t=0)
 }
\chi(H(\underline{x})/t^d)
=  \sum_{
(\underline{x},1) \in \mathbb{P}_k^{n+1}  
 }
\chi(H(\underline{x}) ) =  \sum_{
\underline{x} \in k^{n+1}  
 }
\chi(H(\underline{x}) ).
\]
Note that 
\[\mathbb{P}^{n+1} \cap (H=0) \cap (T=0) =  \mathbb{P}^{n} \cap (H=0),\]
so that 
in the notation of the theorem, $\delta=-1$ since by assumption $H=0$ is nonsingular as a projective hypersurface in $\mathbb{P}^n.$ Moreover, the codimension of $\mathbb{P}^{n+1} \cap (H=0) \cap (T=0)$ in $\mathbb{P}^{n+1}$ is 2, as required. Hence by Theorem \ref{Roj05}, the corollary holds.
  
\end{proof}
The corollary immediately implies  (i) in Proposition \ref{bound-complete-char-sum}, upon taking $H$ as in the proposition, and any $\pi \not\in \mathcal{P}_\mathrm{exc}(H)$, with  $k=k_\pi$ and   $\chi_\pi$ as in the proposition.

\subsection{Proof of part (ii) of Proposition \ref{bound-complete-char-sum}}

We consider the case $\underline{w} \not\equiv \underline{0} \, (\mod \pi)$  
and seek to bound
\begin{equation*}\label{char-sum-part2}
\left|
S_H\left(
\underline{w}, \chi_{\pi}
\right)
\right|
=
\left|
\ds\sum_{
\underline{a} \, (\mod \pi) \in k_{\pi}^{n + 1}
}
\chi_{\pi}(H(\underline{a}))
\psi_{\infty} \left( - \frac{ \underline{w} \cdot \underline{a} }{\pi} \right)
\right|.
\end{equation*}
Our main tool is the following estimate for non-singular additive character sums,  due to Deligne:
\begin{theorem}\label{deligne-1974}(\cite[Thm. 8.4 p. 302]{Del74})

\noindent
Let $k$ be a finite field, $r \geq 1$ an integer, 
and
$\psi: (k, +) \longrightarrow (\C^{\ast}, \cdot)$ a non-trivial additive character.
Let $g \in k[X_1, \ldots, X_r]$ be a polynomial of degree $d  \geq 2$.
Assume that
$g$ is a Deligne polynomial over $k$.
Then
$$
\left|
\ds\sum_{
\underline{a} \in k^r
}
\psi (g(\underline{a}))
\right|
\leq
(d-1)^{r}
 |k|^{\frac{r}{2}}.
$$
\end{theorem}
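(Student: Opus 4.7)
The plan is to derive Deligne's bound from the Grothendieck-Lefschetz trace formula together with Deligne's purity theorem (Weil II). Fix a rational prime $\ell \neq \car k$ and let $\mathcal{L}_\psi$ be the rank-one Artin-Schreier $\overline{\Q}_\ell$-sheaf on $\A^1_k$ whose Frobenius trace at $t \in k$ equals $\psi(t)$. Viewing $g$ as a morphism $g:\A^r_k \longrightarrow \A^1_k$, the pullback $\mathcal{F} := g^{\ast}\mathcal{L}_\psi$ is a lisse rank-one sheaf on $\A^r_k$ whose Frobenius trace at $\underline{a} \in k^r$ is $\psi(g(\underline{a}))$. The trace formula then rewrites the sum of interest as
$$\ds\sum_{\underline{a} \in k^r} \psi(g(\underline{a})) = \ds\sum_{i=0}^{2r} (-1)^i \Tr\!\left(\Frob_k \bigm| H^i_c(\A^r_{\overline{k}}, \mathcal{F})\right).$$

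The crux is to exploit the Deligne-polynomial hypothesis to force cohomological concentration in middle degree. Compactify $\A^r_k \hookrightarrow \P^r_k$ with hyperplane at infinity $H_\infty \simeq \P^{r-1}_k$, and let $Z \subset H_\infty$ be the smooth projective hypersurface cut out by the top-degree form $g_d$. Because $\car k \nmid d$ and $Z$ is smooth, $g$ extends to a morphism $\widetilde{g}: Y \longrightarrow \P^1_k$ on the blow-up $Y$ of $\P^r_k$ along $Z$, and a standard local analysis at infinity (using that $\mathcal{L}_\psi$ is totally wildly ramified at $\infty$ with Swan conductor $1$) shows the corresponding direct-image sheaves have controlled ramification along the exceptional divisor. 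This forces $H^i_c(\A^r_{\overline{k}}, \mathcal{F}) = 0$ for all $i \neq r$, while a Bezout-type Euler-characteristic computation, comparing $\mathcal{F}$ to its extension-by-zero on $\P^r_k$ and correcting by the fiber-wise contribution of $Z$, yields
$$\dim H^r_c(\A^r_{\overline{k}}, \mathcal{F}) = (d-1)^r.$$

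Finally, since $\mathcal{L}_\psi$ is pure of weight $0$, so is $\mathcal{F}$, and Deligne's main theorem in Weil II implies that $H^r_c(\A^r_{\overline{k}}, \mathcal{F})$ is mixed of weights $\leq r$. Combining this with Poincar\'e duality and the cohomological concentration established above shows $H^r_c$ is in fact pure of weight exactly $r$, so each of its $(d-1)^r$ Frobenius eigenvalues has complex absolute value $|k|^{r/2}$, and the triangle inequality gives the claimed bound. The main obstacle, and the step where the Deligne-polynomial hypothesis is used essentially rather than formally, is the vanishing of $H^i_c$ for $i \neq r$: this requires the careful ramification analysis of $\mathcal{F}$ along the exceptional divisor of $Y \to \P^r_k$, leveraging both the smoothness of $Z$ and the hypothesis $\car k \nmid d$ to guarantee that the singularities of the compactified map $\widetilde{g}$ (and hence the local monodromy of $\mathcal{F}$ at infinity) are as mild as possible. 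Everything else is formal manipulation of the trace formula and an invocation of Weil II.
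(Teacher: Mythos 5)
First, a point of orientation: the paper does not prove this statement at all. Theorem \ref{deligne-1974} is quoted, with attribution, from Deligne's Weil~I paper \cite[Thm.~8.4]{Del74} and is then used as a black box in the proof of Proposition \ref{bound-complete-char-sum}. So there is no internal argument of the paper to compare yours against; the only question is whether your sketch would stand on its own as a proof of Deligne's theorem.

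As written, it would not, because the two facts that carry the entire content --- the vanishing $H^i_c(\A^r_{\overline{k}},\mathcal{F})=0$ for $i\neq r$ and the dimension formula $\dim H^r_c(\A^r_{\overline{k}},\mathcal{F})=(d-1)^r$ --- are asserted (``a standard local analysis at infinity\ldots shows'', ``a Bezout-type Euler-characteristic computation\ldots yields'') rather than proved, and these are precisely the technical heart of Deligne's argument (in \cite{Del74} he obtains them, and the weight bound, by induction via the hyperplane-at-infinity exact sequences and the main theorem of Weil~I for smooth projective hypersurfaces; Weil~II did not yet exist, though invoking it is of course legitimate). Two specific steps would also fail or are superfluous as stated: (a) a single blow-up of $\P^r_k$ along the reduced smooth locus $Z=\{g_d=0\}\subset H_\infty$ does not in general resolve the rational map $[F:X_0^d]$ (with $F$ the homogenization of $g$), since its base scheme is cut out by the ideal $(X_0^d,F)$ and not by the reduced ideal of $Z$; one needs an iterated blow-up or the closure of the graph, so the ``controlled ramification along the exceptional divisor'' is not available in the form you use it; (b) the closing claim that $H^r_c$ is pure of weight exactly $r$ ``by Poincar\'e duality and concentration'' is not justified by anything preceding it (duality relates $H^r_c$ to ordinary cohomology of the dual sheaf, and identifying the two needs an argument), but it is also unnecessary: once concentration and the dimension count are known, the Weil~II bound ``mixed of weights $\leq r$'' already yields $\bigl|\sum_{\underline{a}\in k^r}\psi(g(\underline{a}))\bigr|\leq (d-1)^r|k|^{r/2}$. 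In short, the skeleton (trace formula, Artin--Schreier sheaf, concentration in middle degree, weight bound) is the standard and correct one, but every step where the Deligne-polynomial hypothesis actually does work has been deferred to an assertion, so the proposal is an outline of where a proof would live rather than a proof.
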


We apply Theorem \ref{deligne-1974} to 
the finite field $k_{\pi}$,
the integer $r=n+1$,
the character $\psi_{\infty}$,
and
each of 
$q^{\deg_T(\pi)} -1$
instances of polynomials $g$, derived from $H$, as explained in what follows.

Recall  that the Gauss sum
$$
\tau(\chi_{\pi}) :=
\ds\sum_{\alpha \in k_{\pi}} \chi_{\pi}(\alpha) \psi_{\infty} \left(\frac{\alpha}{\pi}\right)
$$
satisfies the Riemann Hypothesis (see, for example, \cite[Prop 11.5 p. 275]{IK04}):
$$
\left|
\tau(\chi_{\pi})
\right|
= q^{\frac{\deg_T(\pi)}{2}}.
$$
Note that for each $\underline{a} \, (\mod \pi) \in k_{\pi}^{n+1}$ such that $H(\underline{a})\not \equiv 0 \, (\mod\pi)$, we have a bijection 
\begin{eqnarray*}
k_{\pi}^{\ast} &\longrightarrow& k_{\pi}^{\ast},
\\
\alpha &\mapsto& \alpha H(\underline{a})^{-1}.
\end{eqnarray*}
Then, using properties of the Gauss sum (e.g. \cite[Prop. 8.2.2 p. 92]{IreRos90}), we obtain  
\begin{eqnarray}\label{sums-Gauss-propr}
S_H(\underline{w}, \chi_{\pi})
&=&
\frac{
\chi_{\pi}(-1) \tau(\chi_{\pi}) \tau\left(\overline{\chi_{\pi}}\right)
}{
q^{\deg_T(\pi)}
}
\ds\sum_{\underline{a}\, (\mod \pi)  \in k_{\pi}^{n+1}}
\chi_{\pi} \left(H(\underline{a})\right)
\psi_{\infty}\left(
- \frac{\underline{w} \cdot \underline{a}}{\pi}
\right)
\nonumber
\\
&=&
\frac{
\chi_{\pi}(-1) \tau(\chi_{\pi}) \tau\left(\overline{\chi_{\pi}}\right)
}{
q^{\deg_T(\pi)}
}
\ds\sum_{\substack{\underline{a}\, (\mod \pi) \in k_{\pi}^{n+1}\\H(\underline{a})\not \equiv 0 \, (\mod\pi)}}
\chi_{\pi} \left(H(\underline{a})\right)
\psi_{\infty}\left(
- \frac{\underline{w} \cdot \underline{a}}{\pi}
\right)
\nonumber
\\
&=&
\frac{
\chi_{\pi}(-1) \tau(\chi_{\pi}) 
}{
q^{\deg_T(\pi)}
}
\ds\sum_{\alpha \in k_{\pi}^{\ast}}
\ds\sum_{\substack{\underline{a}\, (\mod \pi) \in k_{\pi}^{n+1}\\H(\underline{a})\not \equiv 0 \, (\mod\pi)}}
\chi_{\pi} \left(\alpha^{-1}H(\underline{a})\right)
\psi_{\infty}\left(
\frac{\alpha - \underline{w} \cdot \underline{a}}{\pi}
\right)
\nonumber
\\
&=&
\frac{
\chi_{\pi}(-1) \tau(\chi_{\pi}) 
}{
q^{\deg_T(\pi)}
}
\ds\sum_{\beta \in k_{\pi}^{\ast}}
\chi_{\pi}(\beta)
\ds\sum_{\underline{a}\, (\mod \pi) \in k_{\pi}^{n+1}}
\psi_{\infty}\left(
\frac{\beta H(\underline{a}) - \underline{w} \cdot \underline{a}}{\pi}
\right).
\end{eqnarray}
In the last identity, in order to sum back in the contribution from $H(\underline{a}) \equiv 0 \, (\mod{\pi}),$ we used that
\[ 
\ds\sum_{\beta \in k_{\pi}^{\ast}}
\chi_{\pi}(\beta)
\ds\sum_{\substack{\underline{a}\, (\mod \pi) \in k_{\pi}^{n+1}\\H(\underline{a})  \equiv 0 \, (\mod\pi)}}
\psi_{\infty}\left(
\frac{\beta H(\underline{a}) - \underline{w} \cdot \underline{a}}{\pi}
\right)
    =\left(\ds\sum_{\beta \in k_{\pi}^{\ast}}
\chi_{\pi}(\beta)\right)
\left(
\ds\sum_{\substack{\underline{a}\, (\mod \pi) \in k_{\pi}^{n+1}\\H(\underline{a})  \equiv 0 \, (\mod\pi)}}
\psi_{\infty}\left(
\frac{  - \underline{w} \cdot \underline{a}}{\pi}
\right)\right)
=0,
\]
which follows from the orthogonality of the characters $\chi_\pi.$
 By taking absolute values in \eqref{sums-Gauss-propr}, 
we deduce that 
\begin{eqnarray}\label{S-to-g}
\left|S_H(\underline{w}, \chi_{\pi})\right|
\leq
q^{- \frac{\deg_T (\pi)}{2}}
\ds\sum_{\beta \in k_{\pi}^{\ast}}
\left|
\ds\sum_{\underline{a} \in k_{\pi}^{n+1}}
\psi_{\infty}\left(
\frac{\beta H(\underline{a}) - \underline{w} \cdot \underline{a}}{\pi}
\right)
\right|.
\end{eqnarray}
We estimate the inner sum above 
using Theorem \ref{deligne-1974}
for each of the polynomials over $k_{\pi}$ defined by the congruence
$$
g(X_0, \ldots, X_n) 
\equiv
\beta H(X_0, \ldots, X_n) - \underline{w} \cdot (X_0, \ldots, X_n) \, (\mod \pi)
$$
and for the additive character $\psi_{\infty}$.
Using that $\beta \neq 0$, $\deg_{\underline{X}} (H) = m \geq 2$, and $\pi \not\in {\mathcal{P}}_{\text{exc}}(H)$, 
we obtain that $\deg_{\underline{X}} (g) = m$. 
Let us write $g = g_0 + \cdots + g_m$, where $g_0$, $\ldots,$ $g_m$ are the uniquely determined homogeneous polynomials in $k_{\pi}[X_0, \ldots, X_n]$ such that $\deg_{\underline{X}} (g_i) = i$. Then
$g_m \equiv \beta H \, (\mod \pi)$.
By hypothesis, $H=0$ is nonsingular in $\mathbb{P}^n_{\overline{K}}$ and $\pi \not\in \mathcal{P}_\mathrm{exc}(H)$ so that $W_\pi$ is nonsingular; also   $\mathrm{char}\; \F_q \nmid \deg(H)$. Thus $H(\mod \pi)$ is a Deligne polynomial over $k_\pi$, and hence $g$ also is.
By \eqref{S-to-g} and Theorem \ref{deligne-1974},
we deduce that
\begin{eqnarray*}
\left|S_H(\underline{w}, \chi_{\pi})\right|
\leq
q^{- \frac{\deg_T(\pi)}{2}}
\ds\sum_{\beta \in k_{\pi}^{\ast}}
(m-1)^{n+1} q^{\frac{(n+1) \deg_T(\pi)}{2}}
<
(m-1)^{n+1} q^{\frac{(n+2) \deg_T(\pi)}{2}},
\end{eqnarray*}
which completes the proof for case (ii).

\subsection{Proof of part (iii) of Proposition \ref{bound-complete-char-sum}}

We consider the case $\underline{w} \not\equiv \underline{0} \, (\mod \pi)$, $\underline{w} \not\in {\cal{W}}_{\pi}^{\ast}$,
and seek to bound
\begin{equation*}\label{char-sum-part3}
\left|
S_H\left(
\underline{w}, \chi_{\pi}
\right)
\right|
=
\left|
\ds\sum_{
\underline{a} \, (\mod \pi) \in k_{\pi}^{n + 1}
}
\chi_{\pi}(H(\underline{a}))
\psi_{\infty} \left( - \frac{ \underline{w} \cdot \underline{a} }{\pi} \right)
\right|
\end{equation*}
in such a way that we improve upon the bound in part (ii).
Our main tool is the following estimate for non-singular mixed character sums, again due to Katz:
\begin{theorem}\label{katz-2007}(\cite[Thm. 1.1 p. 3]{Kat07})

\noindent
Let $k$ be a finite field, $r \geq 1$ an integer, 
$\chi: (k^{\ast}, \cdot) \longrightarrow (\C^{\ast}, \cdot)$ a non-principal multiplicative character, extended to $k$ by $\chi(0):= 0$,
and
$\psi: (k, +) \longrightarrow (\C^{\ast}, \cdot)$ a non-trivial additive character.
Let $f, g \in k[X_1, \ldots, X_r]$ be polynomials of degrees $d, e  \geq 1$ with leading homogeneous forms $f_d$ of degree $d$, $g_e$ of degree $e$ (respectively).
Assume that:
\begin{enumerate}
\item[(i)]
$f$ is a Deligne polynomial;
\item[(ii)]
$g$ is a Deligne polynomial;
\item[(iii)]
in case $r \geq 2$, the smooth hypersurfaces in $\P_k^{r-1}$ defined by $f_d = 0$ and by $g_e = 0$ 
are transverse,
in the sense that 
their intersection is smooth and of codimension $2$ in $\P_k^{r-1}$.
\end{enumerate}
Then
$$
\left|
\ds\sum_{
\underline{a} \in k^r
}
\chi(f(\underline{a}))
\psi (g(\underline{a}))
\right|
\ll_{r, d, e}
 |k|^{\frac{r}{2}}.
$$
\end{theorem}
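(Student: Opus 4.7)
The plan is to prove Katz's estimate via $\ell$-adic étale cohomology, specifically through the Grothendieck--Lefschetz trace formula, followed by the twin inputs of a cohomological vanishing theorem (to reduce to top-degree cohomology) and Deligne's purity theorem from Weil~II (to control eigenvalues of Frobenius). Fix a prime $\ell$ invertible in $k$, an embedding $\overline{\mathbb{Q}}_\ell \hookrightarrow \mathbb{C}$, and view $\chi$ and $\psi$ as taking values in $\overline{\mathbb{Q}}_\ell^\times$. Attach to $\chi$ the Kummer lisse sheaf $\mathcal{L}_\chi$ of rank $1$ on $\mathbb{G}_{m,k}$ and to $\psi$ the Artin--Schreier lisse sheaf $\mathcal{L}_\psi$ of rank $1$ on $\mathbb{A}^1_k$; both are pure of weight $0$. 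Viewing $f,g$ as morphisms $\mathbb{A}^r_k \to \mathbb{A}^1_k$, form
\[
\mathcal{F} := f^\ast \mathcal{L}_\chi \otimes g^\ast \mathcal{L}_\psi
\]
on the open subset $U := f^{-1}(\mathbb{G}_m) \subseteq \mathbb{A}^r_k$, extended by zero to $\mathbb{A}^r_k$. The Grothendieck--Lefschetz trace formula then rewrites the character sum as
\[
\sum_{\underline{a} \in k^r} \chi(f(\underline{a}))\,\psi(g(\underline{a})) \;=\; \sum_{i=0}^{2r} (-1)^i \operatorname{Tr}\bigl(\mathrm{Frob}_k \,\big|\, H^i_c(\mathbb{A}^r_{\bar{k}}, \mathcal{F})\bigr).
\]

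The body of the argument is to show that $H^i_c(\mathbb{A}^r_{\bar k},\mathcal{F}) = 0$ for $i \neq r$, so that only the top term survives, and simultaneously to bound $\dim H^r_c$ by a constant $c(r,d,e)$ depending only on $r,d,e$. This cohomological vanishing is the heart of the matter and is where each of the three hypotheses enters crucially. The Deligne conditions on $f$ and $g$ individually force the sheaf $\mathcal{F}$ to be lisse and of generic rank $1$ on a large open subset and to have controlled, tame ramification along the divisor at infinity after compactifying $\mathbb{A}^r \hookrightarrow \mathbb{P}^r$; the transversality condition (iii) on the leading forms $f_d, g_e$ guarantees that the wild/ramified locus at infinity has the expected codimension $2$ and that $\mathcal{F}[r]$ extends to a perverse sheaf with no excess cohomology. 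With this structural input, the vanishing $H^i_c = 0$ for $i < r$ follows from Artin vanishing for perverse sheaves on the affine variety $\mathbb{A}^r$, while the vanishing of $H^i_c$ for $i > r$ reduces to showing that $\mathcal{F}$ has no geometric summand that is already trivial along generic hyperplane sections, which is controlled by non-triviality of $\chi$ or $\psi$ along generic lines.

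To bound $\dim H^r_c$, I would use the Euler--Poincaré characteristic: under the vanishing just established,
\[
\dim H^r_c(\mathbb{A}^r_{\bar k},\mathcal{F}) \;=\; (-1)^r \chi_c(\mathbb{A}^r_{\bar k},\mathcal{F}).
\]
The Euler characteristic can then be computed/bounded by a Bezout-type count involving only the degrees $d,e$ and the dimension $r$, either by the Grothendieck--Ogg--Shafarevich formula applied after fibering by a generic projection to $\mathbb{A}^1$, or by induction on $r$ slicing by generic hyperplane sections (where transversality ensures the inductive hypothesis is preserved). This yields a bound of the form $\dim H^r_c \ll_{r,d,e} 1$.

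Finally, Deligne's purity theorem in Weil~II asserts that because $\mathcal{L}_\chi$ and $\mathcal{L}_\psi$ are pure of weight $0$, the sheaf $\mathcal{F}$ is mixed of weights $\leq 0$, and thus $H^r_c(\mathbb{A}^r_{\bar k},\mathcal{F})$ is mixed of weights $\leq r$. Consequently every eigenvalue $\alpha$ of $\mathrm{Frob}_k$ on this space satisfies $|\alpha| \leq |k|^{r/2}$, and combining with the dimension bound yields the stated estimate $\ll_{r,d,e} |k|^{r/2}$. The main obstacle in this program is unquestionably the cohomological vanishing of Step~2, where the transversality hypothesis must be leveraged to rule out wild ramification jumps that would otherwise inflate both the dimension and the weights of $H^r_c$; everything else (trace formula, purity, Euler characteristic) is standard once that structural input is in place.
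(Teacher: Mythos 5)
This statement is not proved in the paper at all: it is quoted verbatim from Katz \cite[Thm.~1.1]{Kat07} and used as a black-box input in the proof of part (iii) of Proposition \ref{bound-complete-char-sum}. So what you have written is an attempt to reprove Katz's theorem itself. The framework you describe --- the Grothendieck--Lefschetz trace formula applied to $f^{*}\mathcal{L}_{\chi}\otimes g^{*}\mathcal{L}_{\psi}$ extended by zero from $\{f\neq 0\}$, vanishing of $H^{i}_{c}$ outside the middle degree $i=r$, a Betti-number bound depending only on $r,d,e$, and Deligne's Weil~II weight estimate --- is indeed the machinery underlying Katz's argument, and the purity step and the Euler-characteristic/Betti-number step are standard once the vanishing is known.

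The genuine gap is that the crucial vanishing is asserted rather than proved, and it is exactly there that hypotheses (i)--(iii) must do all the work. The vanishing of $H^{i}_{c}$ for $i<r$ is comparatively soft (the extension by zero of the shifted lisse sheaf along the affine open $\{f\neq 0\}$ is perverse, so Artin vanishing applies, and in any case low degrees contribute only smaller powers of $|k|$), but the bound $\ll_{r,d,e}|k|^{r/2}$ is destroyed unless $H^{i}_{c}=0$ for all $i>r$, since Weil~II only gives eigenvalue bounds $|k|^{i/2}$ in degree $i$. Your proposal handles this by declaring that transversality ``guarantees that the wild/ramified locus at infinity has the expected codimension~$2$ and that $\mathcal{F}[r]$ extends to a perverse sheaf with no excess cohomology,'' and that the vanishing for $i>r$ ``reduces to'' non-triviality of $\chi$ or $\psi$ along generic lines; neither claim is substantiated, and neither is a statement one can simply quote. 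The Artin--Schreier factor is wildly ramified along the hyperplane at infinity, and controlling its interaction with the Kummer factor there, using the Deligne conditions on $f_{d}$ and $g_{e}$ together with the transversality of the two leading-form hypersurfaces, is precisely the content of Katz's proof (carried out by a careful analysis at infinity and induction via hyperplane slicing). Without that step your argument does not yield the stated estimate. If the goal is to use the result, citing \cite{Kat07} as the paper does is the correct course; if the goal is to prove it, the missing vanishing argument must be supplied, and the dimension bound for $H^{r}_{c}$ should likewise be made precise (e.g.\ via Katz's or Bombieri-type Betti number bounds) rather than sketched.
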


We apply Theorem \ref{katz-2007} to the finite field $k_{\pi}$,
the integer $r=n+1$,
 the characters $\psi_{\infty}$,  $\chi_{\pi}$, 
 and
 the polynomials over $k_{\pi}$ defined by the congruences
\begin{eqnarray*}
f(X_0, \ldots, X_n) 
&\equiv&
 H(X_0, \ldots, X_n) \, (\mod \pi),\\
 g(X_0, \ldots, X_n) 
&\equiv &
- w_0 X_0 - w_1 X_1 - \cdots - w_n X_n \, (\mod \pi).
\end{eqnarray*}
Note 
that $f(\mod \pi)$ is homogeneous of degree 
$\deg_{\underline{X}} (H) = m \geq 2$ (because $\pi \not\in {\mathcal{P}}_{\text{exc}}(H)$)  
and
that $g(\mod \pi)$ is homogeneous of degree $1$ (because $\underline{w} \not\equiv 0 \, (\mod \pi)$). 
Recalling that $\mathrm{char}\, \F_q \nmid \deg_{\underline{X}} (H)$ by hypothesis, we deduce that $f(\mod \pi)$ and $g(\mod \pi)$ are indeed 
Deligne polynomials.
Since $\underline{w} \not\in {\cal{W}}_{\pi}^{\ast}$, they are also transverse. 
Thus Theorem \ref{katz-2007} applies, giving
$$
\left|
S_H(\underline{w}, \chi_{\pi})
\right|
\ll_{n, m}
q^{\frac{(n+1) \deg_T(\pi)}{2}}.
$$
With this, we  completed the verification of all  Weil-Deligne bounds of Proposition \ref{bound-complete-char-sum}.

\section{Application of the Weil-Deligne bounds to  the unramified sieve term}\label{sec_WD_app}

In this section, we apply the Weil-Deligne bounds of \S \ref{sec_Weil_Deligne} to the unramified sieve term (given by the right hand side of \eqref{identity-complete1-unram}).

\begin{proposition}\label{prop_unram}

Let $q$ be an odd rational prime power, 
$n \geq 2$ an integer, 
$\ell \geq 2$  a rational prime, 
and 
$F \in \cal{O}_K[X_0, \ldots, X_n]$ a homogeneous polynomial of degree $m \geq 2$ in $X_0, \ldots, X_n$, with $\mathrm{char} K \nmid m$,
where, as before, $K = \F_q(T)$.
Assume the conditions:
\begin{enumerate}
\item[(i)]
$\ell \mid \gcd(m, q-1)$;
\item[(ii)]
the projective hypersurface $F(X_0,\ldots,X_n)=0 \subset \mathbb{P}^n_{\overline{K}}$ is nonsingular. 
\end{enumerate}
Let $b, \Del > 0$ be integers 
such that 
\beq\label{b_Del}
\Del < b < 2 \Del.
\eeq

Defining $\Pcal$ as in (\ref{finite-set-primes}),
\begin{multline}\label{identity-complete1-unram_2}
\frac{q^{-(n+1) ( 2 \Delta-b)}}{|\Pcal|^2}\sum_{\substack{
\pi_1, \pi_2 \in {\cal{P}}
\\
\pi_1 \neq \pi_2
}
}
\ds\sum_{\substack{
\chi_{\pi_1} \neq \chi_0
\\
\chi_{\pi_2} \neq \chi_0
}
}
\ds\sum_{\substack{
\underline{x} \in \cal{O}_K^{n+1}  
\\
\deg_T (\underline{x}) < 2\Delta - b
}
}
|S_F \left(\bar{\pi}_2 \underline{x}, \chi_{\pi_1}\right)
S_F \left(\bar{\pi}_1 \underline{x}, \chi_{\pi_2}\right)|\\
\ll_{\ell,m,n \deg_{T} (F^*)}
  b^2 q^{n\Del} + q^{(n+1)\Delta}+q^{(n+1)b-\Delta}.    
\end{multline}

\end{proposition}

To prove Proposition \ref{prop_unram},
 we will estimate 
the absolute value of the innermost term
according to different cases of $\underline{x}$
suggested by Proposition \ref{bound-complete-char-sum}.

\subsection{Proof of Proposition \ref{prop_unram}: dissecting the inner sum  into cases}
Let $\pi_1, \pi_2 \in \Pcal$ with $\pi_1 \neq \pi_2$
and 
let $\chi_{\pi_1} ,\chi_{\pi_2} \neq \chi_0$ be fixed. We dissect the set 
$\{\underline{x} \in \cal{O}_K^{n+1}: \deg_T (\underline{x}) < 2\Delta - b\}$ 
into subsets suggested by Proposition \ref{bound-complete-char-sum}, as follows.  

 For each $i \in \{ 1, 2\}$, 
 we consider the smooth projective hypersurface
 $$
 W(F)_{\pi_i}: \quad  F(X_0, \ldots, X_n) \equiv 0 \, (\mod \pi_i).
 $$
 By  part (iii) of Proposition \ref{projective-geometric-lemma}, 
  the reduction modulo $\pi_i$ of $F^{\ast}$ remains absolutely irreducible
 and 
 defines the projective dual  $W(F)_{\pi_i}^{\ast}$, that is,
 $$
 W(F)_{\pi}^{\ast}: \quad F^{\ast}(X_0, \ldots, X_n) \equiv 0 \, (\mod \pi_i).
 $$
 We denote by 
 $${\mathcal{W}}(F)_{\pi_i},  \; {\mathcal{W}}(F)_{\pi_i}^{\ast}$$
 the {\emph{affine}} varieties defined by the
 homogenous polynomials 
 $F\, (\mod \pi_i)$, $F^{\ast}\, (\mod \pi_i)$
 (respectively).
We partition
$\{\underline{x} \in \cal{O}_K^{n+1}: \deg_T (\underline{x}) < 2\Delta - b\}$
according to the following cases:
\begin{enumerate}
\item[(C1)]
$\bar{\pi}_1 \underline{x} \not\in {\cal{W}}(F)_{\pi_2}^{\ast}$,
$\bar{\pi}_2 \underline{x} \not\in {\cal{W}}(F)_{\pi_1}^{\ast}$ (good-good);
\item[(C2)]
$\bar{\pi}_1 \underline{x} \not\in {\cal{W}}(F)_{\pi_2}^{\ast}$,
$\bar{\pi}_2 \underline{x} \in {\cal{W}}(F)_{\pi_1}^{\ast}$ (good-bad);
\item[(C2')]
$\bar{\pi}_1 \underline{x} \in {\cal{W}}(F)_{\pi_2}^{\ast}$,
$\bar{\pi}_2 \underline{x} \not\in {\cal{W}}(F)_{\pi_1}^{\ast}$ (bad-good);
\item[(C3)]
$\bar{\pi}_1 \underline{x} \in {\cal{W}}(F)_{\pi_2}^{\ast}$,
$\bar{\pi}_2 \underline{x} \in {\cal{W}}(F)_{\pi_1}^{\ast}$ (bad-bad).
\end{enumerate}

In the above partition, the case $\bar{\pi}_1 \underline{x} \in {\cal{W}}(F)_{\pi_2}^{\ast}$ includes the subcase
in which $\bar{\pi}_1\underline{x} \equiv \underline{0}\, (\mod{\pi_2})$;
 similarly, the case 
$\bar{\pi}_2 \underline{x} \in {\cal{W}}(F)_{\pi_1}^{\ast}$ includes the subcase in which $\bar{\pi}_2\underline{x} \equiv \underline{0}\, (\mod{\pi_1})$.
Thus the condition $\bar{\pi}_1 \underline{x} \not \in {\cal{W}}(F)_{\pi_2}^{\ast}$ implies that $\bar{\pi}_1\underline{x} \not \equiv \underline{0}\, (\mod{\pi_2})$; similarly, $\bar{\pi}_2 \underline{x} \not \in {\cal{W}}(F)_{\pi_1}^{\ast}$
implies that $\bar{\pi}_2\underline{x} \not \equiv \underline{0}\, (\mod{\pi_1})$.
 We will treat each case separately; we begin with the bad-bad case (C3), which is most difficult. Here we will crucially use nontrivial averaging over the primes $\pi_1,\pi_2$ in order to produce an efficient upper bound. The good-good case sums over $\underline{x}$ and $\pi$ more trivially, but this is allowable because of the square-root cancellation achieved in case (iii) of Proposition \ref{bound-complete-char-sum}. The strategy for bounding the good-bad case is a hybrid of these two methods. (The case (C2') is analogous to the case (C2), and thus we only explicitly describe the treatment of (C2).) 

\subsection{Proof of Proposition \ref{prop_unram}: the bad-bad case (C3)}
We break the left-hand side of (\ref{identity-complete1-unram_2}) into two sums $\Sigma_1 + \Sigma_2,$   according to whether the sum of $\un{x} \in \mathcal{O}_K^{n+1}$ takes place over the $\underline{x}$ satisfying $F^*(\underline{x})\neq 0$ or $ F^*(\underline{x})= 0$; that is to say,
\begin{align*}
   \Sigma_1 & = 
\frac{q^{-(n+1) (2 \Delta-b)}}{|\Pcal|^2}\sum_{\substack{
\pi_1, \pi_2 \in {\cal{P}}
\\
\pi_1 \neq \pi_2
}
}
\ds\sum_{\substack{
\chi_{\pi_1} \neq \chi_0
\\
\chi_{\pi_2} \neq \chi_0
}
}
\ds\sum_{\substack{
\underline{x} \in \cal{O}_K^{n+1}  
\\
\deg_T (\underline{x}) < 2\Del - b \\ \text{$F^*(\un{x}) \neq 0$, case (C3)}
}
}
|S_F \left(\bar{\pi}_2 \underline{x}, \chi_{\pi_1}\right)
S_F \left(\bar{\pi}_1 \underline{x}, \chi_{\pi_2}\right)| \\
    \Sigma_2 & = 
\frac{q^{-(n+1) (2 \Delta-b)}}{|\Pcal|^2}\sum_{\substack{
\pi_1, \pi_2 \in {\cal{P}}
\\
\pi_1 \neq \pi_2
}
}
\ds\sum_{\substack{
\chi_{\pi_1} \neq \chi_0
\\
\chi_{\pi_2} \neq \chi_0
}
}
\ds\sum_{\substack{
\underline{x} \in \cal{O}_K^{n+1}  
\\
\deg_T (\underline{x}) < 2\Del - b\\ \text{$F^*(\un{x}) = 0,$ case (C3)}
}
}
|S_F \left(\bar{\pi}_2 \underline{x}, \chi_{\pi_1}\right)
S_F \left(\bar{\pi}_1 \underline{x}, \chi_{\pi_2}\right)|.
\end{align*}
Within each term, the subscript case (C3) means that we restrict to those $\pi_1,\pi_2, \un{x}$  such that case (C3) holds.
In $\Sigma_1,$  we note that by applying cases (i) and (ii) of Proposition \ref{bound-complete-char-sum},
\beq\label{Sigma_1_start}
  |\Sigma_1| \ll_{\ell,m,n}
\frac{q^{-(n+1) (2 \Delta-b)}q^{(n+2)\Del}}{|\Pcal|^2}
\ds\sum_{\substack{
\underline{x} \in \cal{O}_K^{n+1}  
\\
\deg_T (\underline{x}) < 2\Del - b \\F^*(\un{x}) \neq 0
}
}
 \sum_{\substack{
\pi_1, \pi_2 \in {\cal{P}}
\\
\pi_1 \neq \pi_2 \\ \text{case (C3)}
}
} 1.\eeq
Here we used that for each $\pi_i$, the number of characters of order $\ell$ modulo $\pi_i$ is $\ell$. 
To bound this efficiently, we will use the fact that $F^*(\un{x}) \neq 0$ in order to show that relatively few pairs of $\pi_1,\pi_2$ can correspond to the bad-bad case.  

In contrast, in $\Sigma_2$,  since $F^*(\un{x})=0$, then certainly $\un{x}$ is ``bad'' for all $\pi_i$, since $F^*(\un{x}) \equiv 0 \, (\mod{\pi_1})$ for all $\pi_i \in \Pcal$. Thus, by applying cases (i) and (ii) of Proposition \ref{bound-complete-char-sum}, we write
\beq\label{Sigma_2_start} |\Sigma_2| \ll_{\ell,m,n} 
 q^{-(n+1) (2 \Delta-b)}q^{(n+2)\Delta}
\ds\sum_{\substack{
\underline{x} \in \cal{O}_K^{n+1}  
\\
\deg_T (\underline{x}) < 2\Del - b\\  F^*(\un{x}) = 0}
}
 1.\eeq
The heart of the argument in the bad-bad case is thus to count efficiently those $\un{x}$ for which $F^*(\un{x})=0$. We return to this momentarily.

\subsubsection{Bounding $\Sigma_1$}
 To bound $\Sigma_1$, we begin with (\ref{Sigma_1_start}).  Case (C3) requires that $\bar{\pi}_1 \un{x} \in \mathcal{W}(F)_{\pi_2}^*$, so that by homogeneity $\pi_2 | F^*(\un{x})$, and analogously $\pi_1 | F^*(\un{x})$. Consequently, the innermost sum over $\pi_1 \neq \pi_2$ in (\ref{Sigma_1_start})  is bounded by 
 \[ \# \{ \pi_1 \neq \pi_2 \in \Pcal : \pi_1\pi_2 | F^*(\un{x}) \} \leq (\omega (F^*(\un{x})))^2 , \]
 where   we  let $\omega( y)$ denote the number of distinct prime divisors of an element $y \in \mathcal{O}_K$.
 Now let $\deg_T(F^*)$ denote the largest degree of $T$ that appears in a coefficient of $F^*.$ Then for $\un{x} \in \mathcal{O}_K^{n+1}$ with $\deg_T(\un{x})< 2\Del-b$ and  $F^*(\un{x}) \neq 0$,
 \beq\label{omega_bound} \omega(F^*(\un{x}))  \leq \deg_T (F^*)  + \deg_{\un{X}} F^* \cdot \deg_T (\un{x}) \leq \deg_T(F^*) + m(m-1)^{n-1} (2\Del-b) ,\eeq  
where in the last inequality we applied Proposition \ref{dual-of-hypersurface} (3) to bound $\deg_{\underline{X}}(F^*)$.
We also note that $2\Del-b< b$ under the hypothesis (\ref{b_Del}). In conclusion,
\[ |\Sigma_1| \ll_{\ell,m,n, \deg_T(F^*)} b^2 q^{-(n+1) (2 \Delta-b)} q^{-2\Del} q^{(n+2)\Delta} q^{(n+1)(2\Del-b)}  \ll_{\ell,m,n,\deg_T(F^*)} b^2 q^{n\Del} .\]

\subsubsection{Bounding $\Sigma_2$}
Now we turn to bounding $\Sigma_2$, starting from (\ref{Sigma_2_start}). 
  The following lemma is the main tool for bounding $\Sigma_2$ in the  bad-bad case (C3), as well as for bounding an analogous sum in the good-bad cases (C2) and (C2'); we defer its proof to \S \ref{sec_grateful}. (It is also possible to apply \cite[Lemma 2.9]{BrVi15}; we nevertheless include the more flexible lemma below, in case of independent interest.) 
 
\begin{lemma} \label{gratefultothereferee}
 Let $G\in \mathcal{O}_K[X_0,\dots,X_n]$ be an irreducible homogeneous polynomial of degree $\deg_{\underline{X}} G\geq 2$, and let $L\geq N\geq 1$ be such that there is an irreducible polynomial $\pi$ of degree $L$ such that $G(\underline{X})$ remains irreducible in $k_\pi$. Then 
\[\sum_{\substack{\underline{x}\in \mathcal{O}_K^{n+1}\\ \deg_T(\underline{x}) <N\\G(\underline{x})=0}}1 \ll_{n,\deg_{\underline{X}}(G)} q^{(n+1)N-L}+q^{(n-1)L}.\]
 \end{lemma}
 We remark that arguing as in the proof of Proposition \ref{projective-geometric-lemma}, Part (iii.1) implies that for all but finitely many $\pi$, $G(\underline{X})$ remains irreducible in $k_\pi$. Thus, choosing $L$ sufficiently large relative to a fixed polynomial $G$ of interest, the conditions of Lemma \ref{gratefultothereferee} will be met. 
 
 We are now ready to bound $\Sigma_2$. Apply Lemma \ref{gratefultothereferee} to  \eqref{Sigma_2_start} with $G=F^*$ ( recall $\deg_{\underline{X}}(F^*)\ll_{m,n}1$ by Proposition \ref{dual-of-hypersurface} (3)), and the choices $N=2\Del-b$ and $L=\Del$ (note that $N\leq L$ by (\ref{b_Del})). We obtain 
\begin{align*} |\Sigma_2| \ll_{\ell,m,n} &
 q^{(n+1) (b - 2 \Delta)}q^{(n+2)\Delta}
\left(q^{(n+1)(2\Delta-b)-\Delta}+q^{(n-1)\Delta} \right)\\
\ll_{\ell,m,n} & q^{(n+1)\Delta}+q^{(n+1)b-\Delta}.
\end{align*}
This is valid as long as there is a prime $\pi \in \mathcal{O}_K$ of degree $\Del$  for which $F^*$ is irreducible modulo $\pi$. Under the hypothesis of Proposition \ref{prop_unram}, this  this will be true for all $\Del$ sufficiently large, say 
\beq\label{L_lower_Fstar}
 \Del \geq L_0(F^*)
\eeq
for a finite parameter provided by Proposition \ref{projective-geometric-lemma}, Part (iii.1); we will ensure this with our final choice of $\Del$ in (\ref{final-choice-Delta}); see \S \ref{sec_choices}.  
Combining the bounds for $\Sigma_1$ and $\Sigma_2$, we obtain that (under (\ref{L_lower_Fstar})) the total contribution of the bad-bad case (C3) to the left-hand side of (\ref{identity-complete1-unram_2}) is
\beq\label{badbadfinal}\ll_{\ell, m,n,\deg_T(F^*)} b^2q^{n\Del}+ q^{(n+1)\Delta}+q^{(n+1)b-\Delta}.\eeq
Note that, in order for this to be strictly better than the trivial bound $\ll q^{(n+1)b}$,
we must have that 
\begin{equation}\label{Delta-less-b}
\Delta < b. 
\end{equation}
Combined with \eqref{assume:b<2D}, this motivates the hypothesis $\Delta < b < 2\Delta$ we currently assume.


  \subsection{Proof of Proposition \ref{prop_unram}: the good-good case (C1)}

When $\underline{x}$ is in case (C1), we apply part (iii) of Proposition \ref{bound-complete-char-sum} to estimate each of the character sums $S_F \left(\bar{\pi}_2 \underline{x}, \chi_{\pi_1}\right)$, $S_F \left(\bar{\pi}_1 \underline{x}, \chi_{\pi_2}\right)$. We obtain

\[\frac{q^{-(n+1) (2 \Delta-b)}}{|\Pcal|^2}\sum_{\substack{
\pi_1, \pi_2 \in {\cal{P}}
\\
\pi_1 \neq \pi_2
}
}
\ds\sum_{\substack{
\chi_{\pi_1} \neq \chi_0
\\
\chi_{\pi_2} \neq \chi_0
}
}
\ds\sum_{\substack{
\underline{x} \in \cal{O}_K^{n+1}  
\\
\deg_T (\underline{x}) < 2\Delta - b
\\
\underline{x} \; \text{in case (C1)}
}
}
\left|S_F \left(\bar{\pi}_2 \underline{x}, \chi_{\pi_1}\right)
S_F \left(\bar{\pi}_1 \underline{x}, \chi_{\pi_2}\right)
\right|
\ll_{\ell, m, n}
q^{(n+1) (b-\Delta)}N_1,\]
where 
\begin{align*}
N_1 & :=\max_{\pi_1 \neq \pi_2 \in \mathcal{P}}
\#\left\{
\underline{x} \in \cal{O}_K^{n+1}:
\deg_T(\underline{x}) < 2\Delta - b,
\bar{\pi}_2 \underline{x} \not\in {\mathcal{W}}(F)^{\ast}_{\pi_1},
\bar{\pi}_1 \underline{x} \not\in {\mathcal{W}}(F)^{\ast}_{\pi_2}
\right\},
\end{align*}
and we again used that for each $\pi_i$, the number of characters of order $\ell$ modulo $\pi_i$ is $\ell$. 
Note that
\begin{align*}
N_1
 \leq \#\left\{
\underline{x} \in \cal{O}_K^{n+1}:
\deg_T(\underline{x}) < 2\Delta - b
\right\} 
 \leq q^{\left(2\Delta - b \right) (n+1)}.
\end{align*}
Thus the total contribution of $\underline{x}$ in case (C1) into the unramified sieve term is 
\[ \ll_{\ell,m, n}  q^{(n+1) \Delta},\] which we note is comparable to a term in the bad-bad contribution (\ref{badbadfinal}).

\subsection{Proof of Proposition \ref{prop_unram}: cases (C2) and  (C2')}

As in the case (C3), we break the sum in \eqref{identity-complete1-unram_2} into two sums $\Sigma_1+\Sigma_2$, according to whether the sum of $\underline{x}\in \mathcal{O}_K^{n+1}$ takes place over the $\underline{x}$ satisfying $F^*(\underline{x})\not = 0$ or $F^*(\underline{x})= 0$. We define
\[   \Sigma_1  = 
\frac{q^{-(n+1) (2 \Delta-b)}}{|\Pcal|^2}\sum_{\substack{
\pi_1, \pi_2 \in {\cal{P}}
\\
\pi_1 \neq \pi_2
}
}
\ds\sum_{\substack{
\chi_{\pi_1} \neq \chi_0
\\
\chi_{\pi_2} \neq \chi_0
}
}
\ds\sum_{\substack{
\underline{x} \in \cal{O}_K^{n+1}  
\\
\deg_T (\underline{x}) < 2\Del - b \\ \text{$F^*(\un{x}) \neq 0$, case (C2)}
}
}
|S_F \left(\bar{\pi}_2 \underline{x}, \chi_{\pi_1}\right)
S_F \left(\bar{\pi}_1 \underline{x}, \chi_{\pi_2}\right)|.\]
The sum $\Sigma_2$ is formally defined analogously, with the condition $F^*(\underline{x})\neq 0$ replaced by the condition $F^*(\underline{x}) =0.$ However, note that under the conditions of case (C2) the sum in $\Sigma_2$ is empty. Indeed, if $F^*(\underline{x})=0$ then \emph{all} primes are bad (that is to say, $\underline{x} \in \mathcal{W}(F)_\pi^* $ for all $\pi$ in the sieving set), whereas in case (C2) at least one prime is good (since by assumption $\overline{\pi}_1 \underline{x} \not\in \mathcal{W}(F)^*_{\pi_2}$).

It only remains to bound $\Sigma_1$, which we accomplish by using cases (i) and (ii) of Propositions \ref{bound-complete-char-sum} and (\ref{omega_bound}) as before, to conclude that
\[|\Sigma_1|\ll_{\ell,m,n, \deg_T(F^*)} b q^{\frac{(2n-1)\Delta}{2}}.\]
This is clearly dominated by the bad-bad contribution (\ref{badbadfinal}).
 The same arguments for $\Sigma_1,\Sigma_2$ apply for the (C2') contribution.
 
To recap, the work above for  cases (C1), (C2), (C2'), (C3),  
has shown that the unramified sieve term 
satisfies the bound claimed in Proposition \ref{prop_unram}. 
All that remains is to prove the counting result in Lemma \ref{gratefultothereferee}, to which we now turn.

\subsection{Proof of Lemma \ref{gratefultothereferee}}\label{sec_grateful}
Let $\pi$ be as in the statement of the proposition. We have that 
\[\sum_{\substack{\underline{x}\in \mathcal{O}_K^{n+1}\\ \deg_T(\underline{x}) <N\\G(\underline{x})=0}}1\leq \sum_{\substack{\underline{x}\in \mathcal{O}_K^{n+1}\\ \deg_T(\underline{x}) <N\\G(\underline{x})=0 \, (\mod \pi) }}1.\]
We complete the sum, by counting for each $\alpha \in k_\pi^{n+1}$  such that $G(\alpha)=0 \, (\mod \pi)$, those $\underline{x}$ with $\deg_T(\underline{x})<N$ such that $\underline{x}=\underline{\alpha}\, (\mod \pi)$:
\begin{align}\label{sum}
\sum_{\substack{\underline{x}\in \mathcal{O}_K^{n+1}\\ \deg_T(\underline{x}) <N\\G(\underline{x})=0 \, (\mod \pi) }}1
=&\sum_{\substack{\underline{\alpha}\in k_\pi^{n+1}\\G(\underline{\alpha})=0 \, (\mod \pi) }}  \sum_{\substack{\underline{x}\in \mathcal{O}_K^{n+1}\\ \deg_T(\underline{x}) <N}}\frac{1}{q^{(n+1)L}}\sum_{\underline{\beta} \in k_\pi^{n+1}} \psi_\infty \left(\frac{\underline{\beta}\cdot (\underline{\alpha}-\underline{x})}{\pi}\right)\nonumber\\
=&\frac{1}{q^{(n+1)L}} \sum_{\underline{\beta} \in k_\pi^{n+1}} \sum_{\substack{\underline{x}\in \mathcal{O}_K^{n+1}\\ \deg_T(\underline{x}) <N}}  \psi_\infty \left(\frac{-\underline{\beta}\cdot \underline{x}}{\pi}\right)\sum_{\substack{\underline{\alpha}\in k_\pi^{n+1}\\G(\underline{\alpha})=0 \, (\mod \pi) }} \psi_\infty \left(\frac{\underline{\beta}\cdot \underline{\alpha}}{\pi}\right),
\end{align}
where the additive character $\psi_\infty(\cdot/\pi)$ is defined in \eqref{add_char} and we have applied Lemma \ref{lemma_count_mod}.
We remark that 
\[\sum_{\substack{\underline{x}\in \mathcal{O}_K^{n+1}\\ \deg_T(\underline{x}) <N}}  \psi_\infty \left(\frac{-\underline{\beta}\cdot \underline{x}}{\pi}\right)=\prod_{j=0}^n \left(\sum_{\substack{x_j \in \mathcal{O}_K\\\deg_T(x_j)<N}}\psi_\infty \left(\frac{-\beta_jx_j}{\pi}\right)  \right)=\prod_{j=0}^n \left(\sum_{\substack{x_j \in \mathcal{O}_K\\\deg_T(x_j)<N}}\psi_\infty \left(-\Tr_{k_\pi/\F_q}(\beta_j x_j)\right)  \right).\]
 Thus we can work one coordinate at a time. Fix any $\be_j \in k_\pi.$ Write $k_\pi =\F_q[\rho]$, where $\rho$ is a root of $\pi$. Then $\{1,\rho \dots, \rho^{L-1}\}$ is a basis of $k_\pi$ as an $\F_q$-vector space. Thus, $x\in \mathcal{O}_K$  with $\deg_T(x)<N$ can be expressed as $x=a_0+a_1\rho+\cdots+a_{N-1}\rho^{N-1}$ with the coefficients $a_\ell \in \F_q$ uniquely determined; this leads to 
\begin{equation}\label{traces}\sum_{\substack{x_j \in \mathcal{O}_K\\\deg_T(x_j)<N}}\psi_\infty \left(-\Tr_{k_\pi/\F_q}(\beta_j x_j)  \right)=\prod_{\ell=0}^{N-1} \left(\sum_{a_\ell \in \F_q} \psi_\infty \left(-\Tr_{k_\pi/\F_q}a_\ell(\rho^\ell \beta_j)  \right) \right).\end{equation}
Also notice that we have for each fixed $\ell$ that
\[\sum_{a_\ell \in \F_q} \psi_\infty \left(-\Tr_{k_\pi/\F_q}a_\ell(\rho^\ell \beta_j)  \right)= \begin{cases}
  q & \text{if} \, \Tr_{k_\pi/\F_q} (\rho^\ell \beta_j)=0,\\
  0 & \text{otherwise}.
\end{cases}\]
Combining the above with \eqref{traces}, we obtain 
\[\sum_{\substack{x_j \in \mathcal{O}_K\\\deg_T(x_j)<N}}\psi_\infty \left(-\Tr_{k_\pi/\F_q}(\beta_j x_j)  \right)= \begin{cases}
  q^{N} & \text{if} \, \Tr_{k_\pi/\F_q} (\rho^\ell \beta_j)=0\, \text{for}\, \ell=0,\dots, N-1,\\
  0 & \text{otherwise}.
\end{cases}\]
 Define for any $0\leq N \leq L$,
\[S_N=\{\beta \in k_\pi\, :\, \Tr_{k_\pi/\F_q} (\rho^\ell \beta)=0,\, \text{for}\, \ell=0,\dots, N-1\}.\]
We claim that $\# S_N=q^{L-N}$. To see this, consider for a given $0 \leq \ell \leq L-1$,
\[H_\ell :=\{\beta \in k_\pi \, : \, \Tr_{k_\pi/\F_q} (\rho^\ell \beta)=0\}.\]
Note that $H_\ell$ is a hyperplane in the vector space $k_\pi$ for any $\ell=0,\dots,L-1$. We will prove that $S_L=\cap_{\ell=0}^{L-1} H_\ell =\{0\}$. Indeed, if $\gamma \in S_L$, then, since $\{1,\rho, \dots, \rho^{L-1}\}$ is a basis for $k_\pi$, linearity of trace implies that $\Tr_{k_\pi/\F_q}(\gamma y)=0$ for all $y \in k_\pi$. Now the trace pairing is non-degenerate if and only if the extension is separable  (see, for example, \cite[Ch.1, Sec.5.2]{Janusz}), and since $\F_q$ is perfect, we conclude that $\gamma=0$. Since $\#S_0=q^L$ and $\#S_L=1$, we conclude that each hyperplane $H_{\ell+1}$ lowers the dimension by exactly $1$ when going from $S_{\ell}$ to $S_{\ell+1}=S_{\ell}\cap H_\ell$ as long as $\ell+1\leq L$. Once we reach $\ell\geq L$, the dimension remains 0. Thus, in particular for a given $N \leq L$, we conclude that $\# S_N=q^{L-N}$.

Back to the identity in \eqref{sum}, we have shown that 
\begin{align*}\sum_{\substack{\underline{x}\in \mathcal{O}_K^{n+1}\\ \deg_T(\underline{x}) <N\\G(\underline{x})=0 \, (\mod \pi) }}1
= & \frac{q^{(n+1)N}}{q^{(n+1)L}} \sum_{ \underline{\beta} \in k_\pi^{n+1}}\mathbf{1}_{\underline{\beta} \in S_N^{n+1}} \sum_{\substack{\underline{\alpha}\in k_\pi^{n+1}\\G(\underline{\alpha})=0 \, (\mod \pi) }} \psi_\infty \left(\frac{\underline{\beta}\cdot \underline{\alpha}}{\pi}\right) \\
\ll_{n,\deg_{\underline{X}}(G)} & \frac{q^{(n+1)N}}{q^{(n+1)L}}|S_N|^{n+1}\max_{\substack{\underline{\beta} \in k_\pi^{n+1}\\\underline{\beta}\not = 0}} \left|\sum_{\substack{\underline{\alpha}\in k_\pi^{n+1}\\G(\underline{\alpha})=0 \, (\mod \pi) }} \psi_\infty \left(\frac{\underline{\beta}\cdot \underline{\alpha}}{\pi}\right)\right|+q^{(n+1)N-L}.
\end{align*} 
In the last term, corresponding to $\underline{\be}=0$,
we have applied the Lang-Weil bound \cite[Thm. 1]{LW54} to count 
\[\{ \underline{\alpha} \in k_\pi^{n+1} : G(\underline{\alpha})=0 (\mod \pi)\}\ll_{n,\deg_{\underline{X}}(G)} q^{nL}.\]  
We now consider the additive character sum above.  Given $\underline{\be} \neq 0$,  we start by writing
\begin{equation}\label{psisums}
\sum_{\substack{\underline{\alpha}\in k_\pi^{n+1}\\G(\underline{\alpha})=0 \, (\mod \pi) }} \psi_\infty \left(\frac{\underline{\beta}\cdot \underline{\alpha}}{\pi}\right)=  \sum_{\gamma \in k_\pi} \psi_\infty \left(\frac{\gamma}{\pi}\right) \sum_{\substack{\underline{\alpha}\in k_\pi^{n+1}\\G(\underline{\alpha})=0 \, (\mod \pi)\\\underline{\alpha}\cdot \underline{\beta}=\gamma\, (\mod \pi)}} 1.
\end{equation}
Consider the sum over $\underline{\alpha}$. This sum is independent of $\gamma$ for $\gamma \in k_\pi^*$. Indeed, for $\gamma, \gamma_0\in k_\pi^*$, write $\gamma_0=\delta \gamma$. Setting $d=\deg_{\underline{X}} G$, and using the homogeneity of $G(\underline{X})$,  this gives
\begin{align*}
\sum_{\substack{\underline{\alpha}\in k_\pi^{n+1}\\G(\underline{\alpha})=0 \, (\mod \pi)\\\underline{\alpha}\cdot \underline{\beta}=\gamma\, (\mod \pi)}} 1=& \sum_{\substack{\underline{\alpha}\in k_\pi^{n+1}\\\delta^d G(\underline{\alpha})=0 \, (\mod \pi)\\\underline{\alpha}\cdot \underline{\beta}=\gamma\, (\mod \pi)}} 1 =  \sum_{\substack{\underline{\alpha}\in k_\pi^{n+1}\\G(\delta \underline{\alpha})=0 \, (\mod \pi)\\\underline{\alpha}\cdot \underline{\beta}=\gamma\, (\mod \pi)}} 1 \\
= &\sum_{\substack{\underline{\alpha_0}\in k_\pi^{n+1}\\\ G(\underline{\alpha_0})=0 \, (\mod \pi)\\\underline{\alpha_0}\cdot \underline{\beta}=\delta \gamma\, (\mod \pi)}} 1=\sum_{\substack{\underline{\alpha_0}\in k_\pi^{n+1}\\\ G(\underline{\alpha_0})=0 \, (\mod \pi)\\\underline{\alpha_0}\cdot \underline{\beta}=\gamma_0\, (\mod \pi)}} 1.
\end{align*}
Combining the above observation with \eqref{psisums}, we obtain 
\begin{align*}
\sum_{\substack{\underline{\alpha}\in k_\pi^{n+1}\\G(\underline{\alpha})=0 \, (\mod \pi) }} \psi_\infty \left(\frac{\underline{\beta}\cdot \underline{\alpha}}{\pi}\right)=& -  \sum_{\substack{\underline{\alpha}\in k_\pi^{n+1}\\G(\underline{\alpha})=0 \, (\mod \pi)\\\underline{\alpha}\cdot \underline{\beta}=1\, (\mod \pi)}}1 +  \sum_{\substack{\underline{\alpha}\in k_\pi^{n+1}\\G(\underline{\alpha})=0 \, (\mod \pi)\\\underline{\alpha}\cdot \underline{\beta}=0\, (\mod \pi)}} 1.   
\end{align*}
We conclude by applying the next lemma.

\begin{lemma}\label{lemma_count_G}
Let $G\in \mathcal{O}_K[X_0,\dots,X_n]$ be an irreducible homogeneous polynomial of degree $\deg_{\underline{X}} G\geq 2$, and  let $\pi$ be an irreducible polynomial of degree $L$.   Fix $\underline{\beta} \in k_\pi^{n+1}$, $\underline{\beta}\not = (0,\dots,0)$. Then 
\[\sum_{\substack{\underline{\alpha}\in k_\pi^{n+1}\\G(\underline{\alpha})=0\,(\mod \pi)\\ \underline{\alpha}\cdot \underline{\beta}=1\, (\mod \pi)}}1 \ll_{n,\deg_{\underline{X}}(G)} q^{(n-1)L}\]
and 
\[\sum_{\substack{\underline{\alpha}\in k_\pi^{n+1}\\G(\underline{\alpha})=0\, (\mod \pi)\\ \underline{\alpha}\cdot \underline{\beta}=0\,(\mod \pi)}}1 \ll_{n,\deg_{\underline{X}}(G)} q^{(n-1)L}.\]

\end{lemma}
\begin{proof}[Proof of Lemma \ref{lemma_count_G}]
  We consider an embedding of $k_\pi^{n+1}$ in $\PP_{k_\pi}^{n+1}$ by adding an extra coordinate $T$ and interpreting $k_\pi^{n+1}$ as the subset where $T=1$.
\begin{align*}
 \sum_{\substack{\underline{\alpha}\in k_\pi^{n+1}\\G(\underline{\alpha})=0 \mod \pi\\ \underline{\alpha}\cdot \underline{\beta}=1\mod \pi}}1=\sum_{\substack{(\underline{\alpha},T)\in \PP^{n+1}\\G(\underline{\alpha})=0 \mod \pi\\ \underline{\alpha}\cdot \underline{\beta}=T\mod \pi\\T\not = 0}}1 \leq \sum_{\substack{(\underline{\alpha},T)\in \PP^{n+1}\\G(\underline{\alpha})=0 \mod \pi\\ \underline{\alpha}\cdot \underline{\beta}=T\mod \pi}}1 \ll_{n,\deg_{\underline{X}}(G)} q^{(n-1)L},
\end{align*}
where in the last identity we applied  the fact that $G$ is irreducible and has degree at least 2, so that we can apply the result of Lang-Weil  for a variety of codimension 2 \cite[Thm 1]{LW54}. 

For the second inequality, write for $\underline{\alpha} \in   k_\pi^{n+1}$, $\underline{\alpha}= a \underline{\gamma}$ with $a \in k_\pi$ and $\underline{\gamma} \in \PP_{k_\pi}^{n}$. This gives
\begin{align*}
 \sum_{\substack{\underline{\alpha}\in k_\pi^{n+1}\\G(\underline{\alpha})=0 \mod \pi\\ \underline{\alpha}\cdot \underline{\beta}=0\mod \pi}}1=1+\sum_{\substack{a\not = 0 \mod{k_\pi}}} \sum_{\substack{\underline{\gamma} \in \PP^{n}\\G(\underline{\gamma})=0 \mod \pi\\ \underline{\gamma}\cdot \underline{\beta}=0\mod \pi } } 1 \ll_{n,\deg_{\underline{X}}(G)} 1+(q^L-1) q^{(n-2)L}
 \end{align*}
by a second application of Lang-Weil for a variety of codimension 2, and we conclude. 
\end{proof}
This concludes the proof of Lemma \ref{gratefultothereferee}, and the proof of Proposition \ref{prop_unram} is complete.

\section{Completing the proof of Theorem \ref{main-application}: choice of parameters}\label{sec_choices}

It is time to wrap up the proof of Theorem \ref{main-application} (and Theorem \ref{FcnFSerreConj}).
Putting together all our estimates for the main sieve term, the ramified sieve term, and the unramified sieve term, we obtain that, 
for fixed $q, n, \ell, F$ as in the statement of the aforementioned theorems, 
for any sufficiently large positive integer $b$
and for any positive integer $\Del$ chosen such that (\ref{card-P}) holds and
$$
\Del< b< 2\Del,
$$
we have
  \beq\label{general-sieve-cyclic-inequality-application-Delta}
\left|{\mathcal{S}}_F({\mathcal{A}})\right|
\ll_{\ell, m,n,  \deg_{T}(F),\deg_{T}(F^*)} 
 b  q^{(n+1) b - \Delta}  
 + q^{nb}+ b^2 q^{n\Del}+ q^{(n+1)\Delta} +q^{(n+1)b-\Delta}
 \ll  b q^{(n+1) b - \Delta} 
 + q^{(n+1)\Delta},
\eeq 
where in the last inequality we applied the fact that $\Del<b$ so that $nb< (n+1)b-\Del$   and we imposed the additional assumption
\begin{equation} \label{b-square-bounded-Delta}
b^2\leq q^\Delta,  
\end{equation}
which we will verify momentarily.
Our remaining  goal is to choose $\Del$ optimally such that the resulting upper bound for 
$\left|{\mathcal{S}}_F({\mathcal{A}})\right|$
improves upon the trivial bound $q^{(n+1)b}$. In what follows,
we address the choice of $\Del$, after which we address the existence of a minimal  $b(n, q, F)$  such that
our previous working assumptions on $\Del$
 hold for all $b \geq b(n, q, F)$ (namely $\Del<b<2\Del$, and the  assumptions (\ref{card-P}), (\ref{L_lower_Fstar}), (\ref{b-square-bounded-Delta})).

\subsection{Choice of $\Del = \Del(n, b)$}

Looking at the right-most side of
\eqref{general-sieve-cyclic-inequality-application-Delta}, we see that an initial choice $\Del_0$ 
of $\Del$
could be made such that the two terms are balanced, that is,  
$$
bq^{(n+1) b - \Del_0} = q^{(n+1) \Del_0},
$$
which is equivalent to choosing
\begin{equation}\label{choice-Delta-zero}
\Del_0 := \frac{(n+1) b+\log_q b}{n+2}.
\end{equation}
Since the above $\Del_0$ is not necessarily an integer, while $\Del$, as the degree of a polynomial, 
must be an integer, we write $\Del_0$ in terms of its integral and fractional parts, and choose $\Del$ to be the former:
$$
\Del_0
=
\left\lfloor\frac{(n+1)b+\log_q b}{n+2}\right\rfloor
+
\left\{\frac{(n+1)b+\log_q b}{n+2}\right\}
=
\Del + \delta_0,
$$
where, we highlight,
\begin{equation}\label{final-choice-Delta}
\Del = \Del(n, b) := \left\lfloor\frac{(n+1)b+\log_q b}{n+2}\right\rfloor
\in \N \quad \text{(including zero)}
\end{equation}
and
$$
\delta_0 := \left\{\frac{(n+1)b+\log_q b}{n+2}\right\} \in [0, 1).
$$
With this choice of $\Del$, we obtain:
\begin{eqnarray*}
\left|{\mathcal{S}}_F({\mathcal{A}})\right|
&\ll_{\ell,m, n,  \deg_{T}(F), \deg_{T}(F^*)}&  b q^{(n+1) b - \Delta} 
 + q^{(n+1)\Delta}
 \\
&&
\hspace*{-4cm}
\leq 
2b q^{(n+1) b - \Delta}  
\quad \text{(since $(n+1) \Del =(n+2)\Del-\Del \leq (n+2) \Del_0 -\Del= (n+1) b - \Del+\log_q b)$}
\\
&&
\hspace*{-4cm}
= 2 b  \left(q^b\right)^{(n+1) - \frac{\Del}{b}}
 \\
 &&
 \hspace*{-4cm}
 <
2b\left(q^b\right)^{(n+1) - \frac{n+1}{n+2}-\frac{\log_q b}{b(n+2)} + \frac{1}{b}},
\end{eqnarray*}
since 
\[- \frac{\Del}{b} 
 = 
- \frac{n+1}{n+2}-\frac{\log_q b}{b(n+2)} + \frac{\delta_0}{b}
 <
 - \frac{n+1}{n+2} -\frac{\log_q b}{b(n+2)}  + \frac{1}{b}.
 \]
 We recognize the bound above as $\ll b^{1-\frac{1}{n+2}}  \left(q^b\right)^{(n+1) - \frac{n+1}{n+2}} q$. 
Recalling that $q$ is fixed, we conclude that
\beq\label{sieve_conclusion}
\left|{\mathcal{S}}_F({\mathcal{A}})\right|
\ll_{\ell,m, n, q, \deg_T(F),\deg_{T}(F^*)} 
b^{\frac{n+1}{n+2}}   (q^b)^{(n+1) - \frac{n+1}{n+2}},
\eeq
under the assumption that  
$b \geq b (n, q, F)$ is such that the inequalities
$\Del < b < 2\Del$ and (\ref{card-P}), (\ref{L_lower_Fstar}), and (\ref{b-square-bounded-Delta}) hold for $\Del=\Del(n,b)$.

\subsection{Choice of $b(n, q, F)$}\label{sec_choice_b}

First, let us note that, as a first constraint,
$b$ must be chosen sufficiently large to ensure that $\Delta(n, b) \neq 0$; it suffices to have
$b \geq b_1$ for some $b_1 = b_1(n)$ chosen such that
\begin{equation}\label{b-constraint-1}
\left\lfloor\frac{(n+1) b_1+\log_q b_1}{n+2}\right\rfloor  \geq \left\lfloor\frac{(n+1) b_1}{n+2}\right\rfloor \neq 0.
\end{equation}
Next, 
recall that, in (\ref{card-P}) of 
\S \ref{sec_app_part1}, we introduced the assumption
\begin{equation}\label{card-P-bis}
\frac{q^\Delta}{\Delta}  
- 
\left(\frac{q^{\frac{\Delta}{2}}}{\Delta} 
+ 
q^{\frac{\Delta}{3}}\right) 
- 
|\mathcal{P}_{\text{exc}}(F)| 
\geq  
\frac{q^{\Delta}}{2 \Delta},
\end{equation}
which gives rise to a second set of constraints on $b = b(n, q, F)$.
Observe that, if we choose $b$ such that
\begin{equation}\label{b-constraint-2.1}
|\mathcal{P}_{\text{exc}}(F)| 
\leq  
\frac{q^{\Del}}{4 \Del}
\end{equation}
and
\begin{equation}\label{b-constraint-2.2}
\frac{q^{\frac{\Delta}{2}}}{\Delta} 
+ 
q^{\frac{\Delta}{3}}
\leq
\frac{q^{\Del}}{4 \Del},
\end{equation}
then
$$
\frac{q^\Delta}{\Delta}  
- 
\left(\frac{q^{\frac{\Delta}{2}}}{\Delta} 
+ 
q^{\frac{\Delta}{3}}\right) 
- 
|\mathcal{P}_{\text{exc}}(F)| 
\geq
\frac{q^\Delta}{\Delta}  
- 
\frac{q^{\Del}}{4 \Del}
-
\frac{q^{\Del}}{4 \Del}
=  
\frac{q^{\Delta}}{2 \Delta},
$$
which ensures \eqref{card-P-bis}.
Since $\mathcal{P}_{\text{exc}}(F)$ is a finite set
and 
its cardinality depends on $q$ and $F$, we can find $b_{2, 1} = b_{2, 1}(q, F)$ such that, for any $b \geq b_{2, 1}$,
\eqref{b-constraint-2.1} holds for $\Del = \Del(n, b)$ as chosen in \eqref{final-choice-Delta}.
Observe that
$$
q^{\frac{\Del}{2}}
+
\Del q^{\frac{\Del}{3}}
\leq
q^{\frac{\Del}{2}}
+
\Del q^{\frac{\Del}{2}}
<
q^{\frac{\Del}{2}}
+
b q^{\frac{\Del}{2}}
=
(b + 1) q^{\frac{\Del}{2}},
$$
where we used that $\Del < b$.
Thus, to ensure \eqref{b-constraint-2.2}, 
it suffices to choose $b_{2, 2} = b_{2, 2}(n, q)$ such that, for any $b \geq b_{2, 2}$, we have
$$
4(b + 1) \leq q^{\frac{1}{2}\left\lfloor\frac{(n+1) b}{n+2}\right\rfloor}.
$$
In addition, remark that the above condition ensures \eqref{b-square-bounded-Delta}. 

Next we have to ensure (\ref{L_lower_Fstar}) holds, namely $\Delta \geq L_0(F^*).$ This similarly will hold as long as $b \geq b_3$ for some $b_3(F)$ sufficiently large.
 
Now note that the inequalities
$\Del < b < 2 \Del$
give rise to a final set of constraints for $b$. With our final choice \eqref{final-choice-Delta} of $\Del$, these inequalities become
\begin{equation}\label{constraint-3}
\left\lfloor \frac{(n+1)b+\log_q b}{n+2} \right\rfloor <b< 2\left\lfloor \frac{(n+1)b+\log_q b}{n+2} \right\rfloor.    
\end{equation}
We claim that, for any  $n\geq 1$, the inequalities \eqref{constraint-3} hold
for any $b \geq b_4$ for some $b_4 = b_4(n)$. In what follows, we verify this claim.

 Let $b \geq 3(n+2)$.
 Dividing $b$ with quotient and remainder by $n+2$,
 we find uniquely determined non-negative integers 
 $b_0, r_0$ such that
 \begin{equation}\label{QRT-b-one}
b = (n+2) b_0 + r_0 
 \end{equation}
 and
  \begin{equation}\label{QRT-b-two}
0 \leq r_0 \leq n+1.
 \end{equation}
Note that
$$
b_0 \geq 3.
$$
Rewriting $b$ using \eqref{QRT-b-one}, we obtain that \eqref{constraint-3} is equivalent to

\begin{equation}\label{constraint-3-second}
(n+1) b_0
+
\left\lfloor \frac{(n+1) r_0+ \log_q b}{n+2} \right\rfloor 
<
(n+1) b_0 + b_0 + r_0
< 
2 (n+1) b_0
+
2\left\lfloor \frac{(n+1)r_0+\log_q b}{n+2} \right\rfloor.   
\end{equation}

The  left-hand side inequality in \eqref{constraint-3-second}
is equivalent to
$$
\left\lfloor \frac{(n+1) r_0+\log_q b}{n+2} \right\rfloor 
<
b_0 + r_0;
$$
for this it suffices that 
 \[\frac{(n+1) r_0+\log_q b}{n+2}  
<
b_0 + r_0 = \frac{b-r_0}{n+2} + r_0 = \frac{b+(n+1)r_0}{n+2},
\]
which certainly holds.
 
The  right-hand side inequality in \eqref{constraint-3-second}
is equivalent to
$$
b_0 + r_0
< 
(n+1) b_0
+
2\left\lfloor \frac{(n+1)r_0+ \log_q b}{n+2} \right\rfloor;
$$ 
this will certainly hold if
$$
r_0
<
nb_0
+
2 \left\lfloor   \frac{(n+1)r_0 }{n+2}  \right\rfloor.
$$
Since $b_0 \geq 3$ and $0 \leq r_0 \leq n+1$, the above inequality will hold if
\[ n+1 < 3n ,\]
which holds for any $n \geq 1$.

In conclusion, provided $n \geq 1$,
there exists a positive integer
$b(n, q, F)$ such that, 
for any $b > b(n, q, F)$,
the inequalities
$\Del< b < 2\Del$, (\ref{card-P}),  (\ref{L_lower_Fstar}), and (\ref{b-square-bounded-Delta}) hold for $\Del = \Del(n,b)$, so that the sieve process has proved (\ref{sieve_conclusion}). 
On the other hand, for each $b \leq b(n,q,F)$, we can apply the trivial bound 
\[ \mathcal{S}_F(\mathcal{A}) \leq (q^b)^{n+1} \leq q^{(n+1)b(n,q,F)} \ll_{n,q,F} 1. \]
Thus by enlarging the implicit constant in (\ref{sieve_conclusion}) if necessary, it holds for all $b$.
This completes the proof of 
Theorem \ref{FcnFSerreConj} 
(and of Theorem \ref{main-application}).

 \section{Counting bound}\label{sec_counting}
For completeness, we record below a simple counting lemma, which can be considered a ``trivial bound'' (sometimes also called the Schwartz-Zippel bound); we applied this in \S \ref{sec_ramified}.

\begin{lemma}\label{lemma_Schwartz_Zippel}
Let 
$A$ be a domain, 
$n \geq 1$  an integer, 
and 
 $G \in A[X_0, \ldots, X_n]$  
 a homogeneous polynomial of degree $e \geq 1$ in $X_0, \ldots, X_n$.
Then, for any finite subset  $S \subseteq A$,
we have
\beq\label{counting-points-S}
\#\left\{ (\gamma_0,\ldots, \gamma_n) \in S^{n+1}: G(\gamma_0,\ldots, \gamma_n) = 0\right\} \leq e |S|^n.
\eeq
\end{lemma}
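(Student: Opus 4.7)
\smallskip

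\noindent\textbf{Proof proposal for Lemma \ref{lemma_Schwartz_Zippel}.}
The plan is to prove the estimate by induction on $n$, following the standard Schwartz--Zippel-type argument. Homogeneity will play no essential role; the bound in fact holds for any nonzero polynomial of total degree at most $e$ over the domain $A$, and the hypothesis $e\ge 1$ only serves to rule out the zero polynomial (which is excluded by the assumption that $G$ has a degree).

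For the base case $n=0$, a homogeneous polynomial $G\in A[X_0]$ of degree $e\ge 1$ has the form $cX_0^e$ with $c\in A\setminus\{0\}$, since $A$ is a domain. Its only zero in $A$ is $0$, so there is at most one element of $S$ at which $G$ vanishes, and $1\le e=e|S|^0$.

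For the inductive step, assume the bound holds for all nonzero polynomials in $n$ variables of total degree at most $e$. Given $G$ as in the statement, write
\[
G(X_0,\ldots,X_n)=\sum_{i=0}^{d} X_n^{i}\,G_i(X_0,\ldots,X_{n-1}),
\]
where $d\le e$ is the largest index for which $G_d\neq 0$ (this index exists because $G\neq 0$), and each $G_i\in A[X_0,\ldots,X_{n-1}]$ has total degree at most $e-i$. I will split the count according to whether $G_d$ vanishes on the first $n$ coordinates. If $G_d(\gamma_0,\ldots,\gamma_{n-1})\neq 0$, then $G(\gamma_0,\ldots,\gamma_{n-1},X_n)$ is a nonzero polynomial in $X_n$ of degree $d$ over the domain $A$, hence has at most $d$ roots in $S$; such tuples contribute at most $d|S|^n$ zeros. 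If $G_d(\gamma_0,\ldots,\gamma_{n-1})=0$, then the last coordinate is unrestricted, contributing at most $|S|$ zeros per such tuple $(\gamma_0,\ldots,\gamma_{n-1})\in S^n$; by the inductive hypothesis applied to $G_d$ (which is nonzero of total degree at most $e-d$; if $e-d=0$ then $G_d\in A\setminus\{0\}$ has no zeros at all), the number of such tuples is at most $(e-d)|S|^{n-1}$, contributing at most $(e-d)|S|^n$.

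Adding the two contributions yields at most $d|S|^n+(e-d)|S|^n=e|S|^n$, completing the induction and the proof. There is no real obstacle here; the only point requiring mild care is the edge case $e-d=0$ in the inductive step, which is handled by observing that a nonzero constant polynomial has no zeros, so that half of the dichotomy contributes nothing.
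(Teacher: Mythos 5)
Your proof is correct and follows essentially the same approach as the paper's: induction on the number of variables, writing $G$ as a polynomial in one distinguished variable with coefficients in the others, and splitting the count according to whether the top coefficient vanishes, so that the two contributions $d\,|S|^n$ and $(e-d)|S|^n$ add to $e|S|^n$. The small differences --- peeling off $X_n$ rather than $X_0$, starting the base case at $n=0$ rather than $n=1$, and noting that homogeneity is inessential --- are cosmetic and do not change the structure of the argument.
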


We recall the   standard proof, which proceeds by induction on $n$ (e.g., see  \cite[Thm. 1]{HB02} for a version of this result when $A = \Z$).

\begin{proof}
In what follows, $L$ is the field of fractions of $A$ and $\overline{L}$ is a fixed algebraic closure of $L$.
In our argument below, it suffices that $G(X_0, \ldots, X_n) \in \overline{L}[X_0, \ldots, X_n]$,
that is, we do not need to assume that the coefficients of $G$ are in $A$.

Since $G$ is homogenous of degree $e \geq 1$, for each $0 \leq i \leq e$ there exists a homogenous polynomial
$G_i \in \overline{L}[X_1, \ldots, X_n]$, of degree $e - i$, such that
\begin{equation*}
G(X_0, \ldots, X_n) = \ds\sum_{0 \leq i \leq e} X_0^i \ G_i(X_1, \ldots, X_n).
\end{equation*}
Take $i_0$ to be the maximal index $0 \leq i \leq e$ such that 
$G_i$ is not identically zero.
Thus,
\begin{equation*}
G(X_0, \ldots, X_n) = \ds\sum_{0 \leq i \leq i_0} X_0^i \ G_i(X_1, \ldots, X_n).
\end{equation*}
Our goal is to show that the number of solutions in
$S^{n+1}$ to the equation
\begin{equation}\label{G-eq}
\ds\sum_{0 \leq i \leq i_0} x_0^i \ G_i(x_1, \ldots, x_n)  = 0
\end{equation}
is at most $e |S|^n$. We prove this statement by induction on $n$.

When $n = 1$, \eqref{G-eq} becomes  the equation
\begin{equation}\label{G-eq=1}
\ds\sum_{0 \leq i \leq i_0} x_0^i \ G_i(x_1)  = 0,
\end{equation}
whose degree is   $i_0$.
Choosing $\gamma_1 \in S$ to be a root of the polynomial $G_{i_0}(X_1)$, provided such a root exists, 
we note that \eqref{G-eq=1} may be satisfied 
by the pair $(x_0, x_1) = (\gamma_0, \gamma_1)$ for any $\gamma_0 \in S$.
Since $G_{i_0}(X_1)$ has at most $\deg_{X_1}(G_1) = e-i_0$ roots in $\overline{L}$, there are at most $e-i_0$ choices for $\gamma_1$.
There are at most $|S|$ choices for $\gamma_0$.
As such, in this case, there are at most $(e - i_0) |S|$  possible solutions $(\gamma_0, \gamma_1) \in S^2$ of \eqref{G-eq=1}.
Choosing $\gamma_1 \in S$ to not be a root of the polynomial $G_{i_0}(X_1)$, provided such $\gamma_1$ exists,  we see that 
 \eqref{G-eq=1} is a degree $i_0$ equation 
with unknown $x_0$.
Viewed over $\overline{L}$, this equation has  $i_0$ solutions $x_0 = \gamma_0$. In total,  in this case, there are 
at most $|S|  i_0$ solutions $(\gamma_0, \gamma_1) \in S^2$ of \eqref{G-eq=1}.
Altogether, we obtain that  \eqref{G-eq=1} has at most 
$ (e - i_0) |S| + i_0 |S| = e |S|$ solutions in $S^2$.

When $n \geq 2$, we make the  inductive hypothesis that
\beq\label{counting-points-S-n-1}
\#\left\{ (\gamma_1,\ldots, \gamma_n) \in S^{n}: G'(\gamma_1,\ldots, \gamma_n) = 0\right\} 
\leq 
\deg_{\underline{X}}(G') |S|^{n-1}
\eeq
for any 
homogenous polynomial $G'(X_1, \ldots, X_n) \in \overline{L}[X_1, \ldots, X_n]$.
In particular, we assume that \eqref{counting-points-S-n-1} holds for 
$G_{i_0}(X_1, \ldots, X_n)$. 
Choosing $(\gamma_1, \ldots, \gamma_n) \in S^n$ to be a root of the polynomial $G_{i_0}(X_1, \ldots, X_n)$, provided it exists, we note that \eqref{G-eq} might be satisfied by  $(x_0, x_1, \ldots, x_n) = (\gamma_0, \gamma_1, \ldots, \gamma_n)$ for any $\gamma_0 \in S$.
Since $\deg_{\underline{X}}(G_{i_0}) = e - i_0$,
by the induction hypothesis we know that 
there are at most $(e-i_0) |S|^{n-1}$  roots $(\gamma_1, \ldots, \gamma_n) \in S^n$.
As such, in this case, there are at most $(e - i_0) |S|^n$  solutions 
$(\gamma_0, \gamma_1, \ldots, \gamma_n) \in S^{n+1}$ of \eqref{G-eq}.
Choosing $(\gamma_1, \ldots, \gamma_n) \in S^n$ to not be a root of the polynomial $G_{i_0}(X_1, \ldots, X_n)$,
provided it exists,  we see that 
 \eqref{G-eq} gives rise to the degree $i_0$ equation 
\begin{equation*}\label{G-eq=1-x_0}
\ds\sum_{0 \leq i \leq i_0} x_0^i \ G_i(\gamma_1, \ldots, \gamma_n)  = 0,
\end{equation*}
with unknown $x_0$.
Viewed over $\overline{L}$, this equation has at most $i_0$ solutions $x_0 = \gamma_0$. In total,  in this case, there are at most 
$i_0 |S|^n$ solutions $(\gamma_0, \gamma_1, \ldots, \gamma_n) \in S^{n+1}$ of \eqref{G-eq}.
Altogether, we obtain that  \eqref{G-eq} has at most 
$(e - i_0)|S|^n + i_0 |S|^n =  e |S|^n$ solutions in $S^{n+1}$.
\end{proof}


\subsection*{Funding}
This work was initiated at the Women In Numbers 3rd workshop organized 
at  the Banff International Research Station (Alberta, Canada)
in Spring 2014. We thank the workshop organizers,  Ling Long, Rachel Pries, and Katherine Stange,
for providing us with a focused and stimulating research environment for starting this work.
We thank the Department of Mathematics, Statistics, and Computer Science at the University of Illinois at Chicago (USA) for sponsoring and hosting a visit of the authors to continue this work.
AB has been partially supported by NSF grant DMS-2002716, Simons Foundation collaboration grants No. 244988 and No. 524015, and by Institute for Advanced Study, which includes funding from NSF grant DMS-1638352.
ACC has been partially supported  by the Simons Collaboration Grants No. 318454 and No. 709008.
ML has been partially supported by the Natural Sciences and Engineering Research Council
of Canada under Discovery Grant 355412-2013 and the Fonds de recherche du Qu\'ebec - Nature et technologies, Projet de recherche en \'equipe 256442 and 300951.
LBP thanks the Hausdorff Center for Mathematics for a productive research environment, and has been partially supported by NSF CAREER grant DMS-1652173, a Sloan Research Fellowship,   by the Charles Simonyi Endowment and NSF Grant No. 1128155 at the IAS, and by a Joan and Joseph Birman Fellowship.


\section{Appendix: Projective Duality in Fibers for Smooth Hypersurfaces\\by Joseph Rabinoff} \label{sec:appendix}
In this appendix we gather some results from the literature on projective duality for smooth hypersurfaces.  In the case of global fields, we can ``spread out from the generic fiber'' to conclude that the same results hold at all but finitely many places.  We will use the language of algebraic varieties to the extent possible, and our proofs will use only elementary facts from commutative algebra, but  as we will be working with non-algebraically closed fields and coefficient rings like $\Z$, some basic scheme theory will be required to make certain constructions precise.

Let $k$ be a field, let $\bar k$ be an algebraic closure, and let $\PP^n_k$ be the $n$-dimensional projective space over $k$, with homogeneous coordinate ring $k[X_0,\ldots,X_n]$.  A \textit{hypersurface} in $\PP^n_k$ is the zero set of a nonzero homogeneous polynomial $H\in k[X_0,\ldots,X_n]$.  Equivalently, a hypersurface is a closed subvariety (or subscheme) of pure dimension $n-1$ (see the proof of~\cite[Proposition~II.6.4]{Ha77}).  A hypersurface $W$ defined by $H\in k[X_0,\ldots,X_n]$ is \textit{smooth} if the homogeneous polynomials $H,\partial H/\partial X_0,\ldots,\partial H/\partial X_n$ have no common zeros in $\PP^n_{\bar k}$.  The hypersurface $W$ is \textit{integral} if $H$ is an irreducible polynomial over $k$, and it is \textit{geometrically integral} if $H$ is irreducible over $\bar k$.

\begin{lemma}\label{smooth-geom-irred}
  Let $n\geq 2$ and let $W\subset\PP^n_k$ be a smooth hypersurface.  Then $W$ is geometrically integral.
\end{lemma}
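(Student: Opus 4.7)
The plan is to reformulate geometric integrality as an irreducibility statement for $H$. The scheme $W = \mathrm{Proj}(k[X_0,\ldots,X_n]/(H))$ is geometrically integral if and only if $\bar k[X_0,\ldots,X_n]/(H)$ is a domain; since $\bar k[X_0,\ldots,X_n]$ is a UFD, this is equivalent to $H$ being irreducible (up to unit) in $\bar k[X_0,\ldots,X_n]$. So the whole lemma reduces to showing that, under the smoothness hypothesis, $H$ admits no non-trivial homogeneous factorization over $\bar k$.

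Suppose for contradiction that $H = H_1 H_2$ with $H_1, H_2 \in \bar k[X_0,\ldots,X_n]$ homogeneous of positive degree (the case $H_1 = H_2$ is allowed, corresponding to a repeated factor). The zero sets $V(H_1), V(H_2) \subset \mathbb{P}^n_{\bar k}$ are each hypersurfaces of dimension $n-1$. Since $n \geq 2$, the projective dimension theorem (equivalently, iterated application of Krull's Hauptidealsatz in the graded ring $\bar k[X_0,\ldots,X_n]$) ensures that $V(H_1) \cap V(H_2)$ is nonempty of dimension at least $n-2 \geq 0$. Pick any $\bar k$-point $P$ in this intersection.

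At such a point $P$ we have $H_1(P) = H_2(P) = 0$, so $H(P) = 0$, and the Leibniz rule yields
\[
\frac{\partial H}{\partial X_i}(P) = H_2(P)\,\frac{\partial H_1}{\partial X_i}(P) + H_1(P)\,\frac{\partial H_2}{\partial X_i}(P) = 0 \qquad \forall\, 0 \leq i \leq n.
\]
Thus $P$ is a common zero of $H, \partial H/\partial X_0, \ldots, \partial H/\partial X_n$ in $\mathbb{P}^n_{\bar k}$, contradicting the smoothness of $W$. Hence no such factorization exists, and $W$ is geometrically integral.

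The only non-trivial ingredient is the non-emptiness of $V(H_1) \cap V(H_2)$, which is a standard consequence of the projective dimension theorem and is where the hypothesis $n \geq 2$ enters essentially (in $\mathbb{P}^1$, a smooth hypersurface is a finite set of reduced points and need not be geometrically integral). The rest is a direct calculation with the product rule, so I do not anticipate any genuine obstacle.
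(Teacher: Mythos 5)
Your proof is correct and follows essentially the same route as the paper's: both proofs hinge on the projective dimension theorem (Hartshorne, Thm.\ I.7.2) to guarantee that two positive-degree factors of $H$ have a common zero in $\PP^n_{\bar k}$ when $n\geq 2$, and both conclude that such a point is singular, contradicting smoothness. The only difference is cosmetic — you make the singularity explicit via the Leibniz rule and phrase everything in terms of a factorization of $H$, whereas the paper speaks of irreducible components of $W_{\bar k}$ and cites Hartshorne for the fact that a nonsingular projective variety is cut out by an irreducible polynomial.
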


\begin{proof}
  Suppose that $W$ is defined by $H\in k[X_0,\ldots,X_n]$.  Then the hypersurface $W_{\bar k}\subset\PP^n_{\bar k}$ defined by $H\in\bar k[X_0,\ldots,X_n]$ is again smooth, since $H,\partial H/\partial X_0,\ldots,\partial H/\partial X_n$ have no common zeros \emph{in $\bar k$}.  This implies that $W_{\bar{k}}$ is irreducible, since any two irreducible components would intersect in a singular point~\cite[Theorem~I.7.2]{Ha77}.  This means that $W_{\bar{k}}$ is a nonsingular variety~\cite[Example~10.0.3]{Ha77}, which is necessarily defined by an irreducible polynomial.
\end{proof}

In the following we assume $n\geq2$.  Let $W\subset\PP^n_k$ be the smooth hypersurface defined by $H\in k[X_0,\ldots,X_n]$.  The tangent space to $W$ at a rational point $P\in W(k)$ is the hyperplane $T_P(W)$ defined by
\[ \frac{\partial H}{\partial X_0}(P)\,X_0 + \cdots + \frac{\partial H}{\partial X_n}(P)\,X_n = 0. \]
This is in fact a hyperplane, as $H(P) = 0$ implies $\partial H/\partial X_i(P)\neq 0$ for some $i$.  This construction can be improved in the following way.  Let $\PP^{n*}_k$ be the \textit{dual projective space} parameterizing \emph{hyperplanes} in affine $(n+1)$-space.  Concretely, we have $\PP^{n*}_k\cong\PP^n_k$, with a point $[a_0:\cdots:a_n]\in\PP^n_k$ corresponding to the hyperplane $a_0X_0+\cdots+a_nX_n=0$.  The map $W(k)\to\PP^{n*}_k$ defined by $P\mapsto T_P(W)$ can be promoted to a regular map $\cG_W\colon W\to\PP^{n*}_k$; using the identification $\PP^{n*}_k\cong\PP^n_k$, it is given by the homogeneous polynomials $[\partial H/\partial X_0:\ldots:\partial H/\partial X_n]$.  We call $\cG_W$ the \textit{Gauss map}; its image $W^*$ is the \textit{dual variety of $W$}.  Algebraically, the Gauss map is defined by the $k$-algebra homomorphism $g_W\colon k[Y_0,\ldots,Y_n]\to k[X_0,\ldots,X_n]/(H)$ sending $Y_i$ to $\partial H/\partial X_i$, and $W^*$ is defined by $\ker(g_W)$.

More generally, if $W$ is singular then $W^*$ is defined to be the closure of the image of the nonsingular locus under the Gauss map.

The following proposition summarizes the main facts about projective duality for smooth hypersurfaces in arbitrary characteristic.  All results are extracted from~\cite{Kle86}, which is an excellent reference.

\begin{proposition}\label{dual-of-hypersurface}
  Let $n\geq 2$, let $W\subset\PP^n_k$ be a smooth hypersurface defined by $H\in k[X_0,\ldots,X_n]$, and let $W^*\subset\PP^{n*}_k$ be the dual variety.  Suppose that $W$ is not a hyperplane, i.e.\ that $\deg(H) > 1$.
  \begin{enumerate}
  \item The dual $W^*$ is a geometrically integral hypersurface.
  \item The Gauss map $\cG_W\colon W\to W^*$ is generically finite.
  \item If $W^*$ is defined by a homogeneous polynomial $H^*\in k[X_0,\ldots,X_n]$, then
    \[ [k(W):k(W^*)]\deg(H^*) = \deg(H)(\deg(H)-1)^{n-1}, \]
    where $k(\phantom n)$ denotes the field of rational functions.
  \item If the field extension $k(W)/k(W^*)$ is separable (e.g.\ if $\chr(k)=0$), then $k(W)=k(W^*)$.
  \item (Reciprocity) If $k(W)=k(W^*)$ then $(W^*)^* = W$.
  \end{enumerate}
\end{proposition}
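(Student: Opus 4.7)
The proof strategy centers on a careful analysis of the Gauss map $\cG_W \colon W \to \PP^{n*}_k$, defined by $P \mapsto [\partial H/\partial X_0(P) : \cdots : \partial H/\partial X_n(P)]$. All five assertions will be extracted from the geometry of this map, its differential, and its image. I would prove parts (1) and (2) simultaneously: since $W^*$ is by definition the (scheme-theoretic) image of the geometrically integral variety $W$ (Lemma \ref{smooth-geom-irred}) under $\cG_W$, it is automatically geometrically irreducible and reduced. To upgrade this to "hypersurface," it suffices to show $\dim W^* = n-1$, equivalently that $\cG_W$ is generically finite. At a smooth $\bar k$-point $P$ of $W$, a direct calculation identifies the differential of $\cG_W$ along $T_P(W)$ with (a projective version of) the Hessian of $H$ restricted to $T_P(W)$. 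For $\deg H \geq 2$, one verifies that this Hessian has rank $n-1$ at a generic $P$, so $d\cG_W$ is generically of full rank and the fibers of $\cG_W$ are generically finite.

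For part (3), I would run a standard intersection-theoretic degree count on a generic line $\ell^* \subset \PP^{n*}_k$. This line corresponds to the pencil of hyperplanes containing a codimension-$2$ linear subspace $\Lambda \subset \PP^n_k$. On the one hand, $W^* \cap \ell^*$ consists of $\deg(H^*)$ points, and since $\cG_W$ is generically finite of degree $[k(W):k(W^*)]$, the preimage $\cG_W^{-1}(W^* \cap \ell^*)$ has cardinality $[k(W):k(W^*)] \cdot \deg(H^*)$ counted with multiplicity. On the other hand, $P \in \cG_W^{-1}(\ell^*)$ precisely when $T_P(W) \supset \Lambda$, which is $n-1$ conditions of the form $\ell_i(\partial H/\partial X_0(P), \ldots, \partial H/\partial X_n(P)) = 0$ for linear forms $\ell_1, \ldots, \ell_{n-1}$ cutting out $\Lambda$; each condition has degree $d-1$ in $P$, so by B\'ezout's theorem, intersection with the degree-$d$ hypersurface $W$ yields $d(d-1)^{n-1}$ points generically. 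Equating the two counts gives~(3).

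For part (4), separability of $k(W)/k(W^*)$ implies that the scheme-theoretic generic fiber of $\cG_W$ is reduced, so its length over the function field agrees with its geometric point count. Combined with the rank-$(n-1)$ computation for $d\cG_W$ at a generic smooth point, one finds the fiber is a single reduced point, hence $[k(W):k(W^*)]=1$. For part (5), biduality follows from the symmetry of the incidence correspondence $I = \{(P, H') \in W \times \PP^{n*}_k : T_P(W) \subset H'\}$: under the assumption $k(W)=k(W^*)$, the first projection $I \to W$ and the refined second projection onto $W^*$ are both birational, and applying the Gauss-map construction to $W^*$ in turn realizes $(W^*)^*$ as the image of a symmetric incidence, which coincides with $W$.

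The main obstacle will be part (2) in positive characteristic, where the Gauss map of a smooth hypersurface can fail to be birational (inseparability phenomena) and where the Hessian analysis must be carried out with enough care to rule out the possibility of a positive-dimensional generic fiber. A purely characteristic-free argument requires either a separate deformation-theoretic argument along the lines of Zak/Ein or an appeal to the reflexivity results for complete intersections; this is the place where I would lean most heavily on the reference \cite{Kle86}, which handles these subtleties uniformly.
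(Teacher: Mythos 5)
The paper's own proof of this proposition is almost entirely a citation: it gives a one-line algebraic argument that the image of an integral variety is integral (part~(1)), observes that $\dim W^*=n-1$ is equivalent to generic finiteness, and then defers (2), (3), (4), (5) wholesale to Kleiman~\cite[Prop.~II.2(iv), II.9, II.15a]{Kle86}. Your proposal instead tries to reconstruct the actual arguments, which is a genuinely different and more ambitious route, so let me compare the two and flag where your sketch breaks.

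Your treatment of (1) agrees with the paper, and your degree count for (3) via a generic pencil of hyperplanes and B\'ezout is the standard argument and is fine (modulo verifying that the B\'ezout intersection multiplicities match the scheme-theoretic fiber lengths of $\cG_W$, which you implicitly assume). The Hessian argument for (2) is the classical one, but as you yourself note it only works when the Hessian is generically non-degenerate, which can fail in positive characteristic even for smooth $W$: for example $H=X_0^2+X_1X_2$ in $\chr k=2$ has Gauss map $[x_0:x_1:x_2]\mapsto[0:x_2:x_1]$, whose differential along $W$ vanishes identically even though the map is generically finite. So the differential is the wrong thing to look at for (2) in a characteristic-free proof, and your fallback to~\cite{Kle86} (which uses the conormal variety, not the Hessian) is the right call.

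The real error is in your argument for (4). You claim that separability of $k(W)/k(W^*)$ together with a full-rank differential at a generic point forces the generic fiber to be a single reduced point. That inference is false: a generically \'etale map of degree $>1$ has full-rank differential at every preimage of a generic point, yet the fiber is still several points. Full-rank differential gives unramifiedness (reducedness of the fiber), not injectivity. The correct ingredient, which is precisely what~\cite[Prop.~II.15a]{Kle86} supplies, is the Monge--Segre--Wallace reflexivity theorem: separability of the Gauss map for a smooth $W$ implies that a general tangent hyperplane is tangent at a \emph{unique} point of $W$, whence $\cG_W$ is birational. This is a global statement about tangent hyperplanes and not a local computation with $d\cG_W$. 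Your sketch for (5) via the symmetric incidence correspondence is morally correct but again implicitly invokes reflexivity, so it inherits the same dependence on~\cite{Kle86}; the paper cites the same source. In short: your approach is more self-contained in spirit, but the step you actually need to fill for (4) is a deeper theorem than what the differential alone provides, and the paper avoids this entirely by citing Kleiman.
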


\begin{proof}
  The image of a (geometrically) integral variety under a regular map is again (geometrically) integral.  This is the geometric version of the following algebraic fact: if $f\colon A\to B$ is a homomorphism from a non-zero unitary commutative ring $A$ to an integral domain $B$, then $\ker(f)$ is prime.  If $\dim(W^*)=n-1$ then $\cG_W$ is generically finite, as it is then a dominant morphism of varieties of the same dimension.  The fact that $\dim(W^*)=n-1$, along with the degree formula in~(3), follow from~\cite[Proposition~II.2(iv) and~Proposition~II.9]{Kle86}.  Assertions~(4) and~(5) follow from~\cite[Proposition~II.15a]{Kle86}.
\end{proof}

\begin{remark}
  Proposition~\ref{dual-of-hypersurface}(4) does not assert that $k(W)$ is a purely inseparable extension of $k(W^*)$.  Indeed, there is no restriction on the separable degree of $k(W)/k(W^*)$ (provided that $k(W)/k(W^*)$ is not separable): see~\cite[\S II.3]{Kle86}.
\end{remark}

\begin{remark}
  With the notation in Proposition~\ref{dual-of-hypersurface}, suppose that $(W^*)^* = W$.  Let $d = \deg(H)$ and $d^* = \deg(H^*)$, so that $d^* = d(d-1)^{n-1}$.  If $W^*$ is also smooth then $d = d^*(d^*-1)^{n-1}$, which is true only if $d=2$.  Hence $W^*$ is not smooth if $(W^*)^*=W$ and $W$ is not a quadric.  See~\cite[Corollary~II.10]{Kle86} for more details.
\end{remark}

Now we apply the above considerations when $k$ is a \emph{global} field (of any characteristic).  If $S$ is a finite set of finite places of $k$ then we let $\sO_S$ denote the ring of $S$-integers.  A finite place $\fp$ not contained in $S$ corresponds to a maximal ideal of $\sO_S$; we denote the residue field by $\kappa(\fp) = \sO_S/\fp$.
We wish to prove a version of Proposition~\ref{dual-of-hypersurface} that holds for all finite places outside of some $S$ depending only on $H$.  We will do so by ``spreading out from the generic fiber'': we will consider varieties (or schemes) defined over $\Spec(\sO_S)$, and take the closure of $W$ in $\PP^n_{\sO_S}$.  If $\chr(k)=p > 0$ then $k$ is the function field of a smooth, projective, geometrically integral curve $C$ defined over a finite field $\F_q$, and $S$ may be identified with a finite set of (closed) points of $C$; in this case, $\Spec(\sO_S)$ is simply the variety $C \setminus S$.  In characteristic zero, we are forced to use some scheme theory, as $\Spec(\sO_S)$ is not a variety over a field; in both cases, our proofs are written in the language of  elementary commutative algebra.

\begin{proposition}\label{dual-spread-out}
  Let $k$ be a global field, let $n\geq 2$, and let $W\subset\PP^n_k$ be a smooth hypersurface defined by $H\in k[X_0,\ldots,X_n]$ of degree at least $2$.  Let $W^*\subset\PP^{n*}_k$ be the dual variety, and let $H^*\in k[Y_0,\ldots,Y_n]$ be a homogeneous polynomial defining $W^*$.  There exists a finite set $S$ of finite places of $k$, depending only on $H$ and $H^*$, such that the following hold.
  \begin{enumerate}
  \item The polynomials $H$ and $H^*$ have coefficients in $\sO_S$.
  \item For all finite places $\fp$ not in $S$, the polynomial $H\pmod\fp\in\kappa(\fp)[X_0,\ldots,X_n]$ is nonzero  (thus $\deg(H\mod\fp)=\deg(H)$ since $H$ is homogeneous), and the hypersurface $W_\fp\subset\PP^n_{\kappa(\fp)}$ defined by $H\pmod\fp$ is smooth.
  \item For all finite places $\fp$ not in $S$, the dual variety $(W_\fp)^*\subset\PP^{n*}_{\kappa(\fp)}$ is defined by $H^*\pmod\fp$.
  \end{enumerate}
\end{proposition}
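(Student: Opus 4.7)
The plan is to enlarge $S$ in four successive (finite) stages, spreading out the relevant algebraic identities from the generic fiber.

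\emph{Stage 1 (coefficients).} First, include in $S$ all finite places dividing denominators of the coefficients of $H$ or $H^*$, so $H, H^* \in \sO_S[X_0,\ldots,X_n]$; this gives~(1). Also exclude the finitely many $\fp$ dividing \emph{every} coefficient of $H$ or of $H^*$, so that the reductions $H\bmod\fp$ and $H^*\bmod\fp$ remain nonzero homogeneous polynomials of the same degrees.

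\emph{Stage 2 (smoothness).} Smoothness of $W$ says that $H, \partial H/\partial X_0, \ldots, \partial H/\partial X_n$ have no common zero in $\bar k^{n+1}\setminus\{0\}$. By Hilbert's Nullstellensatz there exist integers $N_i\geq 0$ and polynomials $a_{i,j}\in k[X_0,\ldots,X_n]$ with
\[
X_i^{N_i} = a_{i,0}H + \sum_{j=0}^{n} a_{i,j+1}\,\frac{\partial H}{\partial X_j}, \qquad 0\le i\le n.
\]
After enlarging $S$ to clear the denominators of the $a_{i,j}$, these identities reduce modulo any $\fp\notin S$, forcing the reductions of $H$ and its partials to have no nontrivial common zero, which proves~(2). (This is exactly the elementary argument used in the proof of Lemma~\ref{lemma_DR}.)

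\emph{Stage 3 (containment in duality).} Since $H^*$ lies in the kernel of the map $g_W\colon k[Y_0,\ldots,Y_n]\to k[X_0,\ldots,X_n]/(H)$ sending $Y_i\mapsto \partial H/\partial X_i$, there exists $Q\in k[X_0,\ldots,X_n]$ with
\[
H^*\!\left(\tfrac{\partial H}{\partial X_0},\ldots,\tfrac{\partial H}{\partial X_n}\right) = H\cdot Q.
\]
Enlarge $S$ to clear denominators of $Q$. Then this identity reduces modulo every $\fp\notin S$, so $H^*\bmod\fp$ vanishes identically on the image of the Gauss map $\cG_{W_\fp}$, giving $(W_\fp)^*\subseteq V(H^*\bmod\fp)$ as closed subsets of $\PP^{n*}_{\kappa(\fp)}$.

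\emph{Stage 4 (equality; the main obstacle).} To upgrade the containment to equality, apply Proposition~\ref{dual-of-hypersurface}(1) to the smooth hypersurface $W_\fp$ (smooth by Stage~2, not a hyperplane since $\deg H\ge 2$): its dual $(W_\fp)^*$ is a geometrically integral hypersurface of dimension $n-1$. Thus equality of the two subsets of $\PP^{n*}_{\kappa(\fp)}$ will follow once we know $V(H^*\bmod\fp)$ is also a geometrically integral hypersurface of dimension $n-1$, i.e., that $H^*\bmod\fp$ is absolutely irreducible of the same degree as $H^*$. The key point, and the main technical obstacle, is the spread-out of absolute irreducibility: since $H^*$ is absolutely irreducible over $k$ by Proposition~\ref{dual-of-hypersurface}(1), the closed subscheme $V(H^*)\subset\PP^n_{\sO_S}$ is flat over $\sO_S$ with geometrically integral generic fiber, and geometric integrality of the fibers of a flat projective morphism is an open condition on the base (EGA~IV$_3$, Th\'eor\`eme~12.2.4). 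Hence after further shrinking away from a finite set of places, every fiber of $V(H^*)\to\Spec\sO_S$ is geometrically integral, yielding~(3). A self-contained alternative avoiding EGA runs as follows: absolute irreducibility of $H^*$ is equivalent to the statement that a certain elimination-theoretic system (asserting the existence of a nontrivial factorization in $\bar k[X_0,\ldots,X_n]$ of degrees $\le \deg H^*$) has \emph{no} solution; by Hilbert's Nullstellensatz this again yields an algebraic identity, which we spread out by absorbing the finitely many denominators into $S$.
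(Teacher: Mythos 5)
Your proposal is correct and takes a genuinely different route through the key step, part~(3). Stages~1--2 match the paper's argument (the Nullstellensatz spread-out of smoothness in Stage~2 is exactly the elementary argument of Lemma~\ref{lemma_DR}, which the paper's proof also uses in slightly different form). For part~(3), the paper works with the graded $\sO_S$-algebra map $g\colon\sO_S[Y]\to\sO_S[X]/(H)$, and via Lemma~\ref{ht-1-prime} together with a contraction-of-primes fact from Atiyah--Macdonald, shows that the ideal $J=\ker(g)$ defining the spread-out dual $\overline W{}^*$ equals $(H^*)$ in $\sO_S[Y]$; it then asserts that the fiber of $\overline W{}^*$ over $\fp$ is $(W_\fp)^*$, i.e.\ that $\ker(g_\fp)=J\bmod\fp$. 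Your proposal instead establishes only the one-sided containment $(W_\fp)^*\subseteq V(H^*\bmod\fp)$ by reducing the single polynomial identity $H^*(\partial H/\partial X_0,\ldots,\partial H/\partial X_n)=HQ$, and then upgrades this to equality by a dimension-and-irreducibility count: both sides are $(n-1)$-dimensional closed subsets, $(W_\fp)^*$ is geometrically integral by Proposition~\ref{dual-of-hypersurface}(1) applied to the smooth $W_\fp$, and $V(H^*\bmod\fp)$ is geometrically integral once absolute irreducibility of $H^*$ has been spread out (Bertini--Noether, or EGA IV~12.2.4). This isolates and explicitly justifies the crucial spread-out step; in the paper that step is the terse assertion $\ker(g_\fp)=J\bmod\fp$, which is genuinely a ``scheme-theoretic image commutes with reduction away from finitely many places'' claim rather than something automatic (reduction modulo $\fp$ is not a flat base change), so your route is more explicit about where the finiteness of $S$ is being used. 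What the paper's route buys in exchange is that it stays within the ``elementary commutative algebra'' the appendix commits to (primitivity, UFDs, contracted ideals), whereas your route must either cite a nontrivial constructibility theorem or reconstruct it via Noether forms, a point your last sentence acknowledges but leaves somewhat sketchy.
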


\begin{proof}
  The first assertion is true once $S$ contains all places with respect to which some coefficient of $H$ or $H^*$ has negative valuation.  The reduction $H\pmod\fp$ is nonzero as long as $\fp$ is not one of the finite set of places whose valuation is strictly positive on all coefficients of $H$; we include such places in $S$ as well.  Finally, we enlarge $S$ to assume that $\sO_S$ is a unique factorization domain.  Note that $H$ is a \emph{primitive} polynomial over $\sO_S$ by construction: its coefficients have no common factors because we included those in $S$.  Similarly, by enlarging $S$ if necessary, we may assume that $H^*\pmod\fp$ is nonzero for all $\fp\notin S$, so that $H^*$ is primitive over $\sO_S$.

  Consider the closed subscheme $\overline W\subset\PP^n_{\sO_S}$ defined by $H$.    (If $\chr(k)=p>0$ then this is a subvariety of $\PP^n_{\sO_S} = \PP^n_{\F_q}\times(C\setminus S)$.)  Let $I$ be the (homogeneous) ideal of $\sO_S[X_0,\ldots,X_n]$ defined by $H$ and $\partial H/\partial X_0,\ldots,\partial H/\partial X_n$.  Since $W$ is smooth, the extended ideal $Ik[X_0,\ldots,X_n]$ (the ideal of $k[X_0,\ldots,X_n]$ generated by the image of $I$) contains $(X_0^m,\ldots,X_n^m)$ for some $m>0$.  This means that each $X_i^m$ is a linear combination of $H$ and the $\partial H/\partial X_j$ with coefficients in $k$.  Enlarging $S$ to contain all places with negative valuation on some coefficient of one of these linear combinations, we may assume $(X_0^m,\ldots,X_n^m)\subset I$.  Then for $\fp\notin S$ we have $(X_0^m,\ldots,X_n^m)\subset (H,\partial H/\partial X_0,\ldots,\partial H/\partial X_n)\pmod\fp$, so $W_\fp$ is smooth.

  (Geometrically, the generic fiber of $\overline W\to\Spec(\sO_S)$ is the hypersurface $W$, and the fiber over a place $\fp\in\Spec(\sO_S)$ is $W_\fp$.  Lemma~\ref{ht-1-prime} below shows that $\overline W$ is the closure of $W$ in $\PP^n_{\sO_S}$.  The singular locus of $\overline W$ is a closed subscheme not intersecting the generic fiber of $\overline W\to\Spec(\sO_S)$, so its image in $\Spec(\sO_S)$ is a finite set of closed points.  Deleting these points allows us to assume $\overline W\to\Spec(\sO_S)$ is smooth.)

  Now consider the morphism (regular map) $\cG\colon \overline W\to\PP^n_{\sO_S}$ defined by the homogeneous polynomials $[\partial H/\partial X_0:\ldots:\partial H/\partial X_n]$.  As before, this is well-defined because $H$ and its partial derivatives have no common zeros.  Let $\overline W{}^*$ denote the image of $\cG$.  Algebraically, the morphism $\cG$ corresponds to the $\sO_S$-algebra map $g\colon\sO_S[Y_0,\ldots,Y_n]\to\sO_S[X_0,\ldots,X_n]/(H)$ sending $Y_i$ to $\partial H/\partial X_i$, and $\overline W{}^*$ is defined by $J = \ker(g)$.  The Gauss map $\cG_W$ corresponds to $g_W = g\otimes_{\sO_S} k\colon k[Y_0,\ldots,Y_n]\to k[X_0,\ldots,X_n]/(H)$, and $\cG_{W_\fp}$ is defined by $g_\fp = g\pmod\fp\colon k(\fp)[Y_0,\ldots,Y_n]\to k(\fp)[X_0,\ldots,X_n]/(H\pmod\fp)$ for $\fp\notin S$.  Hence the dual $W^*$ is defined by the ideal $\ker(g_W) = J k[Y_0,\ldots,Y_n]$, and the dual $(W_\fp)^*$ is defined by $\ker(g_\fp) = J\pmod\fp$.  But $\ker(g_W)$ is generated by $H^*$, and we are assuming $H^*$ to be primitive, so $\ker(g_W)\cap\sO_S[Y_0,\ldots,Y_n] = (H^*)$ by Lemma~\ref{ht-1-prime}.  On the other hand, we have $\ker(g_W)\cap\sO_S[Y_0,\ldots,Y_n] = J$ by~\cite[Proposition~3.11(iv)]{AtiMac69}, since $J$ is prime and $\ker(g_W) = Jk[Y_0,\ldots,Y_n]$ is not the unit ideal.  Hence $(W_\fp)^*$ is defined by $J\pmod\fp = (H^*\pmod\fp)$, as desired.

  (Geometrically, the restriction of $\cG$ to the generic fiber of $\overline W\to\Spec(\sO_S)$ is the Gauss map $\cG_W$, and the restriction to the fiber over $\fp$ is $\cG_{W_\fp}$.  Since $H^*$ is irreducible, it defines an integral hypersurface $X$ in $\PP^n_{\sO_S}$, which is thus the closure of its generic fiber.  But $\overline W{}^*$ is also irreducible, and $\overline W{}^*$ and $X$ both have generic fiber $W$.)
\end{proof}

We used the following lemma in the above proof.

\begin{lemma}\label{ht-1-prime}
  Let $R$ be a unique factorization domain with fraction field $k$, let $H\in R[X_1,\ldots,X_n]$ be a primitive polynomial of positive degree, let $I$ be the ideal of $R[X_1,\ldots,X_n]$ generated by $H$, and let $J$ be the ideal of $k[X_0,\ldots,X_n]$ generated by $H$.  Then $J\cap R[X_1,\ldots,X_n] = I$.
\end{lemma}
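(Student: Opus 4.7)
The plan is to deduce this from Gauss's lemma for the unique factorization domain $R[X_1,\ldots,X_n]$, exploiting the hypothesis that $H$ is primitive. The inclusion $I \subseteq J \cap R[X_1,\ldots,X_n]$ is immediate from the definitions, so the content of the argument lies in establishing the reverse inclusion.

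For the reverse inclusion, I would start by taking $f \in J \cap R[X_1,\ldots,X_n]$, so that $f = Hg$ for some $g \in k[X_1,\ldots,X_n]$, and then show that $g$ necessarily lies in $R[X_1,\ldots,X_n]$. To set this up, I would recall the content function: for a nonzero polynomial $p \in R[X_1,\ldots,X_n]$, its content $c(p) \in R$ is defined as the $\gcd$ (well-defined up to units since $R$ is a UFD) of the coefficients of $p$, and $p$ is primitive precisely when $c(p)$ is a unit of $R$. The key input is the multiplicative form of Gauss's lemma, namely $c(pq) = c(p)c(q)$ up to units of $R$, which may be proved by induction on $n$ from the classical one-variable Gauss's lemma applied to $R[X_1,\ldots,X_{n-1}][X_n]$; equivalently, one can use that $R[X_1,\ldots,X_n]$ is a UFD and that the product of primitive polynomials is primitive.

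With this machinery in place, the computation is short. Write $g = (a/b)\,g_0$ with $a,b \in R$ coprime, $b \neq 0$, and $g_0 \in R[X_1,\ldots,X_n]$ primitive (obtained from $g$ by clearing denominators and pulling out the content). The relation $f = Hg$ then becomes
\[
b\,f \;=\; a\,H\,g_0
\]
inside $R[X_1,\ldots,X_n]$. Taking contents of both sides, $c(bf) = b\,c(f)$ up to units, whereas $c(aHg_0) = a\,c(H)\,c(g_0)$ equals $a$ up to units, since both $H$ and $g_0$ are primitive. Hence $b\,c(f)$ and $a$ are associates in $R$, so $b \mid a$ up to a unit; combined with $\gcd(a,b) = 1$, this forces $b$ to be a unit of $R$. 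Therefore $g = (a/b)g_0 \in R[X_1,\ldots,X_n]$, which gives $f = Hg \in I$ as desired.

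There is no genuine obstacle here beyond careful bookkeeping with the content function; the result is essentially a packaging of Gauss's lemma. The only mild subtlety is checking that the multiplicative content formula indeed holds in several variables over a UFD, which is a standard induction on $n$ and worth a single sentence of justification.
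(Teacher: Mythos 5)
Your proof is correct, and it takes a genuinely different route from the paper's. The paper's argument is via localization: it observes that $k[X_1,\ldots,X_n]$ is the localization of $R[X_1,\ldots,X_n]$ at $R\setminus\{0\}$, asserts that $I=(H)$ is a prime ideal of $R[X_1,\ldots,X_n]$ not meeting $R\setminus\{0\}$, and then invokes the prime-ideal correspondence under localization (Atiyah--Macdonald, Prop.\ 3.11(iv)) to conclude that the contraction of the extension $J$ is again $I$. Your proof instead works directly with the content function and the multiplicative form of Gauss's lemma, showing that the ``denominator'' $b$ in $g=(a/b)g_0$ must be a unit. Each approach has something to recommend it: the paper's is very short once the localization machinery is granted, while yours is self-contained and elementary.

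There is one point in your favor worth flagging. The paper's assertion ``since $H$ is primitive, it is a prime element of $R[X_1,\ldots,X_n]$'' is not correct under the stated hypotheses: a primitive polynomial of positive degree need not be irreducible (e.g.\ $X^2-1$ over $\Z$), and only primitive \emph{irreducible} polynomials of positive degree are prime in $R[X_1,\ldots,X_n]$. In the paper's applications the relevant $H$ is always irreducible, so no harm is done there, but the lemma as stated is more general, and the cited Prop.\ 3.11(iv) genuinely requires $I$ to be prime. Your content argument sidesteps this entirely and establishes the lemma at the stated level of generality. (One could also repair the paper's proof by switching to Prop.\ 3.11(ii) and showing $(I:s)=I$ for $s\in R\setminus\{0\}$ via the same content computation, but that essentially reproduces your argument.) The only minor omission in your writeup is the degenerate case $f=0$, which is trivially in $I$ and is worth a clause so that $g\neq 0$ and the decomposition $g=(a/b)g_0$ with $g_0$ primitive makes sense.
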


\begin{proof}
  Since $H$ is primitive, it is a prime element of $R[X_1,\ldots,X_n]$, so $I$ is prime.  Since $H$ has positive degree, the ideal $J$ is not the unit ideal.  Now use~\cite[Proposition~3.11(iv)]{AtiMac69}.
\end{proof}

Joseph Rabinoff:
Department of Mathematics,  Duke University,
120 Science Drive, Durham, North Carolina 27708, USA.
\emph{Email:}  \url{jdr@math.duke.edu}


\bigskip
\bigskip
\bigskip


\bibliographystyle{amsalpha}

\bibliography{NoThBibliography}

\providecommand{\bysame}{\leavevmode\hbox to3em{\hrulefill}\thinspace}
\providecommand{\MR}{\relax\ifhmode\unskip\space\fi MR }
\providecommand{\MRhref}[2]{%
  \href{http://www.ams.org/mathscinet-getitem?mr=#1}{#2}
}
\providecommand{\href}[2]{#2}
\begin{thebibliography}{HBP12}

\bibitem[AM16]{AtiMac69}
M.~F. Atiyah and I.~G. Macdonald, \emph{Introduction to commutative algebra},
  economy ed., Addison-Wesley Series in Mathematics, Westview Press, Boulder,
  CO, 2016, For the 1969 original see [ MR0242802]. \MR{3525784}

\bibitem[Bon21]{Bon21}
D.~Bonolis, \emph{A polynomial sieve and sums of {D}eligne type}, Int. Math.
  Res. Not. IMRN (2021), no.~2, 1096--1137. \MR{4201962}

\bibitem[Bra15]{Bra15}
J.~Brandes, \emph{Sums and differences of power-free numbers}, Acta Arith.
  \textbf{169} (2015), no.~2, 169--180. \MR{3359952}

\bibitem[Bro09]{Bro03}
N.~Broberg, \emph{Rational points on finite covers of ${\PP}^1$ and ${\PP}^2$},
  J. Number Theory \textbf{101} (2009), 195--207. \MR{1979659}

\bibitem[Bro15]{Bro15}
T.~D. Browning, \emph{The polynomial sieve and equal sums of like polynomials},
  Int. Math. Res. Not. IMRN (2015), no.~7, 1987--2019. \MR{3335239}

\bibitem[BV15]{BrVi15}
T.~D. Browning and P.~Vishe, \emph{Rational points on cubic hypersurfaces over
  $\mathbb{F}_q(t)$}, Geom. Funct. Anal. \textbf{25} (2015), no.~3, 671--732.
  \MR{3361770}

\bibitem[CD08]{CojDav08}
A.~C. Cojocaru and C.~David, \emph{Frobenius fields for {D}rinfeld modules of
  rank 2}, Compos. Math. \textbf{144} (2008), no.~4, 827--848. \MR{2441247}

\bibitem[Coh81]{Coh81}
S.~D. Cohen, \emph{The distribution of {G}alois groups and {H}ilbert's
  irreducibility theorem}, Proc. London Math. Soc. (3) \textbf{43} (1981),
  no.~2, 227--250. \MR{628276}

\bibitem[Del74]{Del74}
P.~Deligne, \emph{La conjecture de {W}eil {I}}, Inst. Hautes \'{E}tudes Sc.
  Publ. Math. No. \textbf{43} (1974), 273--307. \MR{340258}

\bibitem[Har77]{Ha77}
R.~Hartshorne, \emph{Algebraic geometry}, Springer-Verlag, New York-Heidelberg,
  1977, Graduate Texts in Mathematics, No. 52. \MR{0463157}

\bibitem[HB84]{HB84}
D.~R. Heath-Brown, \emph{The square sieve and consecutive square-free numbers},
  Math. Ann. \textbf{266} (1984), 251--259. \MR{730168}

\bibitem[HB02]{HB02}
\bysame, \emph{The density of rational points on curves and surfaces}, Ann. of
  Math. \textbf{155} (2002), 553--595. \MR{1906595}

\bibitem[HBP12]{HBPie12}
D.~R. Heath-Brown and L.~B. Pierce, \emph{Counting rational points on smooth
  cyclic covers}, J. Number Theory \textbf{132} (2012), no.~8, 1741--1757.
  \MR{2922342}

\bibitem[Hoo78]{Hoo78}
C.~Hooley, \emph{On the representations of a number as the sum of four cubes.
  {I}}, Proc. London Math. Soc. (3) \textbf{36} (1978), no.~1, 117--140.
  \MR{506025}

\bibitem[Hsu96]{Hsu96}
Chih-Nung Hsu, \emph{A large sieve inequality for rational function fields}, J.
  Number Theory \textbf{58} (1996), no.~2, 267--287. \MR{1393616}

\bibitem[IK04]{IK04}
H.~Iwaniec and E.~Kowalski, \emph{Analytic {N}umber {T}heory}, vol.~53, Amer.
  Math. Soc. Colloquium Publications, Providence RI, 2004. \MR{2061214}

\bibitem[IR90]{IreRos90}
K.~Ireland and M.~Rosen, \emph{A classical introduction to modern number
  theory}, second ed., Graduate Texts in Mathematics, vol.~84, Springer-Verlag,
  New York, 1990. \MR{1070716}

\bibitem[Jan96]{Janusz}
Gerald~J. Janusz, \emph{Algebraic number fields}, second ed., Graduate Studies
  in Mathematics, vol.~7, American Mathematical Society, Providence, RI, 1996.
  \MR{1362545}

\bibitem[Kat02]{Kat02}
N.~M. Katz, \emph{Estimates for nonsingular multiplicative character sums},
  Int. Math. Res. Not. \textbf{2002} (2002), 333--349. \MR{1883179}

\bibitem[Kat07]{Kat07}
\bysame, \emph{Estimates for nonsingular mixed character sums}, Int. Math. Res.
  Not. \textbf{2007} (2007), 19 pp. \MR{2359542}

\bibitem[Kle86]{Kle86}
S.~L. Kleiman, \emph{Tangency and duality}, Proceedings of the 1984 {V}ancouver
  conference in algebraic geometry, CMS Conf. Proc., vol.~6, Amer. Math. Soc.,
  Providence, RI, 1986, pp.~163--225. \MR{846021}

\bibitem[LW54]{LW54}
Serge Lang and Andr\'{e} Weil, \emph{Number of points of varieties in finite
  fields}, Amer. J. Math. \textbf{76} (1954), 819--827. \MR{65218}

\bibitem[Mun09]{Mun09}
R.~Munshi, \emph{Density of rational points on cyclic covers of ${\PP}^n$},
  Journal de Th\'{e}orie des Nombres de Bordeaux \textbf{21} (2009), 335--341.
  \MR{2541429}

\bibitem[Pie06]{Pie06}
L.~B. Pierce, \emph{A bound for the 3-part of class numbers of quadratic fields
  by means of the square sieve}, Forum Math. \textbf{18} (2006), 677--698.
  \MR{2254390}

\bibitem[RL05]{Roj05}
A.~Rojas-Le\'{o}n, \emph{Estimates for singular multiplicative character sums},
  Int. Math. Res. Not. (2005), no.~20, 1221--1234. \MR{2144086}

\bibitem[Ros02]{Ro02}
M.~Rosen, \emph{Number theory in function fields}, Graduate Texts in
  Mathematics, vol. 210, Springer-Verlag, New York, 2002. \MR{1876657}

\bibitem[Ser97]{Ser97}
J.-P. Serre, \emph{Lectures on the {M}ordell-{W}eil theorem}, third ed.,
  Aspects of Mathematics, Friedr. Vieweg \& Sohn, Braunschweig, 1997,
  Translated from the French and edited by Martin Brown from notes by Michel
  Waldschmidt, With a foreword by Brown and Serre. \MR{1757192}

\end{thebibliography}

\end{document}